\documentclass[a4paper,10pt]{amsart}
\usepackage[utf8]{inputenc}
\usepackage{amsmath, amssymb, amsthm, graphicx, wrapfig, wasysym, fullpage}

\newtheorem{Prop}{Proposition}[section]
\newtheorem{Thm}[Prop]{Theorem}
\newtheorem{Lemma}[Prop]{Lemma}
\newtheorem{Cor}[Prop]{Corollary}
\newtheorem{Def}[Prop]{Definition}
\theoremstyle{remark}

\numberwithin{equation}{section}

\title{Combinatorics of hexagonal fully packed loop configurations}
\author{Sabine Beil}
\address{Sabine Beil, Universit\"at Wien, Fakult\"at f\"ur Mathematik, Oskar-Morgenstern-Platz~1, 1090 Wien, Austria}
\email{sabine.beil@univie.ac.at}
\thanks{Supported by the Austrian Science Foundation FWF, START grant Y463.}

\begin{document}

\begin{abstract}
In this article, fully packed loop configurations of hexagonal shape (HFPLs) are defined. They generalize triangular fully packed loop configurations. To encode the boundary conditions of an HFPL, a sextuple 
$(\mathsf{l}_\mathsf{T},\mathsf{t},\mathsf{r}_\mathsf{T};\mathsf{r}_\mathsf{B},\mathsf{b},\mathsf{l}_\mathsf{B})$ of $01$-words is assigned to it.
In the first main result of this article, necessary conditions for the boundary $(\mathsf{l}_\mathsf{T},\mathsf{t},\mathsf{r}_\mathsf{T};\mathsf{r}_\mathsf{B},\mathsf{b},\mathsf{l}_\mathsf{B})$ of an HFPL
are stated. For instance, 
the inequality $d(\mathsf{r}_\mathsf{B})+d(\mathsf{b})+d(\mathsf{l}_\mathsf{B})\geq d(\mathsf{l}_\mathsf{T})+d(\mathsf{t})+d(\mathsf{r}_\mathsf{T})+\vert\mathsf{l}_\mathsf{T}\vert_1\vert\mathsf{t}\vert_0+\vert\mathsf{t}\vert_1
\vert\mathsf{r}_\mathsf{T}\vert_0+\vert\mathsf{r}_\mathsf{B}\vert_0\vert\mathsf{l}_\mathsf{B}\vert_1$ has to be fulfilled, where 
$\vert\cdot\vert_i$ denotes the number of occurrences of $i$ for $i=0,1$ and $d(\cdot)$ denotes the number of 
inversions. 
The other main contribution of this article is the enumeration of HFPLs with boundary $(\mathsf{l}_\mathsf{T},\mathsf{t},\mathsf{r}_\mathsf{T};\mathsf{r}_\mathsf{B},\mathsf{b},\mathsf{l}_\mathsf{B})$ such that 
$d(\mathsf{r}_\mathsf{B})+d(\mathsf{b})+d(\mathsf{l}_\mathsf{B})-d(\mathsf{l}_\mathsf{T})-d(\mathsf{t})-d(\mathsf{r}_\mathsf{T})-\vert\mathsf{l}_\mathsf{T}\vert_1\vert\mathsf{t}\vert_0-
\vert\mathsf{t}\vert_1\vert\mathsf{r}_\mathsf{T}\vert_0-\vert\mathsf{r}_\mathsf{B}\vert_0\vert\mathsf{l}_\mathsf{B}\vert_1=0,1$. To be more precise, in the first case they are enumerated by
Littlewood-Richardson coefficients and in the second case their number is expressed in terms of Littlewood-Richardson coefficients. 
\end{abstract}

\maketitle

\section*{Introduction}
Fully packed loop configurations (FPLs) came up in statistical mechanics. Later, it turned out that they are in bijection to alternating sign matrices. Thus, FPLs are enumerated by the famous formula
for alternating sign matrices in \cite{Zeilberger}.
In a natural way, every FPL defines a (non-crossing) matching of the occupied external edges -- the so-called \textit{link pattern} $\pi$ --  by matching those which are joined by a path. Crucial in the development
of triangular fully packed loop configurations (TFPLs) are FPLs corresponding to a link pattern with a large number of nested arches: they admit a combinatorial decomposition in which TFPLs naturally arise. The previous 
came up in the course of the proof in \cite{CKLN} of a conjecture in \cite{Zuber}. It states that the number of FPLs corresponding to a given link pattern with $m$ nested arches is a polynomial in $m$.
In the course of the study of FPLs corresponding to link patterns with a large number of nested arches, some first combinatorics of TFPLs were derived, see \cite{Nadeau1} and \cite{Thapper}. For example, necessary 
conditions for the boundary $(u,v;w)$ -- a triple of $01$-words -- of a TFPL came up in \cite{CKLN}, \cite{TFPL} and \cite{Nadeau1}. One of these conditions states that $d(w)\geq d(u)+d(v)$ where  
$d(\cdot)$ denotes the number of inversions of a $01$-word. It was given in \cite{Thapper} in the Dyck word case. 
Later, another proof in connection with TFPLs together with an orientation of the edges was given in~\cite{TFPL}.
Moreover, for oriented TFPLs an interpretation of the difference $d(w)-d(u)-d(v)$ in terms of occurrences of local configurations is proven in \cite{TFPL}. 
This point of view turned out fruitful: under the constraint that $d(w)-d(u)-d(v)=0$, oriented TFPLs with boundary $(u,v;w)$
are enumerated by the Littlewood-Richardson coefficient $c_{\lambda(u),\lambda(v)}^{\lambda(w)}$ where $\lambda(\cdot)$ denotes the Young diagram corresponding to a $01$-word. First, this was only shown for
Dyck words $u,v,w$ in \cite{Nadeau2}. Later, it was extended to all ordinary and to oriented TFPLs with boundary $(u,v;w)$ in \cite{TFPL}. More precisely, a bijection between oriented TFPLs with boundary $(u,v;w)$ and
Knutson-Tao puzzles with boundary $(u,v;w)$ was constructed. Finally, the number of ordinary as well as of oriented TFPLs with boundary $(u,v;w)$ such that $d(w)-d(u)-d(v)=1$ was expressed in terms of 
Littlewood-Richardson coefficients in \cite{TFPL}.\\

The goal of this article is to generalize the results for triangular fully packed loop configurations (TFPLs) to fully packed loop configurations of hexagonal shape (HFPLs). In Section~\ref{Sec:HFPLs}, 
HFPLs of size $(K,L,M,N)$ are defined. 
Examples of a TFPL and of an HFPL are given in Figure~\ref{Fig:Example_TFPL_HFPL}. Indeed, HFPLs generalize TFPLs: TFPLs of size $n$ appear as subsets of HFPLs when considered HFPLs of size $(n,0,n,0)$.
\begin{figure}[tbh]
\includegraphics[width=.8\textwidth]{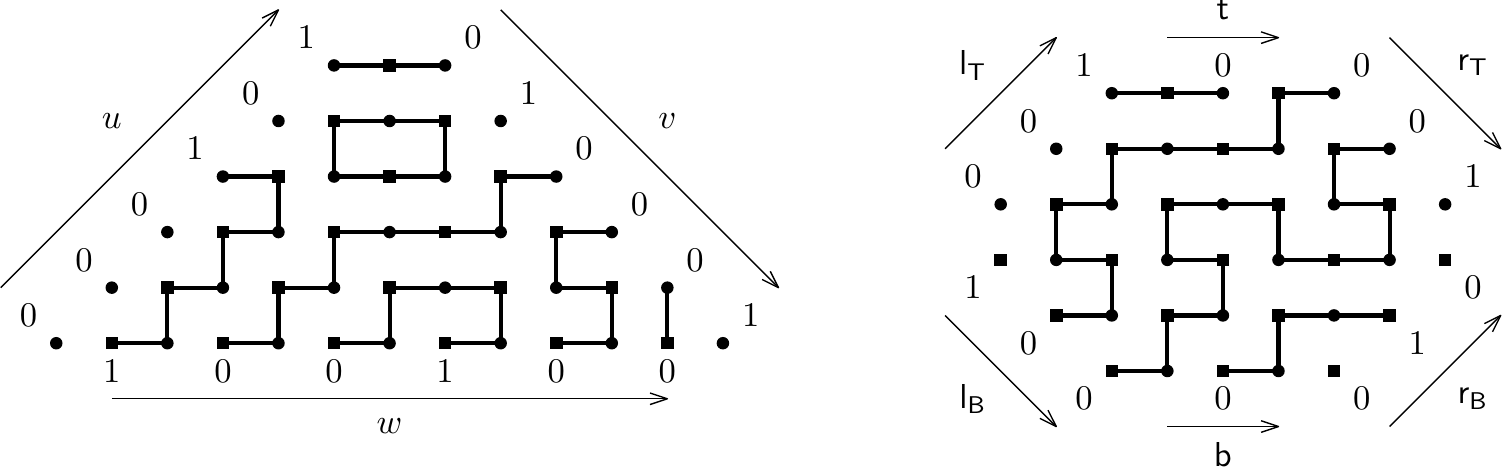}
\caption{Examples of a TFPL of size $6$ (left) and an HFPL of size $(3,1;3,3)$ (right). The boundary of the TFPL is $(000101,010001;100100)$ and the boundary of the HFPL is $(001,0,001;010,0,100)$.}
\label{Fig:Example_TFPL_HFPL}
\end{figure}
Furthermore, to each HFPL a sextuple $(\mathsf{l}_\mathsf{T},\mathsf{t},\mathsf{r}_\mathsf{T};\mathsf{r}_\mathsf{B},\mathsf{b},\mathsf{l}_\mathsf{B})$
of $01$-words of lengths $(K,L,M;N,K+L-N,M+N-K)$ respectively, that encodes the boundary conditions, is assigned. 
For instance, the boundary of the HFPL depicted in Figure~\ref{Fig:Example_TFPL_HFPL} is $(001,0,001;010,0,100)$.
The boundary of an HFPL generalizes the boundary of a TFPL in the following way: a TFPL with boundary $(u,v;w)$, when considered an HFPL of size $(n,0,n,0)$, has boundary 
$(u,\varepsilon,v;\varepsilon,w,\varepsilon)$, where $\varepsilon$ denotes the empty word.\\

The existence of an HFPL with boundary
$(\mathsf{l}_\mathsf{T},\mathsf{t},\mathsf{r}_\mathsf{T};\mathsf{r}_\mathsf{B},\mathsf{b},\mathsf{l}_\mathsf{B})$
implies the following constraints on $\mathsf{l}_\mathsf{T},\mathsf{t},\mathsf{r}_\mathsf{T},\mathsf{r}_\mathsf{B},\mathsf{b},\mathsf{l}_\mathsf{B}$:

\begin{enumerate}
 \item $\vert\mathsf{l}_\mathsf{T}\vert_0+\vert\mathsf{t}\vert_0=\vert\mathsf{r}_\mathsf{B}\vert_0+\vert\mathsf{b}\vert_0$;
 \item $\vert\mathsf{t}\vert_1+\vert\mathsf{r}_\mathsf{T}\vert_1=\vert\mathsf{b}\vert_1+\vert\mathsf{l}_\mathsf{B}\vert_1$;
 \item $\mathsf{l}_\mathsf{T}\,\mathsf{t}\leq\mathsf{b}\,\mathsf{r}_\mathsf{B}$ and $\mathsf{t}\,\mathsf{r}_\mathsf{T}\leq\mathsf{l}_\mathsf{B}\,\mathsf{b}$ for the concatenations $\mathsf{l}_\mathsf{T}\,\mathsf{t}$,
 $\mathsf{b}\,\mathsf{r}_\mathsf{B}$, $\mathsf{t}\,\mathsf{r}_\mathsf{T}$ and $\mathsf{l}_\mathsf{B}\,\mathsf{b}$;
 \item $d(\mathsf{r}_\mathsf{B})+d(\mathsf{b})+d(\mathsf{l}_\mathsf{B})\geq d(\mathsf{l}_\mathsf{T})+d(\mathsf{t})+d(\mathsf{r}_\mathsf{T})+\vert\mathsf{l}_\mathsf{T}\vert_1\vert\mathsf{t}\vert_0+
\vert\mathsf{t}\vert_1\vert\mathsf{r}_\mathsf{T}\vert_0+\vert\mathsf{r}_\mathsf{B}\vert_0\vert\mathsf{l}_\mathsf{B}\vert_1$.
\end{enumerate}
Here, $\vert\omega\vert_i$ denotes the number of occurrences of $i$ in a $01$-word $\omega$ for $i=0,1$ and $d(\omega)$ denotes the number of inversions in $\omega$, i.e., of pairs $k<\ell$ such that 
$\omega_k>\omega_\ell$. The previous constraints are the content of Theorem~\ref{Thm:NecCondHFPL}. They generalize the following constraints on the boundary $(u,v;w)$ of a TFPL: 
(1) $\vert u\vert_0=\vert v\vert_0=\vert w\vert_0$; (2) $u\leq w$ and $v\leq w$; (3) $d(w)\geq d(u)+d(v)$.  \\

The crucial idea for the proofs in this article is to add an orientation to each edge of an HFPL. This is done by generalizing the way an orientation is added to each edge of a TFPL in \cite{TFPL}.
The interplay between ordinary and oriented HFPLs is content of Section~\ref{Sec:Recovering_HFPLs_from_oriented_HFPLs}. It will be shown that the set of ordinary HFPLs with boundary 
$(\mathsf{l}_\mathsf{T},\mathsf{t},\mathsf{r}_\mathsf{T},\mathsf{r}_\mathsf{B},\mathsf{b},\mathsf{l}_\mathsf{B})$ can be regarded as a subset of the 
set of oriented HFPLs with boundary $(\mathsf{l}_\mathsf{T},\mathsf{t},\mathsf{r}_\mathsf{T},\mathsf{r}_\mathsf{B},\mathsf{b},\mathsf{l}_\mathsf{B})$. In particular, the existence of an HFPL with
boundary $(\mathsf{l}_\mathsf{T},\mathsf{t},\mathsf{r}_\mathsf{T},\mathsf{r}_\mathsf{B},\mathsf{b},\mathsf{l}_\mathsf{B})$ implies the existence of an oriented HFPL with boundary
$(\mathsf{l}_\mathsf{T},\mathsf{t},\mathsf{r}_\mathsf{T},\mathsf{r}_\mathsf{B},\mathsf{b},\mathsf{l}_\mathsf{B})$.
On the other hand, a weighted enumeration of oriented HFPLs is introduced, from which the number of ordinary HFPLs can 
be derived. The latter is specified in Corollary~\ref{Cor:WeightedEnumeration_HFPL}. This weighted enumeration of oriented HFPLs generalizes the weighted enumeration of oriented TFPLs in \cite{TFPL}.\\

In Section~\ref{Sec:Combinatorial_Interpretation_HFPL}, an interpretation of the integer 
\begin{equation}
exc(\mathsf{l}_\mathsf{T},\mathsf{t},\mathsf{r}_\mathsf{T};\mathsf{r}_\mathsf{B},\mathsf{b},\mathsf{l}_\mathsf{B})=
d(\mathsf{r}_\mathsf{B})+d(\mathsf{b})+d(\mathsf{l}_\mathsf{B})-d(\mathsf{l}_\mathsf{T})-d(\mathsf{t})-d(\mathsf{r}_\mathsf{T})-\vert\mathsf{l}_\mathsf{T}\vert_1\vert\mathsf{t}\vert_0-
\vert\mathsf{t}\vert_1\vert\mathsf{r}_\mathsf{T}\vert_0-\vert\mathsf{r}_\mathsf{B}\vert_0\vert\mathsf{l}_\mathsf{B}\vert_1
\notag\end{equation}
in terms of numbers of occurrences of certain local configurations in an oriented HFPL with boundary 
$(\mathsf{l}_\mathsf{T},\mathsf{t},\mathsf{r}_\mathsf{T},\mathsf{r}_\mathsf{B},\mathsf{b},\mathsf{l}_\mathsf{B})$ is proven.
The thereby counted local configurations resemble the local configurations that are counted by $d(w)-d(u)-d(v)$ in an oriented TFPLs with 
boundary $(u,v;w)$, see \cite{TFPL}. 
From the interpretation of $exc(\mathsf{l}_\mathsf{T},\mathsf{t},\mathsf{r}_\mathsf{T};\mathsf{r}_\mathsf{B},\mathsf{b},\mathsf{l}_\mathsf{B})$ in terms of numbers of occurrences of local configurations in an 
oriented HFPL with boundary $(\mathsf{l}_\mathsf{T},\mathsf{t},\mathsf{r}_\mathsf{T};\mathsf{r}_\mathsf{B},\mathsf{b},\mathsf{l}_\mathsf{B})$, the last of the stated constraints on the
boundary of an HFPL follows immediately. To give nice proofs of that interpretation and also of the other 
constraints on the boundary of an HFPL, oriented HFPLs are encoded by path-tangles. 
Path-tangles and the bijection between them and oriented HFPLs are treated in Section~\ref{Subsec:PathTanglesHFPL}. \\

In the last section, HFPLs with boundary $(\mathsf{l}_\mathsf{T},\mathsf{t},\mathsf{r}_\mathsf{T},\mathsf{r}_\mathsf{B},\mathsf{b},\mathsf{l}_\mathsf{B})$ where 
$exc(\mathsf{l}_\mathsf{T},\mathsf{t},\mathsf{r}_\mathsf{T};\mathsf{r}_\mathsf{B},\mathsf{b},\mathsf{l}_\mathsf{B})\in\{0,1\}$ are considered. In the case when 
$exc(\mathsf{l}_\mathsf{T},\mathsf{t},\mathsf{r}_\mathsf{T};\mathsf{r}_\mathsf{B},\mathsf{b},\mathsf{l}_\mathsf{B})=0$ both ordinary and oriented HFPLs with boundary 
$(\mathsf{l}_\mathsf{T},\mathsf{t},\mathsf{r}_\mathsf{T};\mathsf{r}_\mathsf{B},\mathsf{b},\mathsf{l}_\mathsf{B})$ are enumerated by the Littlewood-Richardson coefficient
\begin{equation}
c_{\lambda(\textbf{0}_{\vert\mathsf{l}_\mathsf{B}\vert_0}\,\textbf{1}_{\vert\mathsf{l}_\mathsf{B}\vert_1}\,\mathsf{l}_\mathsf{T}\,\mathsf{t}),\lambda(\textbf{0}_{\vert\mathsf{t}\vert_0}\,
\textbf{1}_{\vert\mathsf{t}\vert_1}\,\mathsf{r}_\mathsf{T}\,\textbf{0}_{\vert\mathsf{r}_\mathsf{B}\vert_0}\,\textbf{1}_{\vert\mathsf{r}_\mathsf{B}\vert_1})}^{\lambda(\mathsf{l}_\mathsf{B}\,\mathsf{b}\,
\mathsf{r}_\mathsf{B})}.
\label{Introduction:LR_coefficient}
\end{equation}
Here, $\textbf{0}_n$ (respectively $\textbf{1}_n$) denotes the word of length $n$ made up solely of zeroes (respectively of ones). 
The enumeration of both ordinary and oriented HFPLs with boundary $(\mathsf{l}_\mathsf{T},\mathsf{t},\mathsf{r}_\mathsf{T};\mathsf{r}_\mathsf{B},\mathsf{b},\mathsf{l}_\mathsf{B})$, where 
exc$(\mathsf{l}_\mathsf{T},\mathsf{t},\mathsf{r}_\mathsf{T};\mathsf{r}_\mathsf{B},\mathsf{b},\mathsf{l}_\mathsf{B})=0$, by the Littlewood-Richardson coefficient in (\ref{Introduction:LR_coefficient}) generalizes the 
enumeration of both ordinary and oriented TFPLs with boundary $(u,v;w)$, where $d(w)=d(u)+d(v)$, by the 
Littlewood-Richardson coefficient $c_{\lambda(u),\lambda(v)}^{\lambda(w)}$.  
Finally, the number of HFPLs with boundary 
$(\mathsf{l}_\mathsf{T},\mathsf{t},\mathsf{r}_\mathsf{T};\mathsf{r}_\mathsf{B},\mathsf{b},\mathsf{l}_\mathsf{B})$ where 
$exc(\mathsf{l}_\mathsf{T},\mathsf{t},\mathsf{r}_\mathsf{T};\mathsf{r}_\mathsf{B},\mathsf{b},\mathsf{l}_\mathsf{B})=1$ is expressed in terms of Littlewood-Richardson coefficients. 
This expression generalizes the expression of the number of TFPLs with boundary $(u,v;w)$ where $d(w)-d(u)-d(v)=1$ in terms of Littlewood-Richardson coefficients given in \cite{TFPL}.

\section{Preliminaries}
\subsection{Words}

In connection with hexagonal fully packed loop configurations, words play an important role. 
When it is spoken of a \textit{word} $\omega$ of length $n$ it is referred to a finite sequence $\omega=\omega_1\omega_2\cdots \omega_n$ where $\omega_i\in \{0,1\}$ for all 
$1\leq i\leq n$. Given a word $\omega$ the number of occurrences of 0 (resp. 1) in $\omega$ is denoted by $\vert \omega\vert_0$ (resp. $\vert \omega\vert_1$). 
Furthermore, it is said that two words $\omega, \sigma$ of length $n$ with the same number of occurrences of ones satisfy $\omega\leq\sigma$ if 
$\vert\omega_1\cdots\omega_m\vert_1\leq\vert\sigma_1\cdots\sigma_m\vert_1$ holds for all $1\leq m\leq n$. Finally, the number of inversions of $\omega$, i.e., pairs
$1\leq i<j\leq n$ that satisfy $\omega_i=1$ and $\omega_j=0$, is denoted by $d(\omega)$.\\

It is a well known fact that words are in bijection with Young diagrams. Throughout this article, the following bijection $\lambda$ from the set of words to the set of Young diagrams is chosen:
to a given word $\omega$ a path on the square lattice is constructed by drawing a $(0,1)$-step if $\omega_i=0$ and a $(1,0)$-step if $\omega_i=1$ for i from $1$ to $n$. Additionally, 
 a horizontal line up the path's starting point and vertical line to the left of its ending point are drawn. The resulting region then encloses a Young diagram, which shall be the image of $w$ under $\lambda$. In 
Figure~\ref{Fig:BijectionWordsYoung}, an example of a word and its corresponding Young diagram is given.  
\begin{figure}[tbh]
\centering
\includegraphics[width=.15\textwidth]{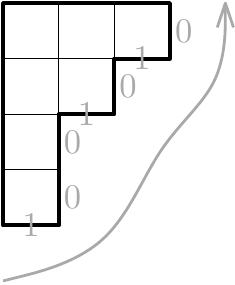}
\caption{The Young diagram $\lambda(1001010)$.}
\label{Fig:BijectionWordsYoung}
\end{figure}
The number of columns (resp. rows) of $\lambda(\omega)$ equals $\vert\omega\vert_1$ (resp. $\vert\omega\vert_0$).
Furthermore, for two words $\omega$ and $\sigma$ of length $n$ it holds $\omega\leq \sigma$ if and only if $\lambda(\omega)$ is contained in $\lambda(\sigma)$. 
Finally, the number of cells of $\lambda(\omega)$ is given by $d(\omega)$.\\

Given a word $\omega=\omega_1\cdots\omega_n$ the word $\overleftarrow{\omega}$ is defined as the word $\omega_n\cdots\omega_1$, the word $\overline{\omega}$ is defined as the word
$\overline{\omega_1}\cdots\overline{\omega_n}$ where $\overline{0}=1$ and $\overline{1}=0$ and $\omega^\ast$ is defined as the word $\overline{\overleftarrow{\omega}}$. \\

In the next subsection, \textit{Dyck words} come up: a Dyck word is a word $\omega$ of even length such that $\vert\omega\vert_1=\vert\omega\vert_0$
and each prefix $\omega'$ of $\omega$ satisfies $\vert\omega'\vert_0\geq\vert\omega'\vert_1$.

\subsection{Extended link patterns}
A \textit{link pattern} $\pi$ of size $2n$ is defined as a partition of $\{1,2,\dots,2n\}$ into $n$ blocks of size $2$ that are pairwise non-crossing, i.e., there are no integers $i<j<k<l$ such that $\{i,k\}$ and 
$\{j,l\}$ are both in $\pi$. In the following, link patterns are represented by non-crossing arches between $2n$ aligned points. An example of a link pattern is given in Figure~\ref{Fig:LinkPattern}.
It is a well known fact that link patterns of size $2n$ are in bijection with Dyck words of length $2n$: to a link pattern $\pi$ of size $2n$ the Dyck word $\omega$ of length $2n$ is assigned where 
$\omega_i=0$ and $\omega_j=1$ for each pair $\{i,j\}$ in $\pi$ with $i<j$. 
For example, the Dyck word corresponding to the link pattern depicted in Figure~\ref{Fig:LinkPattern} is \hbox{$001101000111$}. In this article, a more general notion of link patterns is needed. 

\begin{figure}[tbh]
\includegraphics[width=.4\textwidth]{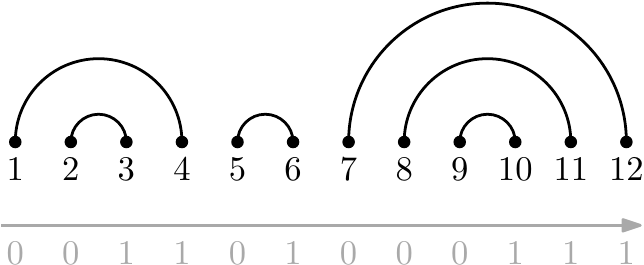}
\caption{The link pattern $\{\{1,4\},\{2,3\},\{5,6\},\{7,12\},\{8,11\},\{9,10\}\}$.}
\label{Fig:LinkPattern}
\end{figure}

\begin{Def}
An \textnormal{extended link pattern} $\pi$ on $\{1,\dots,n\}$ is the data of integers 
\begin{equation}1\leq \ell_1<\ell_2<\cdots<\ell_i< r_1<r_2<\cdots<r_j\leq n\notag\end{equation}                                                                                 
together with a link pattern
on each maximal interval of integers in $\{1,\dots,n\}$ that does not contain any of the points $\ell_k$ or $r_k$. The integers $\ell_1,\ell_2,\dots,\ell_i$ are said to be the \textnormal{left points} of $\pi$ and the 
integers $r_1,r_2,\dots,r_j$ are said to be the \textnormal{right points} of $\pi$. 
\end{Def}

In the figures, a left point $\ell_k$ of an extended link pattern $\pi$ is represented by attaching the extremity of an arch to the point $\ell_k$, with the arch going left, whereas a right point $r_k$ is represented
by attaching the extremity of an arch to the point $r_k$, with the arch going right.
An example of an extended link pattern is given in Figure~\ref{Fig:ExtendedLinkPattern}.
To an extended link pattern $\pi$ with left points $\ell_1<\ell_2<\cdots<\ell_i$ and right points $r_1<r_2<\cdots<r_j$ a word $\omega=\textbf{w}(\pi)$ is assigned as follows: as a start
it is set $\omega_{\ell_k}=1$ for all $1\leq k\leq i$ and $\omega_{r_k}=0$ for all $1\leq k\leq j$. Then each link pattern in $\pi$ is associated with its corresponding Dyck word. For instance, the word assigned
to the extended link pattern in Figure~\ref{Fig:ExtendedLinkPattern} is $1001101101010$.

\begin{figure}[tbh]
\includegraphics[width=.5\textwidth]{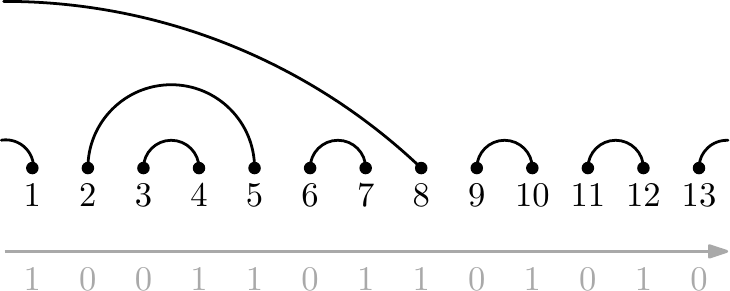}
\caption{An extended link pattern with left points $1$ and $8$ and right point $13$.}
\label{Fig:ExtendedLinkPattern}
\end{figure}

\begin{Prop}\label{Prop:BijectionWordsExtendedLinkPatterns}
The map \textnormal{\textbf{w}} is a bijection from the set of extended link patterns on $\{1,\dots,n\}$ to the set of words of length $n$. 
\end{Prop}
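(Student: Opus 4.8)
The plan is to construct an explicit inverse to $\textbf{w}$, proving injectivity and surjectivity in one stroke. The guiding observation is that the recipe defining $\textbf{w}$ is nothing but a bracket-matching: reading a word $\omega$ of length $n$ and interpreting each $0$ as an opening bracket and each $1$ as a closing bracket, the left points (which carry a $1$) ought to be exactly the \emph{unmatched} closing brackets, the right points (which carry a $0$) the \emph{unmatched} opening brackets, and the link patterns on the maximal intervals should encode the matched pairs. This fits the conventions already fixed, since the bijection between link patterns and Dyck words recalled before Figure~\ref{Fig:LinkPattern} sends an arch $\{a,b\}$ with $a<b$ to the pair $(\omega_a,\omega_b)=(0,1)$, and a Dyck word is precisely a balanced bracket sequence in this encoding.

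First I would record the standard structural fact about bracket sequences. Process $\omega$ from left to right with a stack, pushing the position of each $0$ and, upon reading a $1$, popping the top of the stack if it is nonempty and otherwise declaring that $1$ unmatched; positions still on the stack at the end are the unmatched $0$'s. This yields a uniquely determined non-crossing set of matched pairs, with two features that are essential here. First, every unmatched $1$ lies to the left of every unmatched $0$, so the unmatched positions read, from left to right, as a block of $1$'s followed by a block of $0$'s. Second, if $(a,b)$ is a matched pair then the subword $\omega_{a+1}\cdots\omega_{b-1}$ is balanced; in particular no unmatched position lies strictly between $a$ and $b$.

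From these facts the candidate inverse $\Phi$ is defined as follows. Let $\ell_1<\cdots<\ell_i$ be the positions of the unmatched $1$'s and $r_1<\cdots<r_j$ those of the unmatched $0$'s. The first structural feature gives $\ell_i<r_1$, so these are admissible left and right points. The second feature shows that every matched pair, together with everything it encloses, contains no unmatched position, hence lies within a single maximal interval of non-special positions; consequently the restriction of $\omega$ to each such interval is a balanced word, that is, a Dyck word, which I convert into a link pattern via the bijection recalled above. This produces a well-defined extended link pattern $\Phi(\omega)$, so it remains only to check that $\Phi$ and $\textbf{w}$ are mutually inverse.

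The identity $\textbf{w}\circ\Phi=\mathrm{id}$ is immediate, since $\textbf{w}$ writes $1$ at each left point, $0$ at each right point, and the prescribed Dyck word on each maximal interval, thereby reproducing $\omega$ symbol by symbol. For $\Phi\circ\textbf{w}=\mathrm{id}$ one runs the stack through $\textbf{w}(\pi)$: each Dyck-word interval is balanced, so it matches internally and leaves the stack unchanged; every left point is a $1$ encountered with empty stack, as everything before it consists only of earlier unmatched $1$'s and balanced blocks, hence it stays unmatched; and every right point is a $0$ that is never popped, as everything after it consists only of later unmatched $0$'s and balanced blocks, hence it too stays unmatched. Thus $\Phi$ recovers exactly the left points, right points, and internal arches of $\pi$. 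The one step needing genuine care is the structural fact about bracket sequences, and specifically the claim that no unmatched position can be enclosed by a matched pair: this is what guarantees that the matched pairs respect the partition into maximal intervals, and hence that the Dyck words of $\Phi(\omega)$ are well defined. Once it is in place, the rest of the verification is routine bookkeeping.
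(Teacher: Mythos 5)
Your proof is correct. Note that the paper does not actually prove this proposition; it only cites it from \cite{TFPL}, so there is no in-paper argument to compare against. Your stack-based bracket-matching construction of the explicit inverse (unmatched $1$'s become left points, unmatched $0$'s become right points, matched pairs become the arches, with the two structural facts guaranteeing that the left points precede the right points and that the matched pairs respect the maximal intervals) is the standard and complete way to establish the bijection, and the one delicate point you single out --- that the subword strictly enclosed by a matched pair is a Dyck word and hence fully matched, so no special point can be trapped inside an arch --- is exactly the step that makes $\Phi$ well defined.
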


The previous proposition is given in \cite[Proposition~1.6]{TFPL}. At some point in this article, it will become necessary to consider extended link patterns together with an orientation of the arches:

\begin{Def}\begin{enumerate}
\item A \textnormal{directed extended link pattern} $\overrightarrow{\pi}$ on $\{1,\dots,n\}$ is an extended link pattern $\pi$ on $\{1,\dots,n\}$ 
where each $i$ in $\{1,\dots,n\}$ is either a \textnormal{sink} or a \textnormal{source} with the constraint that
each pair in one of the link patterns in $\pi$ consists of a sink and a source. Furthermore, the set of pairs in $\overrightarrow{\pi}$ where the larger integer is the source is denoted by
$RL(\overrightarrow{\pi})$. To a directed extended link pattern $\overrightarrow{\pi}$ on $\{1,\dots,n\}$ its \textnormal{source-sink-word} $w=w_1\cdots w_n$ is assigned as follows: for each $i$ from $1$ to $n$ set
$w_i=0$ if $i$ is a source or $w_i=1$ if $i$ is a sink.
\item A directed extended link pattern is \textnormal{left-hand-incoming} if all left points are sinks, respectively \textnormal{right-hand-outgoing} if all right points are sources.
\end{enumerate}
\end{Def}

A directed extended link pattern is represented by an extended link pattern together with an orientation of each arch and half arch such that each arch attached to a source is outgoing and each 
arch attached to a sink is incoming.
An example of a right-hand-outgoing directed extended link pattern is depicted in Figure~\ref{Fig:DirectedExtendedLinkPattern}. Its source-sink-word is $1010101010010$.

\begin{figure}[tbh]
\includegraphics[width=.5\textwidth]{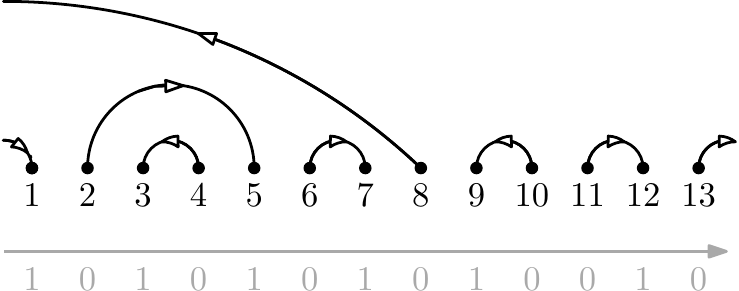}
\caption{A directed extended link pattern with sources $2,4,6,8,10,11,13$ and sinks $1,3,5,7,9,12$.}
\label{Fig:DirectedExtendedLinkPattern}
\end{figure}

\begin{Def}
\begin{enumerate}
\item A word $w'$ of size $n$ is \textnormal{feasible} for a word $w$ of length $n$ if there exists a directed extended link pattern $\overrightarrow{\pi}$ with underlying extended link pattern 
\textnormal{\textbf{w}}$^{-1}(w')$ such that $w$ is the source-sink-word of $\overrightarrow{\pi}$. Such a $\overrightarrow{\pi}$ is unique and therefore one can define $g(w,w')=RL(\overrightarrow{\pi})$ for
all words $w,w'$ such that $w'$ is feasible for $w$. 
\item A word $w'$ feasible for a word $w$ is said to be \textnormal{left-points-fixing} if $\overrightarrow{\pi}$ is left-hand-incoming, respectively \textnormal{right-points-fixing} if 
$\overrightarrow{\pi}$ is right-hand-outgoing.
\end{enumerate}  
\end{Def}

For instance, the word $1001101101010$ is feasible for $1010101010010$: the latter is the source-sink word of the directed extended link pattern in Figure~\ref{Fig:DirectedExtendedLinkPattern} and 
the former corresponds to the extended link pattern in Figure~\ref{Fig:ExtendedLinkPattern}. Thus, $g(1010101010010,1001101101010)=2$. If a word $w'$ is left-points-fixing feasible for a word $w$, then
$w_i=1$ for each left point $i$ of $\overrightarrow{\pi}$. On the other hand, if $w'$ is right-points-fixing feasible for $w$, then $w_i=0$ for each right point $i$ of $\overrightarrow{\pi}$.

\section{Hexagonal fully packed loop configurations}\label{Sec:HFPLs}

In this section the main objects of this article are introduced, namely hexagonal FPLs. From now on, let $K,L,M$ and $N$ be non-negative integers such that $K\leq M+N$ and $N\leq K+L$. 

\begin{Def}
The graph $H^{K,L,M,N}$ is defined as the induced subgraph of the square lattice with vertex set 
\begin{equation}
\{(x,y)\in\mathbb{Z}^2: y\leq x, y\leq K-1, y\leq -x+2(K+L), y\geq -x-1, y\geq -M-N+K, y\geq x-2(M+L)-1\}.
\notag\end{equation}
\end{Def}

\begin{figure}[tbh]
\includegraphics[width=.6\textwidth]{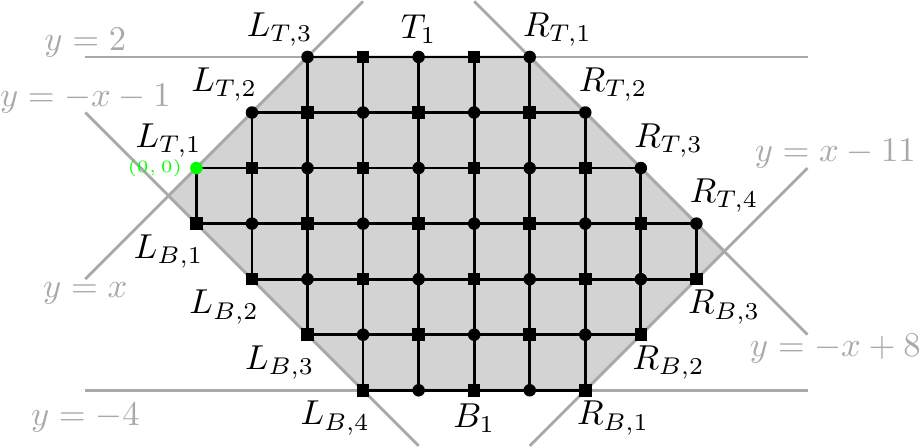}
\caption{The graph $H^{3,1,4,3}$.}
\label{Fig:H_3_1_4_3}
\end{figure}

In Figure~\ref{Fig:H_3_1_4_3}, the graph $H^{3,1,4,3}$ is depicted. As already indicated in Figure~\ref{Fig:H_3_1_4_3}, the vertices of $H^{K,L,M,N}$ are partitioned into \textit{odd} and \textit{even} vertices
in a chessboard manner such that the leftmost vertex of the top row of $H^{K,L,M,N}$ is odd.
In the pictures, odd vertices are illustrated by circles and even vertices by squares. Some vertices of $H^{K,L,M,N}$ are of special interest: 
\begin{itemize}
\item the leftmost vertices $\mathcal{L}_T=\{L_{T,1},\dots, L_{T,K}\}$ of each of the top $K$ rows of $H^{K,L,M,N}$;
\item the rightmost vertices $\mathcal{R}_T=\{R_{T,1},\dots, R_{T,M}\}$ of each of the top $M$ rows of $H^{K,L,M,N}$;
\item the odd vertices $\mathcal{T}=\{T_1,\dots,T_L\}$ of the top row of $H^{K,L,M,N}$ that are not in $\mathcal{L}_T\cup\mathcal{R}_T$;
\item the leftmost vertices $\mathcal{L}_B=\{L_{B,1},\dots, L_{B,M+N-K}\}$ of each of the bottom $M+N-K$ rows of $H^{K,L,M,N}$;
\item the rightmost vertices $\mathcal{R}_B=\{R_{B,1},\dots, R_{B,N}\}$ of each of the bottom $N$ rows of $H^{K,L,M,N}$;
\item the even vertices $\mathcal{B}=\{B_1,\dots,B_{K+L-N}\}$ of the bottom row of $H^{K,L,M,N}$, that are not in $\mathcal{L}_B\cup\mathcal{R}_B$.
\end{itemize}
All vertices are numbered from left to right.

\subsection{Hexagonal fully packed loop configurations}

\begin{Def}\label{Def:HFPL}
A \textnormal{hexagonal fully packed loop configuration} (HFPL) of size $(K,L,M,N)$ is a subgraph $f$ of $H^{K,L,M,N}$ that satisfies:
 \begin{enumerate}
  \item The vertices in $\mathcal{L}_B\cup\mathcal{L}_T\cup\mathcal{R}_B\cup\mathcal{R}_T$ are either of degree $0$ or of degree $1$.
  \item The vertices in $\mathcal{T}\cup\mathcal{B}$ are of degree $1$.
  \item All other vertices of $H^{K,L,M,N}$ are of degree $2$.
  \item There is neither a path in $f$ that joins two vertices in $\mathcal{L}_B\cup\mathcal{L}_T$ nor a path that joins two vertices in $\mathcal{R}_B\cup\mathcal{R}_T$.
 \end{enumerate}
\end{Def}

\begin{figure}[tbh]
\includegraphics[width=.4\textwidth]{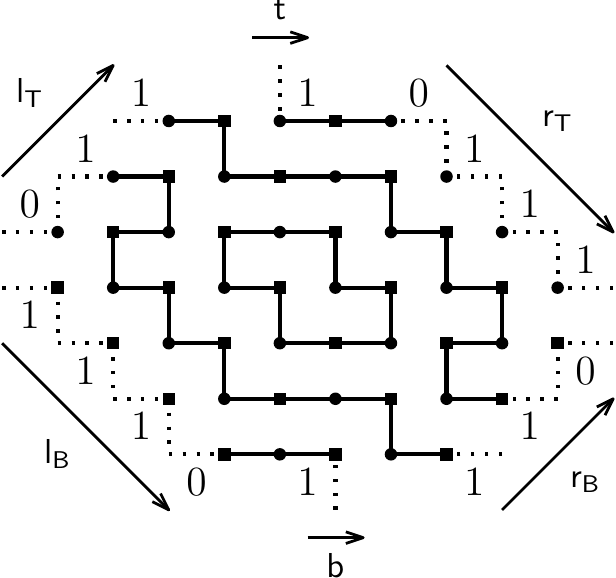}
\caption{A hexagonal fully packed loop configuration of size $(3,1,4,3)$.}
\label{Fig:Hexagonal_FPL_3_1_4_3}
\end{figure}

An example of an HFPL is given in Figure~\ref{Fig:Hexagonal_FPL_3_1_4_3}. In Section~\ref{Sec:Recovering_HFPLs_from_oriented_HFPLs}, local configurations around each vertex of an HFPL are considered.
It then will be necessary that each vertex of an HFPL is of degree 2. To achieve that, external edges along each boundary of an HFPL are attached as follows: given an HFPL $f$ to each vertex in 
$\mathcal{B}\cup\mathcal{T}$ a vertical external edge is attached, to each vertex in $\mathcal{L}_B\cup\mathcal{L}_T\cup\mathcal{R}_B\cup\mathcal{R}_T$ of degree $1$ a horizontal external edges is attached
and to each vertex in $\mathcal{L}_B\cup\mathcal{L}_T\cup\mathcal{R}_B\cup\mathcal{R}_T$ of degree $0$ both a horizontal and a vertical edge are attached. The so-obtained HFPL together with external edges
is denoted by $\overline{f}$. In the figures, the external edges are represented by dotted lines. \\

By the first three conditions of Definition~\ref{Def:HFPL}, an HFPL is made up of a number of paths where non-closed
paths have their extremities in \hbox{$\mathcal{L}_T\cup\mathcal{T}\cup\mathcal{R}_T\cup\mathcal{R}_B\cup\mathcal{B}\cup\mathcal{L}_B$}. 
In the following, HFPLs will be considered according to certain boundary conditions that depend on the extremities of the non-closed paths. In the particular case of a path, that has one of its extremities in
$\mathcal{T}$ (respectively in $\mathcal{B}$), roughly spoken, the boundary conditions encode whether that extremity is connected to an extremity to its left or below (respectively to the left) or wheter
it is connected to an extremity to its right (respectively to its right or above).

\begin{Def}\label{Def:BoundaryHFPL}
To each HFPL $f$ of size $(K,L,M,N)$ is assigned a sextuple of words $(\mathsf{l}_\mathsf{T}, \mathsf{t}, \mathsf{r}_\mathsf{T}; \mathsf{r}_\mathsf{B},\mathsf{b},\mathsf{l}_\mathsf{B})$ of length 
$(K,L,M;N,K+L-N,M+N-K)$ respectively in the following way:
\begin{itemize}
  \item[($\mathsf{l}_\mathsf{T}$)] If the vertex $L_{T,i}\in\mathcal{L}_T$ is of degree $1$, set $(\mathsf{l}_\mathsf{T})_i=1$, otherwise, set $(\mathsf{l}_\mathsf{T})_i=0$.
  \item[($\mathsf{t}$)\,] If the vertex $T_i\in\mathcal{T}$ is connected by a path with either a vertex in $\mathcal{L}_T\cup\mathcal{L}_B\cup\mathcal{B}$ or a vertex $T_h$ in $\mathcal{T}$, such that $h<i$, set 
	$\mathsf{t}_i=0$, otherwise, set $\mathsf{t}_i=1$.
  \item[($\mathsf{r}_\mathsf{T}$)] If the vertex $R_{T,i}\in\mathcal{R}_T$ is of degree $1$, set $(\mathsf{r}_\mathsf{T})_i=0$, otherwise, set $(\mathsf{r}_\mathsf{T})_i=1$.
  \item[($\mathsf{r}_\mathsf{B}$)] If the vertex $R_{B,i}\in\mathcal{R}_B$ is of degree $1$, set $(\mathsf{r}_\mathsf{B})_i=1$, otherwise, set $(\mathsf{r}_\mathsf{B})_i=0$.
  \item[($\mathsf{b}$)\,] If the vertex $B_i\in\mathcal{B}$ is connected by a path with either a vertex in $\mathcal{R}_T\cup\mathcal{R}_B\cup\mathcal{T}$ or a vertex $B_j$ in $\mathcal{B}$, such that $j>i$,
        set $\mathsf{b}_i=0$, otherwise, set $\mathsf{b}_i=1$.   
  \item[($\mathsf{l}_\mathsf{B}$)] If the vertex $L_{B,i}\in\mathcal{L}_{B}$ is of degree $1$, set $(\mathsf{l}_\mathsf{B})_i=0$, otherwise, set $(\mathsf{l}_\mathsf{B})_i=1$.
\end{itemize}
The HFPL $f$ is said to have \textnormal{boundary} $(\mathsf{l}_\mathsf{T},\mathsf{t},\mathsf{r}_\mathsf{T};\mathsf{r}_\mathsf{B},\mathsf{b},\mathsf{l}_\mathsf{B})$. Furthermore, 
the set of HFPLs with boundary $(\mathsf{l}_\mathsf{T},\mathsf{t},\mathsf{r}_\mathsf{T};\mathsf{r}_\mathsf{B},\mathsf{b},\mathsf{l}_\mathsf{B})$ is denoted by $H_{\mathsf{l}_\mathsf{T},\mathsf{t},\mathsf{r}_\mathsf{T}}^{\mathsf{r}_\mathsf{B},\mathsf{b},\mathsf{l}_\mathsf{B}}$ and its cardinality by $h_{\mathsf{l}_\mathsf{T},\mathsf{t},\mathsf{r}_\mathsf{T}}^{\mathsf{r}_\mathsf{B},\mathsf{b},\mathsf{l}_\mathsf{B}}$.
\end{Def}

The HFPL depicted in Figure~\ref{Fig:Hexagonal_FPL_3_1_4_3} has boundary $(011,1,0111;110,1,1110)$. With each HFPL a pair of extended link patterns will be associated, that encodes 
which pairs of extremities of non-closed paths are connected by a path.
To be more precise, to an HFPL of size $(K,L,M,N)$ 
a pair of extended link patterns 
$(\pi_b,\pi_t)$, where $\pi_b$ is an extended link pattern on $\{1,\dots,K+L-N\}$ and $\pi_t$ one on $\{1,\dots,L\}$, is assigned as follows:

\begin{enumerate}
 \item[$(\pi_b)$] In the case when $B_i, B_j\in\mathcal{B}$ are linked by a path in $f$ then $\{i,j\}\in\pi_b$. Otherwise, if $B_i\in\mathcal{B}$ is connected 
 with a vertex in $\mathcal{L}_B\cup\mathcal{L}_T$, $i$ is a left point of $\pi_b$ and, if $B_i$ is connected with a vertex in $\mathcal{T}\cup\mathcal{R}_T\cup\mathcal{R}_B$,
$i$ is a right point of $\pi_b$.
 \item[$(\pi_t)$] In the case when $T_i, T_j\in\mathcal{T}$ are linked by a 
path in $f$ then $\{i,j\}\in\pi_t$. Otherwise, 
if $T_i\in\mathcal{T}$ is connected with a vertex in 
$\mathcal{L}_B\cup\mathcal{L}_T\cup\mathcal{B}$,
$i$ is a left point of $\pi_t$ and, if $T_i$ is connected with a vertex in 
$\mathcal{R}_T\cup\mathcal{R}_B$,
$i$ is a right point of $\pi_t$.
\end{enumerate}
In Figure~\ref{Fig:HFPL_Example_5_4_5_0}, an example of an HFPL and its associated pair of extended link patterns is given. 
For any HFPL in $H_{\mathsf{l}_\mathsf{T},\mathsf{t},\mathsf{r}_\mathsf{T}}^{\mathsf{r}_\mathsf{B},\mathsf{b},\mathsf{l}_\mathsf{B}} $ with extended link patterns $\pi_b$ and $\pi_t$, it holds 
\textnormal{\textbf{w}}$(\pi_b)=\mathsf{b}$ and \textnormal{\textbf{w}}$(\pi_t)=\overline{\mathsf{t}}$. 

\begin{figure}[tbh]
\includegraphics[width=1\textwidth]{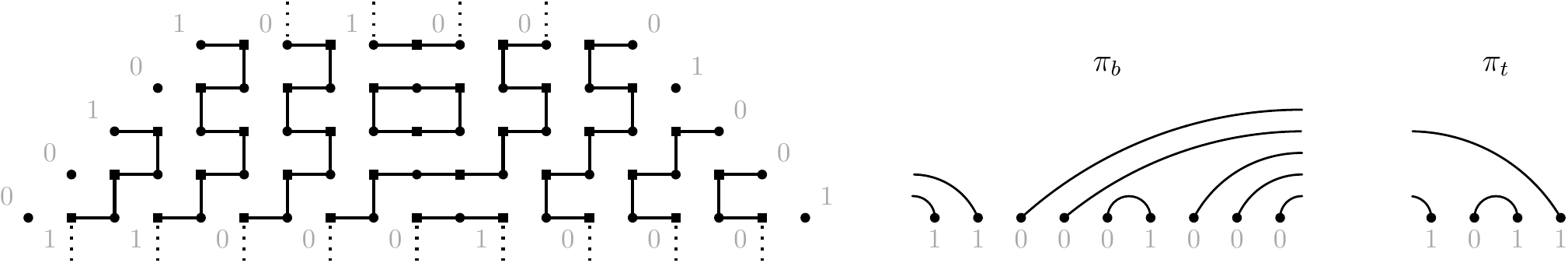}
\caption{An HFPL of size $(5,4,5,0)$ with boundary $(00101,0100,01001;\varepsilon,110001000,\varepsilon)$ and its associated pair of extended link patterns.}
\label{Fig:HFPL_Example_5_4_5_0}
\end{figure}

\subsection{Oriented hexagonal fully packed loop configurations}
The definition of HFPLs contains global conditions, as do the definitions of the top and the bottom boundary word associated with an HFPL. These global conditions can be omitted when 
adding an orientation to each edge of an HFPL.

\begin{Def}
An \textnormal{oriented HFPL} of size $(K,L,M,N)$ is an HFPL of the same size together with an orientation of each edge such that
each vertex of degree $2$ is incident to an incoming and an outgoing edge, each edge attached to a vertex in $\mathcal{L}_T\cup\mathcal{L}_B$ is outgoing and
each edge attached to a vertex in $\mathcal{R}_T\cup\mathcal{R}_B$ is incoming.
\end{Def}

In Figure~\ref{Fig:Oriented_HFPL_4_3_2_3}, an example of an oriented HFPL is given.
Since in Section~\ref{Sec:Recovering_HFPLs_from_oriented_HFPLs} it becomes necessary that each vertex in an oriented HFPL is of degree $2$,
an oriented HFPL with directed external edges $\overline{f}$ is associated with an oriented HFPL $f$ as follows: 
first, unoriented external edges are attached to $f$ in the same way as they are attached to ordinary HFPLs. Then an orientation is added to each external edge such that 
for a vertex in $\mathcal{L}_T\cup\mathcal{L}_B$ (respectively in $\mathcal{R}_T\cup\mathcal{R}_B$), that is incident to two external edges, the horizontal edge is incoming (respectively outgoing) 
and the vertical edge is outgoing (respectively incoming) and all other vertices are incident to an incoming and an outgoing edge. In the figures, the directed external edges are represented by dotted arrows.

\begin{figure}[tbh]
\includegraphics[width=.5\textwidth]{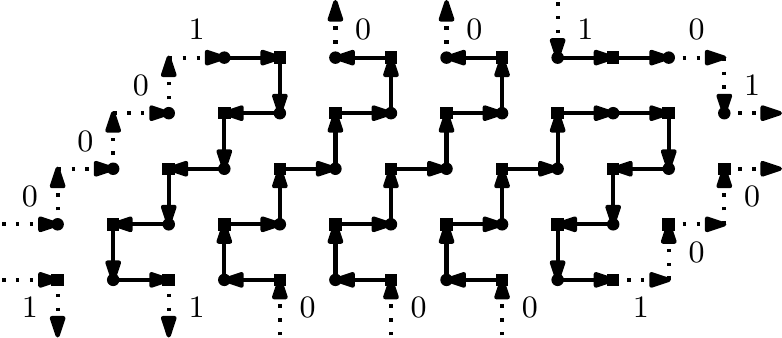}
\caption{An oriented HFPL of size $(4,3,2,3)$.}
\label{Fig:Oriented_HFPL_4_3_2_3}
\end{figure}

\begin{Def}
To each oriented HFPL its \textnormal{boundary} $(\mathsf{l}_\mathsf{T},\mathsf{t},\mathsf{r}_\mathsf{T};\mathsf{r}_\mathsf{B},\mathsf{b},\mathsf{l}_\mathsf{B})$ is assigned as follows:
\begin{enumerate}
  \item[($\mathsf{l}_\mathsf{T}$)] If $L_{T,i}\in\mathcal{L}_T$ has out-degree 1, then $(\mathsf{l}_\mathsf{T})_i=1$, otherwise, $(\mathsf{l}_\mathsf{T})_i=0$.
  \item[($\mathsf{t}$)\,] If $T_i\in\mathcal{T}$ has in-degree 1, then $\mathsf{t}_i=0$, otherwise, $\mathsf{t}_i=1$.
  \item[($\mathsf{r}_\mathsf{T}$)] If $R_{T,i}\in\mathcal{R}_T$ has in-degree 1, then $(\mathsf{r}_\mathsf{T})_i=0$, otherwise, $(\mathsf{r}_\mathsf{T})_i=1$.
  \item[($\mathsf{r}_\mathsf{B}$)] If $R_{B,i}\in\mathcal{R}_B$ has in-degree 1, then $(\mathsf{r}_\mathsf{B})_i=1$, otherwise, $(\mathsf{r}_\mathsf{B})_i=0$.
  \item[($\mathsf{b}$)\,] If $B_i\in\mathcal{B}$ has in-degree 1, then $\mathsf{b}_i=1$, otherwise, $\mathsf{b}_i=0$.
  \item[($\mathsf{l}_\mathsf{B}$)] If $L_{B,i}\in\mathcal{L}_B$ has out-degree 1, then $(\mathsf{l}_\mathsf{B})_i=0$, otherwise, $(\mathsf{l}_\mathsf{B})_i=1$.
 \end{enumerate}
The set of oriented HFPLs with boundary $(\mathsf{l}_\mathsf{T},\mathsf{t},\mathsf{r}_\mathsf{T};\mathsf{r}_\mathsf{B},\mathsf{b},\mathsf{l}_\mathsf{B})$ is denoted by $\overrightarrow{H}_{\mathsf{l}_\mathsf{T},\mathsf{t},\mathsf{r}_\mathsf{T}}^{\mathsf{r}_\mathsf{B},\mathsf{b},\mathsf{l}_\mathsf{B}}$ and its cardinality by 
$\overrightarrow{h}_{\mathsf{l}_\mathsf{T},\mathsf{t},\mathsf{r}_\mathsf{T}}^{\mathsf{r}_\mathsf{B},\mathsf{b},\mathsf{l}_\mathsf{B}}$.
\end{Def}

The oriented HFPL depicted in Figure~\ref{Fig:Oriented_HFPL_4_3_2_3} has boundary $(0001,001,01;100,1000,1)$. For oriented HFPLs, nice symmetries hold:

\begin{Prop}
\begin{enumerate}
\item Vertical reflection together with the reorientation of all edges exchanges $\overrightarrow{H}_{\mathsf{l}_\mathsf{T},\mathsf{t},\mathsf{r}_\mathsf{T}}^{\mathsf{r}_\mathsf{B},\mathsf{b},\mathsf{l}_\mathsf{B}}$ and 
$\overrightarrow{H}_{\mathsf{r}_\mathsf{T}^\ast,\mathsf{t}^\ast,\mathsf{l}_\mathsf{T}^\ast}^{\mathsf{l}_\mathsf{B}^\ast,\mathsf{b}^\ast,\mathsf{r}_\mathsf{B}^\ast}$. Thus, 
$\overrightarrow{h}_{\mathsf{l}_\mathsf{T},\mathsf{t},\mathsf{r}_\mathsf{T}}^{\mathsf{r}_\mathsf{B},\mathsf{b},\mathsf{l}_\mathsf{B}}=\overrightarrow{h}_{\mathsf{r}_\mathsf{T}^\ast,\mathsf{t}^\ast,\mathsf{l}_\mathsf{T}^\ast}^{\mathsf{l}_\mathsf{B}^\ast,\mathsf{b}^\ast,\mathsf{r}_\mathsf{B}^\ast}$.

\item Horizontal reflection exchanges $\overrightarrow{H}_{\mathsf{l}_\mathsf{T},\mathsf{t},\mathsf{r}_\mathsf{T}}^{\mathsf{r}_\mathsf{B},\mathsf{b},\mathsf{l}_\mathsf{B}}$ and
$\overrightarrow{H}_{\overline{\mathsf{l}_\mathsf{B}},\overline{\mathsf{b}},\overline{\mathsf{r}_\mathsf{B}}}^{\overline{\mathsf{r}_\mathsf{T}},\overline{\mathsf{t}},\overline{\mathsf{l}_\mathsf{T}}}$. Thus,
$\overrightarrow{h}_{\mathsf{l}_\mathsf{T},\mathsf{t},\mathsf{r}_\mathsf{T}}^{\mathsf{r}_\mathsf{B},\mathsf{b},\mathsf{l}_\mathsf{B}}=
\overrightarrow{h}_{\overline{\mathsf{l}_\mathsf{B}},\overline{\mathsf{b}},\overline{\mathsf{r}_\mathsf{B}}}^{\overline{\mathsf{r}_\mathsf{T}},\overline{\mathsf{t}},\overline{\mathsf{l}_\mathsf{T}}}$.
\end{enumerate}
\end{Prop}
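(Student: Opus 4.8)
The plan is to realize each of the two symmetries by an explicit affine transformation of the lattice $\Z^2$ that carries $H^{K,L,M,N}$ onto the graph underlying the target set, and then to track what this transformation does to orientations and to the six boundary words. For part~(1) I would take $\Phi(x,y)=(K+M+2L-x,\,y+M-K)$, i.e.\ the reflection in the vertical line $x=(K+M)/2+L$ followed by the vertical translation by $M-K$ (a glide reflection along the reflection axis), and for part~(2) the reflection $\Psi(x,y)=(x,-1-y)$ in the horizontal line $y=-\tfrac12$. The first step is to substitute $x=K+M+2L-x'$, $y=y'-(M-K)$ (respectively $x=x'$, $y=-1-y'$) into the six inequalities defining the vertex set of $H^{K,L,M,N}$ and to check that they become exactly the six inequalities defining $H^{M,L,K,M+N-K}$ (respectively $H^{M+N-K,K+L-N,N,M}$); these are the sizes forced by matching the lengths of the boundary words on the two sides, and the substitution is a routine but essential verification.

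Next I would check that $\Phi$ and $\Psi$ induce bijections of the respective sets of oriented HFPLs. Since an affine transformation of this kind maps the square lattice to itself and preserves vertex degrees and (directed) edges, it suffices to verify three things: that the distinguished vertex sets are matched correctly, that the chessboard colouring is respected, and that the orientation conditions at the left and right boundaries are met. Under $\Phi$ the set $\mathcal{L}_T$ is sent to $\mathcal{R}_T$ of the target (and $\mathcal{L}_B\leftrightarrow\mathcal{R}_B$, $\mathcal{R}_T\to\mathcal{L}_T$, $\mathcal{R}_B\to\mathcal{L}_B$), while $\mathcal{T}\to\mathcal{T}$ and $\mathcal{B}\to\mathcal{B}$; under $\Psi$ one instead gets $\mathcal{L}_T\to\mathcal{L}_B$, $\mathcal{R}_T\to\mathcal{R}_B$, $\mathcal{T}\to\mathcal{B}$ and the reverses. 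For the colouring one computes $x'+y'\equiv x+y\pmod 2$ under $\Phi$ but $x'+y'\equiv x+y+1\pmod 2$ under $\Psi$, so $\Phi$ preserves and $\Psi$ reverses the parity; together with the placement of the reference (odd) vertex this shows $\mathcal{T}$ is matched with $\mathcal{T}$ in part~(1) and with the even set $\mathcal{B}$ in part~(2). For the orientations, a reflection sends an outgoing edge to an outgoing edge; hence for $\Psi$ the condition that edges at $\mathcal{L}_T\cup\mathcal{L}_B$ are outgoing and those at $\mathcal{R}_T\cup\mathcal{R}_B$ incoming is preserved without change, which is why no reorientation is needed in part~(2), whereas for $\Phi$ the images of $\mathcal{L}_T$ lie in $\mathcal{R}_T$ of the target, so the outgoing edges there must be reversed to incoming. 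This is exactly the reorientation in the statement, and reversing all edges simultaneously keeps every degree-$2$ vertex incident to one incoming and one outgoing edge.

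Finally I would read off the word transformations from the definition of the boundary of an oriented HFPL. The complementation $\overline{\,\cdot\,}$ arises because the value conventions are opposite on the two boundaries being interchanged (for instance out-degree $1$ gives $1$ on $\mathcal{L}_T$ but the corresponding in-degree $1$ gives $0$ on $\mathcal{R}_T$, and likewise $\mathcal{T}$ versus $\mathcal{B}$), and the reversal $\overleftarrow{\,\cdot\,}$ arises precisely when the transformation reverses the order in which a boundary segment is indexed. Here the key computation is that $\Phi$, being a left--right reflection, reverses the indexing of each slanted side (one checks $L_{T,i}\mapsto R_{T,K+1-i}$, and similarly for the other sides), so every word is both reversed and complemented, giving $\cdot^{\ast}=\overline{\overleftarrow{\,\cdot\,}}$; by contrast $\Psi$ is a top--bottom reflection that matches the indices of each side directly ($L_{T,i}\mapsto L_{B,i}$, etc.), so the words are only complemented. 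Combining these with the vertex-set correspondence yields the stated boundaries $(\mathsf{r}_\mathsf{T}^{\ast},\mathsf{t}^{\ast},\mathsf{l}_\mathsf{T}^{\ast};\mathsf{l}_\mathsf{B}^{\ast},\mathsf{b}^{\ast},\mathsf{r}_\mathsf{B}^{\ast})$ and $(\overline{\mathsf{l}_\mathsf{B}},\overline{\mathsf{b}},\overline{\mathsf{r}_\mathsf{B}};\overline{\mathsf{r}_\mathsf{T}},\overline{\mathsf{t}},\overline{\mathsf{l}_\mathsf{T}})$, and the equalities of cardinalities follow at once. I expect the main obstacle to be the bookkeeping in this last step: pinning down the exact index-order of the distinguished vertices along each slanted side and confirming the chessboard parity, so that the correspondences $\mathcal{T}\leftrightarrow\mathcal{T}$, $\mathcal{T}\leftrightarrow\mathcal{B}$ and the reversals $L_{T,i}\mapsto R_{T,K+1-i}$ come out with exactly the signs the statement demands.
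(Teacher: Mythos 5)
Your proposal is correct, and it is essentially the argument the paper intends: the proposition is stated without proof, the symmetries being exactly the reflections named in its statement, and your explicit maps $\Phi(x,y)=(K+M+2L-x,\,y+M-K)$ and $\Psi(x,y)=(x,-1-y)$ do carry $H^{K,L,M,N}$ onto $H^{M,L,K,M+N-K}$ and $H^{M+N-K,K+L-N,N,M}$ respectively, with the parity, orientation and index-reversal bookkeeping coming out as you describe. The only thing you supply beyond the paper is the (welcome) verification that the ``vertical reflection'' must be taken as a glide reflection, i.e.\ composed with the translation by $M-K$, for the two hexagons to match when $K\neq M$.
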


There are certain constraints on the boundary of oriented HFPLs. The theorem below is the first main result of this article:

\begin{Thm}\label{Thm:NecCondHFPL}
Let $(\mathsf{l}_\mathsf{T},\mathsf{t},\mathsf{r}_\mathsf{T};\mathsf{r}_\mathsf{B},\mathsf{b},\mathsf{l}_\mathsf{B})$ be a sextuple of words of length $(K,L,M;N,K+L-N,M+N-K)$ respectively. Then
$\overrightarrow{h}_{\mathsf{l}_\mathsf{T},\mathsf{t},\mathsf{r}_\mathsf{T}}^{\mathsf{r}_\mathsf{B},\mathsf{b},\mathsf{l}_\mathsf{B}}>0$ implies:
\begin{enumerate}
 \item $\vert\mathsf{l}_\mathsf{T}\vert_0+\vert\mathsf{t}\vert_0=\vert\mathsf{r}_\mathsf{B}\vert_0+\vert\mathsf{b}\vert_0$ and $\vert\mathsf{t}\vert_1+\vert\mathsf{r}_\mathsf{T}\vert_1=\vert\mathsf{b}\vert_1+\vert\mathsf{l}_\mathsf{B}\vert_1$;
 \item $\mathsf{l}_\mathsf{T}\,\mathsf{t}\leq\mathsf{b}\,\mathsf{r}_\mathsf{B}$ and $\mathsf{t}\,\mathsf{r}_\mathsf{T}\leq\mathsf{l}_\mathsf{B}\,\mathsf{b}$ for the concatenations $\mathsf{l}_\mathsf{T}\,\mathsf{t}$,
 $\mathsf{b}\,\mathsf{r}_\mathsf{B}$, $\mathsf{t}\,\mathsf{r}_\mathsf{T}$ and $\mathsf{l}_\mathsf{B}\,\mathsf{b}$;
 \item $d(\mathsf{r}_\mathsf{B})+d(\mathsf{b})+d(\mathsf{l}_\mathsf{B})\geq d(\mathsf{l}_\mathsf{T})+d(\mathsf{t})+d(\mathsf{r}_\mathsf{T})+\vert\mathsf{l}_\mathsf{T}\vert_1\vert\mathsf{t}\vert_0+\vert\mathsf{t}\vert_1\vert\mathsf{r}_\mathsf{T}\vert_0+\vert\mathsf{r}_\mathsf{B}\vert_0\vert\mathsf{l}_\mathsf{B}\vert_1$.
\end{enumerate}
\end{Thm}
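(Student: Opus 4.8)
The plan is to work throughout with the orientation, reading an oriented HFPL as a divergence-free flow: since every degree-$2$ vertex has one incoming and one outgoing edge, $\overline{f}$ decomposes into vertex-disjoint directed paths (and closed cycles), each path running from a \emph{source} to a \emph{sink}. Decoding the definition of the boundary, a $1$ in $\mathsf{l}_\mathsf{T}$ or in $\mathsf{t}$ marks a source on the top-left resp.\ top side, a $1$ in $\mathsf{b}$ or in $\mathsf{r}_\mathsf{B}$ marks a sink on the bottom resp.\ bottom-right side, and empty boundary vertices carry a pair of external edges. I would first record the elementary identity $d(\alpha\beta)=d(\alpha)+d(\beta)+\vert\alpha\vert_1\vert\beta\vert_0$ for a concatenation, since it shows that the three cross terms $\vert\mathsf{l}_\mathsf{T}\vert_1\vert\mathsf{t}\vert_0$, $\vert\mathsf{t}\vert_1\vert\mathsf{r}_\mathsf{T}\vert_0$, $\vert\mathsf{r}_\mathsf{B}\vert_0\vert\mathsf{l}_\mathsf{B}\vert_1$ in (3) are precisely the inversions created when the boundary words are concatenated along a side. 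A short computation using (1) then rewrites the right-hand side of (3) so that (3) becomes the area inequality $\vert\lambda(\mathsf{l}_\mathsf{B}\,\mathsf{b}\,\mathsf{r}_\mathsf{B})\vert\ge\vert\mu\vert+\vert\nu\vert$ for the two shapes $\mu,\nu$ occurring in~(\ref{Introduction:LR_coefficient}); isolating this reformulation at the outset is what ties (3) to the Littlewood--Richardson story.

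For (1) I would argue by conservation of flow. Summing in-degrees and out-degrees over all vertices of $\overline{f}$, every vertex being balanced, shows that the number of external half-edges oriented into the region equals the number oriented out; written in terms of the boundary words this is one linear relation, namely the difference of the two equalities in (1) -- call them (1a) and (1b). A second, independent relation comes from the chessboard $2$-colouring of the square lattice: each directed path alternates between odd and even vertices, and the colour of every boundary vertex is prescribed by the side on which it lies (the vertices of $\mathcal{T}$ are odd and those of $\mathcal{B}$ are even). Tracking flow with a colour-weighted count, or equivalently cutting the hexagon by one horizontal and one vertical line and balancing the crossing strands, yields the second relation, and the two together give precisely (1a) and (1b). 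I expect this to be routine bookkeeping once the colours of the six families of boundary vertices are pinned down from the defining inequalities of $H^{K,L,M,N}$.

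For (2) I would use planarity. Because every interior vertex has degree $2$, the directed paths of $\overline{f}$ are pairwise vertex-disjoint, hence non-crossing as curves in the plane. The inequality $\mathsf{l}_\mathsf{T}\,\mathsf{t}\le\mathsf{b}\,\mathsf{r}_\mathsf{B}$ compares, for every prefix length $m$, the number of sources among the first $m$ sites read along the top-left/top arc with the number of sinks among the first $m$ sites read along the bottom/bottom-right arc; note that (1a) is exactly the equal-number-of-ones hypothesis making $\le$ meaningful. Using a nested family of monotone cuts that separate an initial segment of the first arc from the rest of the region, non-crossing forces each such source to be joined to a sink lying weakly later, which is the dominance condition. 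This is the hexagonal analogue of the proofs of $u\le w$ and $v\le w$ for TFPLs in \cite{TFPL}, on which I would model it; the symmetric inequality $\mathsf{t}\,\mathsf{r}_\mathsf{T}\le\mathsf{l}_\mathsf{B}\,\mathsf{b}$ then follows from the first by the vertical-reflection symmetry of oriented HFPLs, since the operation $\omega\mapsto\omega^\ast$ preserves the order $\le$.

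The real obstacle is (3), equivalently $exc(\mathsf{l}_\mathsf{T},\mathsf{t},\mathsf{r}_\mathsf{T};\mathsf{r}_\mathsf{B},\mathsf{b},\mathsf{l}_\mathsf{B})\ge 0$. I would prove this not from the words alone but from the geometry, by establishing a combinatorial interpretation of $exc$ as a count of local configurations, each occurring a non-negative number of times; non-negativity of $exc$ is then immediate. Concretely, I would sweep a cut across the hexagon from the top boundary to the bottom boundary, encode the strands crossing the current cut as a word, and track an inversion statistic of that word. As the cut passes a single vertex the statistic changes by an amount depending only on the local orientation pattern there; summing these local changes telescopes to a boundary contribution that, after the rewriting from the first paragraph, equals $exc$, while each vertex type is checked to contribute a non-negative amount. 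Making this bookkeeping clean is exactly why oriented HFPLs are re-encoded as path-tangles in Section~\ref{Subsec:PathTanglesHFPL}, with the interpretation itself carried out in Section~\ref{Sec:Combinatorial_Interpretation_HFPL}. The delicate points -- forcing the boundary telescoping to produce precisely the inversion differences together with the three corner corrections, and verifying the sign of every admissible local pattern -- are where I expect essentially all of the work to lie.
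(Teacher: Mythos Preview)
Your overall plan correctly identifies that (3) should follow from a non-negative local-configuration count, and you rightly anticipate that an auxiliary encoding (the path-tangles of Section~\ref{Subsec:PathTanglesHFPL}) is what makes the bookkeeping clean. But the paper uses that same encoding for \emph{all three} parts, not just~(3), and this is where your proposal diverges.

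In the paper, the bijection of Theorem~\ref{Thm:Bijection_HFPL_path-tangle} sends an oriented HFPL to a pair $(B,R)$ of families of non-intersecting lattice paths: the blue family $B$ consists of paths running from points $D_k$ indexed by the $0$'s of $\mathsf{b}\,\mathsf{r}_\mathsf{B}$ to points $E_k$ indexed by the $0$'s of $\mathsf{l}_\mathsf{T}\,\mathsf{t}$, and symmetrically for the red family $R$ with the $1$'s of $\mathsf{l}_\mathsf{B}\,\mathsf{b}$ and $\mathsf{t}\,\mathsf{r}_\mathsf{T}$. Part~(1) is then immediate---each colour must have equally many start and end points---and gives (1a) and (1b) \emph{separately}, without needing a second independent relation. (Incidentally, your flow-balance equation is the \emph{sum} (1a)${}+{}$(1b), not the difference.) Part~(2) is a one-line consequence of the step set: each blue path uses only steps $(-1,\pm1)$ and $(-2,0)$, so the index difference $i_k-j_k$ equals the number of down-and-horizontal steps, hence is non-negative; that is exactly $j_k\le i_k$ for all $k$, i.e.\ $\mathsf{l}_\mathsf{T}\,\mathsf{t}\le\mathsf{b}\,\mathsf{r}_\mathsf{B}$.

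Your direct non-crossing argument for (2) has a genuine gap as written: a ``source'' on the top-left/top arc (a $1$ in $\mathsf{l}_\mathsf{T}\,\mathsf{t}$) need not be joined to a ``sink'' on the bottom/bottom-right arc at all---it can terminate at a $T_i$ with in-degree~$1$, or at some $R_{T,i}$---so ``joined to a sink lying weakly later'' has no meaning across the two arcs you are comparing. The blue paths in the tangle are \emph{not} the directed paths of the HFPL; they are a derived construction that repackages the local data of the HFPL into monotone lattice paths, and it is precisely this repackaging that makes the dominance inequality transparent. The analogous TFPL proofs in \cite{TFPL} that you invoke also go through this encoding rather than arguing directly on the FPL paths.

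For (3), your sweep-and-telescope outline is in the right spirit, but the paper's argument is organized differently and does not use a moving cut. Proposition~\ref{Prop:HFPL_Enumeration_Edges} expresses the two partial inversion differences $d(\mathsf{b}\,\mathsf{r}_\mathsf{B})-d(\mathsf{l}_\mathsf{T}\,\mathsf{t})$ and $d(\mathsf{l}_\mathsf{B}\,\mathsf{b})-d(\mathsf{t}\,\mathsf{r}_\mathsf{T})$ as counts of certain oriented edges; Lemma~\ref{Lem:HFPL_Intersecting_pairs} computes the number of intersecting blue/red pairs purely from the boundary words; and Theorem~\ref{Thm:HFPL_Comb_Int} combines these algebraically to write $exc$ as a sum of eight manifestly non-negative local counts. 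The key intermediate quantity---the number of intersecting blue/red pairs---has no counterpart in your sweep picture, and is what allows the corner corrections $\vert\mathsf{l}_\mathsf{T}\vert_1\vert\mathsf{t}\vert_0$, $\vert\mathsf{t}\vert_1\vert\mathsf{r}_\mathsf{T}\vert_0$, $\vert\mathsf{r}_\mathsf{B}\vert_0\vert\mathsf{l}_\mathsf{B}\vert_1$ to fall out without a separate boundary analysis.
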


The first statement of condition (1) is equivalent to $\vert\mathsf{l}_\mathsf{T}\vert_1+\vert\mathsf{t}\vert_1=\vert\mathsf{r}_\mathsf{B}\vert_1+\vert\mathsf{b}\vert_1$ because the concatenations
$\mathsf{l}_\mathsf{T}\,\mathsf{t}$ and $\mathsf{b}\,\mathsf{r}_\mathsf{B}$ both are of length $K+L$.
On the other hand, the second statement is equivalent to $\vert\mathsf{t}\vert_0+\vert\mathsf{r}_\mathsf{T}\vert_0=\vert\mathsf{b}\vert_0+\vert\mathsf{l}_\mathsf{B}\vert_0$ since the concatenations 
$\mathsf{t}\,\mathsf{r}_\mathsf{T}$ and $\mathsf{l}_\mathsf{B}\,\mathsf{b}$ both
are of length $L+M$. In Section~\ref{Sec:PathTanglesHFPL}, a proof of Theorem~\ref{Thm:NecCondHFPL} using a model bijective to oriented HFPLs is given. It is done in an analogous way as for oriented TFPLs 
in \cite{TFPL}. \\

There is a natural injection from $H_{\mathsf{l}_\mathsf{T},\mathsf{t},\mathsf{r}_\mathsf{T}}^{\mathsf{r}_\mathsf{B},\mathsf{b},\mathsf{l}_\mathsf{B}}$ to 
$\overrightarrow{H}_{\mathsf{l}_\mathsf{T},\mathsf{t},\mathsf{r}_\mathsf{T}}^{\mathsf{r}_\mathsf{B},\mathsf{b},\mathsf{l}_\mathsf{B}}$: given an HFPL in 
$H_{\mathsf{l}_\mathsf{T},\mathsf{t},\mathsf{r}_\mathsf{T}}^{\mathsf{r}_\mathsf{B},\mathsf{b},\mathsf{l}_\mathsf{B}}$,
orient all its closed paths clockwise, each path connecting two vertices $B_i,B_j$ in $\mathcal{B}$ from $B_i$ to $B_j$, if $i<j$,
each path connecting two vertices $T_i,T_j$ in $\mathcal{T}$ from $T_i$ to $T_j$, if $i<j$, and each path connecting a vertex $B_i$ in $\mathcal{B}$ and a vertex $T_j$ in
$\mathcal{T}$ from $B_i$ to $T_j$. The other paths have a forced orientation by the definition of oriented HFPLs. Note that the chosen orientation ensures that $\mathsf{b}$ is indeed the bottom boundary word of the
resulting oriented HFPL, respectively $\mathsf{t}$ the top boundary word. Thus, this is an injection from $H_{\mathsf{l}_\mathsf{T},\mathsf{t},\mathsf{r}_\mathsf{T}}^{\mathsf{r}_\mathsf{B},\mathsf{b},\mathsf{l}_\mathsf{B}}$ to 
$\overrightarrow{H}_{\mathsf{l}_\mathsf{T},\mathsf{t},\mathsf{r}_\mathsf{T}}^{\mathsf{r}_\mathsf{B},\mathsf{b},\mathsf{l}_\mathsf{B}}$ and therefore it holds

\begin{equation}
h_{\mathsf{l}_\mathsf{T},\mathsf{t},\mathsf{r}_\mathsf{T}}^{\mathsf{r}_\mathsf{B},\mathsf{b},\mathsf{l}_\mathsf{B}}\leq\overrightarrow{h}_{\mathsf{l}_\mathsf{T},\mathsf{t},\mathsf{r}_\mathsf{T}}^{\mathsf{r}_\mathsf{B},\mathsf{b},\mathsf{l}_\mathsf{B}}
\label{Eq:HFPLcontainedinOHFPL}
\end{equation}
for any $\mathsf{l}_\mathsf{T},\mathsf{t},\mathsf{r}_\mathsf{T},\mathsf{r}_\mathsf{B},\mathsf{b},\mathsf{l}_\mathsf{B}$. In the other direction, with each oriented HFPL a non-oriented HFPL can be associated by 
ignoring the orientation of the edges. This operation does not preserve
the bottom and the top words in general. In Section~\ref{Sec:Recovering_HFPLs_from_oriented_HFPLs}, it is shown how to deduce the number 
$h_{\mathsf{l}_\mathsf{T},\mathsf{t},\mathsf{r}_\mathsf{T}}^{\mathsf{r}_\mathsf{B},\mathsf{b},\mathsf{l}_\mathsf{B}}$ from a certain 
weighted enumeration of oriented HFPLs.
From (\ref{Eq:HFPLcontainedinOHFPL}) the following corollary of Theorem~\ref{Thm:NecCondHFPL} is obtained immediately:

\begin{Cor}
The conclusions of Theorem~\ref{Thm:NecCondHFPL} hold if $h_{\mathsf{l}_\mathsf{T},\mathsf{t},\mathsf{r}_\mathsf{T}}^{\mathsf{r}_\mathsf{B},\mathsf{b},\mathsf{l}_\mathsf{B}}>0$.
\end{Cor}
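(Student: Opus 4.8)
The plan is to obtain this corollary directly from Theorem~\ref{Thm:NecCondHFPL} by chaining it with the inequality~(\ref{Eq:HFPLcontainedinOHFPL}). The one substantive ingredient is already in place in the discussion preceding~(\ref{Eq:HFPLcontainedinOHFPL}): the natural assignment that orients each closed path of an ordinary HFPL clockwise, orients a path joining $B_i,B_j\in\mathcal{B}$ from $B_i$ to $B_j$ when $i<j$, orients a path joining $T_i,T_j\in\mathcal{T}$ from $T_i$ to $T_j$ when $i<j$, orients a path from a bottom vertex $B_i$ to a top vertex $T_j$ from $B_i$ to $T_j$, and leaves all remaining orientations forced by the oriented-HFPL conditions, is an injection from $H_{\mathsf{l}_\mathsf{T},\mathsf{t},\mathsf{r}_\mathsf{T}}^{\mathsf{r}_\mathsf{B},\mathsf{b},\mathsf{l}_\mathsf{B}}$ into $\overrightarrow{H}_{\mathsf{l}_\mathsf{T},\mathsf{t},\mathsf{r}_\mathsf{T}}^{\mathsf{r}_\mathsf{B},\mathsf{b},\mathsf{l}_\mathsf{B}}$ that \emph{preserves the boundary sextuple}. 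Boundary-preservation is exactly the feature needed here, since it ensures that an ordinary HFPL with a given boundary yields an oriented HFPL with the same boundary, so the two counts $h$ and $\overrightarrow{h}$ compared in~(\ref{Eq:HFPLcontainedinOHFPL}) carry identical boundary data.

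First I would record that if $h_{\mathsf{l}_\mathsf{T},\mathsf{t},\mathsf{r}_\mathsf{T}}^{\mathsf{r}_\mathsf{B},\mathsf{b},\mathsf{l}_\mathsf{B}}>0$, then inequality~(\ref{Eq:HFPLcontainedinOHFPL}) gives $\overrightarrow{h}_{\mathsf{l}_\mathsf{T},\mathsf{t},\mathsf{r}_\mathsf{T}}^{\mathsf{r}_\mathsf{B},\mathsf{b},\mathsf{l}_\mathsf{B}}\geq h_{\mathsf{l}_\mathsf{T},\mathsf{t},\mathsf{r}_\mathsf{T}}^{\mathsf{r}_\mathsf{B},\mathsf{b},\mathsf{l}_\mathsf{B}}>0$. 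The hypothesis of Theorem~\ref{Thm:NecCondHFPL} is thereby met, and applying that theorem verbatim yields conditions~(1), (2), and~(3) for the sextuple $(\mathsf{l}_\mathsf{T},\mathsf{t},\mathsf{r}_\mathsf{T};\mathsf{r}_\mathsf{B},\mathsf{b},\mathsf{l}_\mathsf{B})$. Since those three conditions are precisely the conclusions of Theorem~\ref{Thm:NecCondHFPL}, the corollary follows. Equivalently, one can phrase this contrapositively: whenever any of~(1)--(3) fails, Theorem~\ref{Thm:NecCondHFPL} forces $\overrightarrow{h}=0$, and then~(\ref{Eq:HFPLcontainedinOHFPL}) squeezes $h=0$ as well.

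There is essentially no obstacle in the corollary itself; the entire difficulty is already absorbed into Theorem~\ref{Thm:NecCondHFPL} (established later through the path-tangle model) and into the construction of the boundary-preserving injection. The only point I would double-check while writing is that the prescribed orientations of the bottom-to-bottom, top-to-top, and bottom-to-top paths really reproduce $\mathsf{b}$ and $\mathsf{t}$ under the in-degree/out-degree rules of the oriented boundary, i.e.\ that the left-to-right conventions in Definition~\ref{Def:BoundaryHFPL} match those in the oriented boundary definition. This compatibility is what makes the map land in $\overrightarrow{H}_{\mathsf{l}_\mathsf{T},\mathsf{t},\mathsf{r}_\mathsf{T}}^{\mathsf{r}_\mathsf{B},\mathsf{b},\mathsf{l}_\mathsf{B}}$ rather than in some neighbouring boundary class, and it is the sole place where a direction convention could slip; once it is confirmed, the corollary is immediate.
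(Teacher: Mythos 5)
Your argument is exactly the paper's: the inequality~(\ref{Eq:HFPLcontainedinOHFPL}), coming from the boundary-preserving injection of ordinary HFPLs into oriented HFPLs, gives $\overrightarrow{h}_{\mathsf{l}_\mathsf{T},\mathsf{t},\mathsf{r}_\mathsf{T}}^{\mathsf{r}_\mathsf{B},\mathsf{b},\mathsf{l}_\mathsf{B}}\geq h_{\mathsf{l}_\mathsf{T},\mathsf{t},\mathsf{r}_\mathsf{T}}^{\mathsf{r}_\mathsf{B},\mathsf{b},\mathsf{l}_\mathsf{B}}>0$, and Theorem~\ref{Thm:NecCondHFPL} then applies directly. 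This matches the paper's (immediate) derivation, and your extra care about the orientation conventions preserving $\mathsf{b}$ and $\mathsf{t}$ is the same point the paper notes when constructing the injection.
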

 
To each oriented HFPL a pair $(\overrightarrow{\pi_b},\overrightarrow{\pi_t})$ of directed extended link patterns is assigned in the natural way. 
\begin{figure}[tbh]
\includegraphics[width=1\textwidth]{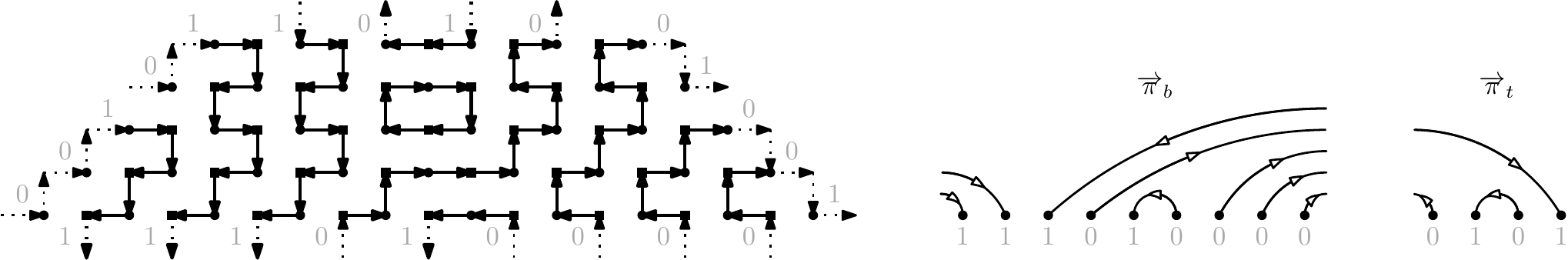}
\caption{An oriented HFPL of size $(5,4,5,0)$ together with its two corresponding extended link patterns.}
\label{Fig:Oriented_HFPL_Example_5_4_5_0}
\end{figure}
In Figure~\ref{Fig:Oriented_HFPL_Example_5_4_5_0}, an example of an oriented HFPL and its assigned pair of directed extended link patterns is given. 
Note that $\overrightarrow{\pi_b}$ has to be left-hand-incoming and $\overrightarrow{\pi_t}$ has to be right-hand-outgoing. Furthermore, 
for any oriented HFPL in $\overrightarrow{H}_{\mathsf{l}_\mathsf{T},\mathsf{t},\mathsf{r}_\mathsf{T}}^{\mathsf{r}_\mathsf{B},\mathsf{b},\mathsf{l}_\mathsf{B}}$ the source-sink-word of $\overrightarrow{\pi_b}$ equals 
$\mathsf{b}$ and the source-sink-word of $\overrightarrow{\pi_t}$ equals $\overline{\mathsf{t}}$.\\
 
\section{Recovering HPFLs from oriented HFPLs}\label{Sec:Recovering_HFPLs_from_oriented_HFPLs}
In this section the interplay between HFPLs and oriented HFPLs is studied. It is done analogous to the study of the interplay between TFPLs and
oriented TFPLs in \cite{TFPL}. For that reason, only a rough overview will be given.

\subsection{The weighted enumeration of oriented HFPLs} 
A step is said to be of type \textbf{u} if it is a 
$(0,1)$-step, of type \textbf{l} if it is a $(-1,0)$-step and of type \textbf{d} if it is a $(0,-1)$-step. Furthermore, a turn is said to be of type \textbf{dl} if it consists of a step of type \textbf{d} that is 
suceeded by a step of type \textbf{l}, of type \textbf{lu} if it consists of a step of type \textbf{l} that is suceeded by a step of type \textbf{u}, of type \textbf{ld} if it consists of a step of type \textbf{l} that 
is suceeded by a step of type \textbf{d} and of type \textbf{ul} if it consist of a step of type \textbf{u} that is suceeded by a step of type \textbf{l}. In the following, set 
$R=\{\textbf{dl},\textbf{lu}\}$ and $L=\{\textbf{ld},\textbf{ul}\}$.\\

\begin{figure}[tbh]
\includegraphics[width=.15\textwidth]{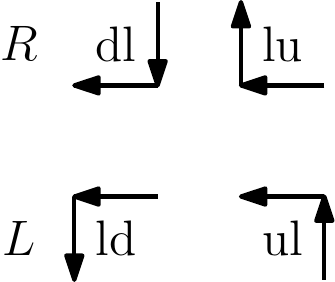}
\caption{The four types of turns.}
\end{figure}

In the following, fix a turn $t_{\circlearrowleft}\in L$ and let $t_{\circlearrowright}$ be the turn in $R$ that is obtained by swapping the two steps in $t_{\circlearrowleft}$. Furthermore,
the number of occurrences of turns of type $t_{\circlearrowleft}$ (respectively of type $t_{\circlearrowright}$) in $\overline{f}$ 
is denoted by $t_{\circlearrowleft}(\overline{f})$ (respectively $t_{\circlearrowright}(\overline{f})$), where $f$ is an oriented HFPL.
The difference $t_{\circlearrowleft}(\overline{f})-t_{\circlearrowright}(\overline{f})$ has the following global interpretation:

\begin{Prop}\label{Prop:InterpretationTurnsHFPLs}
Let $f$ be an oriented HFPL and $\overrightarrow{\pi_b}$ and $\overrightarrow{\pi_t}$ be the two directed extended link patterns that are associated with $f$.
Furthermore, set $RL_b(f)=RL(\overrightarrow{\pi_b})$, $RL_t(f)=RL(\overrightarrow{\pi_t})$ and denote by $N^{\circlearrowright}(f)$ (respectively $N^{\circlearrowleft}(f)$) the number of closed paths in $f$ that are oriented clockwise
(respectively counter-clockwise). Then
\begin{equation}
t_{\circlearrowleft}(\overline{f})-t_{\circlearrowright}(\overline{f})=RL_b(f)-RL_t(f)+N^{\circlearrowleft}(f)-N^{\circlearrowright}(f).
\notag\end{equation}
\end{Prop}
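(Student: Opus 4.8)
The plan is to use that $t_{\circlearrowleft}(\overline{f})-t_{\circlearrowright}(\overline{f})$ is additive over the directed components of $\overline{f}$. After the directed external edges have been attached, every vertex of $\overline{f}$ has in-degree and out-degree $1$, so $\overline{f}$ decomposes into directed closed cycles together with directed open paths, each running from one external edge to another. Writing $\delta(C)=t_{\circlearrowleft}(C)-t_{\circlearrowright}(C)$ for a component $C$, it suffices to evaluate $\delta(C)$ on cycles and on open paths separately and then to sum. Note that $\overline{f}$ necessarily contains $(1,0)$-steps (a closed cycle made only of \textbf{u}-, \textbf{l}- and \textbf{d}-steps would have non-positive horizontal displacement and could not close up), so these unnamed steps, and the turns incident to them, must be carried along as auxiliary data.

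For a single closed cycle $C$ I would combine run-counting with the rotation number. Calling a maximal block of consecutive equal steps a \emph{run}, one classifies the maximal runs of \textbf{u}-steps by their entering and their leaving step: each \textbf{u}-run is entered by an \textbf{l}-step (an \textbf{lu}-turn) or a $(1,0)$-step, and left by an \textbf{l}-step (a \textbf{ul}-turn) or a $(1,0)$-step. Equating the two counts of \textbf{u}-runs gives $\mathbf{ul}(C)-\mathbf{lu}(C)=(\text{leaving-right})-(\text{entering-right})$. The analogous bookkeeping for runs of \textbf{l}-, \textbf{d}- and $(1,0)$-steps shows that all four ``quarter-turn differences'' $\mathbf{ul}(C)-\mathbf{lu}(C)$, $\mathbf{ld}(C)-\mathbf{dl}(C)$ and the two involving $(1,0)$-steps are equal to one common value $\delta(C)$. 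On the other hand, the signed total turning of a simple closed lattice cycle is $+4$ quarter-turns if it is oriented counter-clockwise and $-4$ if clockwise; since that total turning is exactly the sum of those four quarter-turn differences, it equals $4\,\delta(C)$. Hence $\delta(C)=+1$ for a counter-clockwise cycle and $\delta(C)=-1$ for a clockwise one, and summing over all cycles contributes precisely $N^{\circlearrowleft}(f)-N^{\circlearrowright}(f)$. This argument is visibly insensitive to whether $t_{\circlearrowleft}=\mathbf{ld}$ or $t_{\circlearrowleft}=\mathbf{ul}$, as the statement demands.

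For an open path $P$ the same run-counting applies, except that its first and last runs are not closed off, so each run-counting identity acquires a correction term determined solely by the first and last steps of $P$, that is, by the directions of the two external edges at which $P$ begins and ends; likewise the total turning of $P$ is the net rotation from its initial to its final direction in place of $\pm4$. Combining these exactly as in the cyclic case expresses $\delta(P)$ purely in terms of the types of the two external half-edges of $P$. I would then classify the open paths according to which boundary segment of $H^{K,L,M,N}$ their endpoints lie on and in which direction the external edges point, using the orientation conventions of $\overline{f}$ (horizontal-in/vertical-out along $\mathcal{L}_T\cup\mathcal{L}_B$, horizontal-out/vertical-in along $\mathcal{R}_T\cup\mathcal{R}_B$). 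A direct check of the cases should show that a path realizing a bottom arch $\{i,j\}\in\pi_b$ contributes $\delta(P)=+1$ exactly when its larger endpoint is the source, i.e.\ when $\{i,j\}\in RL(\overrightarrow{\pi_b})$, and $0$ otherwise; symmetrically, a path realizing a top arch contributes $\delta(P)=-1$ exactly when the arch lies in $RL(\overrightarrow{\pi_t})$; and all remaining paths (half-arches to the left or right boundary, and paths joining the top to the bottom) contribute $0$. Summing yields $\sum_P\delta(P)=RL_b(f)-RL_t(f)$, which together with the cycle contribution gives the claimed identity.

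The main obstacle is this last, case-by-case step: correctly determining $\delta(P)$ for every admissible pair of external-edge directions and reconciling the four distinct orientation conventions along the six boundary segments, so that the open-path contributions collapse exactly into $RL_b(f)-RL_t(f)$ with the correct signs. The cycle computation and the run-counting identities are robust and largely mechanical; it is the endpoint bookkeeping for the open paths, which runs parallel to the corresponding argument for oriented TFPLs in \cite{TFPL}, where all the care is required.
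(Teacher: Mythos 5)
Your decomposition of $\overline{f}$ into directed cycles and open paths, and your treatment of the closed cycles, are sound: the run-counting identities do show that the four quarter-turn differences agree on a closed cycle, and combined with the total turning $\pm 4$ of a simple closed lattice curve this recovers the fact (which the paper simply cites from \cite{TFPL}) that $t_{\circlearrowleft}(C)-t_{\circlearrowright}(C)$ is $+1$ or $-1$ according to orientation. The gap is in the open-path step. For an open path the run-counting identities only determine the \emph{pairwise differences} of the four quarter-turn counts in terms of the first and last step types; the remaining common offset is fixed by the total turning, and the total turning of an open self-avoiding path is \emph{not} ``the net rotation from its initial to its final direction'': it is that quantity plus an integer number of full turns that depends on the global position of the path, not on its two external half-edges. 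Hence $\delta(P)$ is not ``determined solely by the first and last steps of $P$'', contrary to what you assert. This is not a removable technicality: a path of $\overline{f}$ joining two vertices $T_i,T_j\in\mathcal{T}$ always begins with a step of type \textbf{d} and ends with a step of type \textbf{u}, whichever way it is oriented, yet it must contribute $0$ when oriented from $T_i$ to $T_j$ with $i<j$ and $-1$ when oriented the other way (compare the minimal examples \textbf{d},\textbf{r},\textbf{u} with total turning $+2$ and $\delta=0$, versus \textbf{d},\textbf{l},\textbf{u} with total turning $-2$ and $\delta=-1$). Your proposed bookkeeping cannot tell these two cases apart.

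What is missing is exactly the topological input that the paper's proof supplies: each open path is completed to a simple closed self-avoiding lattice curve by a detour outside the hexagon (above the configuration for top--top and top--side paths, to the right for bottom--top paths, and so on) with the least possible number of turns; the closed-curve result then fixes the total turning of the completed curve as $+4$ or $-4$ according to whether the completion is counter-clockwise or clockwise -- and this \emph{does} see the relative order of the two endpoints along the boundary -- after which subtracting the turns of the added detour yields $\delta(P)$. If you replace your unjustified total-turning claim by this completion argument, the remainder of your case analysis (bottom arches in $RL(\overrightarrow{\pi_b})$ give $+1$, top arches in $RL(\overrightarrow{\pi_t})$ give $-1$, all other open paths give $0$) agrees with the paper and the proof closes.
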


\begin{proof}
The proof of Proposition~\ref{Prop:InterpretationTurnsHFPLs} is a generalization of the proof of Proposition~2.4 in \cite{TFPL}. Essential for the proof is the following assertion that is given in \cite[Corollary~2.3]{TFPL}:
\textit{for all directed closed self-avoiding paths $p$ on the square lattice, $t_{\circlearrowleft}(p)-t_{\circlearrowright}(p)$ equals -1 (resp. 1) if $p$ is oriented clockwise (resp. counter-clockwise).}
Here, $t_{\circlearrowleft}(p)$ denotes the number of occurrences of turns of type $t_{\circlearrowleft}$ in $p$, respectively $t_{\circlearrowright}(p)$ the number of occurrences of turns of type
$t_{\circlearrowright}$ in $p$. It remains to evaluate $t_{\circlearrowright}(p)-t_{\circlearrowleft}(p)$ for the non-closed paths $p$ in $\overline{f}$. 
In the following, the external edges are considered part of the non-closed paths.

As a start, let $p$ be a non-closed path in $\overline{f}$ that connects two vertices in $\mathcal{T}$, see Figure~\ref{Fig:Closure_Path_HFPL} in a particular case. 
Then $p$ starts with a step of type \textbf{d} and ends with a step of type \textbf{u}. 
Now, $p$ is completed to a closed self-avoiding path $p'$ on the square lattice by adding a path above the configuration with the least possible number of turns. 
If $p$ goes from $T_j$ to $T_i$ with $i<j$, then $p'$ is oriented clockwise and it follows that $-1=t_{\circlearrowleft}(p)-t_{\circlearrowright}(p)$. 
On the other hand, if $p$ goes from $T_i$ to $T_j$ with $i<j$, 
then $p'$ is oriented counter-clockwise and it follows that $t_{\circlearrowleft}(p)-t_{\circlearrowright}(p)=0$. 

\begin{figure}[tbh]
\includegraphics[width=.4\textwidth]{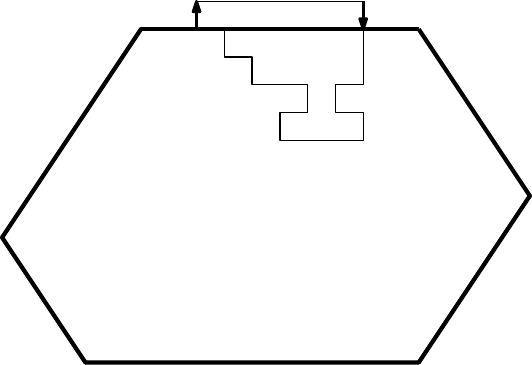}
\caption{Closure of a path in an oriented HFPL.}
\label{Fig:Closure_Path_HFPL}
\end{figure}

Next, let $p$ be a non-closed path in $\overline{f}$ that connects a vertex in $\mathcal{B}$ and a vertex in $\mathcal{T}$. In that case, $p$ starts with a step of type \textbf{u} and ends with a step of type \textbf{d}
or vice versa. The non-closed path $p$ is completed to a closed self-avoiding path $p'$ on the square lattice by adding a path to the right of $\overline{f}$ with the least possible number of turns. 
If $p$ is oriented from the vertex in $\mathcal{B}$ to the vertex in $\mathcal{T}$, then $p'$ is oriented clockwise and therefore 
$t_{\circlearrowleft}(p)-t_{\circlearrowright}(p)=0$. On the other hand if $p$ is oriented from the vertex in $\mathcal{T}$ to the 
vertex in $\mathcal{B}$ then $p'$ is oriented counter-clockwise and one obtains again $t_{\circlearrowleft}(p)-t_{\circlearrowright}(p)=0$.

Next, let $p$ be a non-closed path in $\overline{f}$ that goes from a vertex in $\mathcal{T}$ to a vertex in $\mathcal{R}_T\cup\mathcal{R}_B$. 
In that case $p$ starts with a step of type \textbf{d} and ends with a step
of type \textbf{r}. Now, $p$ is completed to a closed self-avoiding path $p'$ by adding a path above $\overline{f}$ with the least possible number of turns.
Then $p'$ is oriented counter-clockwise and therefore 
$t_{\circlearrowleft}(p)-t_{\circlearrowright}(p)=0$. The difference also vanishes if $p$ goes from a vertex in $\mathcal{T}$ to a 
vertex in $\mathcal{L}_B\cup\mathcal{L}_T$.\\

Finally, $t_{\circlearrowright}(p)-t_{\circlearrowleft}(p)=1$ if $p$ goes from a vertex $B_j$ and $B_i$ with $i<j$ and $t_{\circlearrowright}(p)-t_{\circlearrowleft}(p)=0$ if $p$ goes from a vertex $B_i$ to a vertex 
$B_j$ with $i<j$ or to
a vertex in $\mathcal{L}_B\cup \mathcal{L}_T\cup\mathcal{R}_B\cup\mathcal{R}_T$ 
or if $p$ goes from a vertex in $\mathcal{L}_B\cup \mathcal{L}_T$ to a vertex in $\mathcal{R}_B\cup\mathcal{R}_T$ 
by Proposition~2.4 in \cite{TFPL}.
\end{proof}
In particular, Proposition~\ref{Prop:InterpretationTurnsHFPLs}
implies that the numbers $t_{\circlearrowleft}(\overline{f})-t_{\circlearrowright}(\overline{f})$ for any oriented HFPL $f$ do not depend on the choice of $t_{\circlearrowleft}\in L$.

\begin{Prop}\label{Prop:Weights_HFPL}
Let $f$ be an oriented HFPL. Then 
\begin{equation}
\vcenter{\hbox{\includegraphics[width=.05\textwidth]{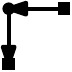}}}+\vcenter{\hbox{\includegraphics[width=.05\textwidth]{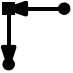}}}+
-\vcenter{\hbox{\includegraphics[width=.05\textwidth]{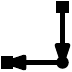}}}-\vcenter{\hbox{\includegraphics[width=.05\textwidth]{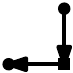}}}=
\vcenter{\hbox{\includegraphics[width=.05\textwidth]{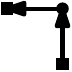}}}+ \vcenter{\hbox{\includegraphics[width=.05\textwidth]{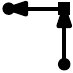}}}
-\vcenter{\hbox{\includegraphics[width=.05\textwidth]{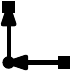}}}-\vcenter{\hbox{\includegraphics[width=.05\textwidth]{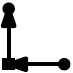}}}.
\notag\end{equation}
Here, $\vcenter{\hbox{\includegraphics[width=.05\textwidth]{Int_Pairs_3}}}$, etc. denote the number of occurrences of the local configurations $\vcenter{\hbox{\includegraphics[width=.05\textwidth]{Int_Pairs_3}}}$, etc.
\end{Prop}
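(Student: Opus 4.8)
The plan is to prove this as an identity counting local configurations at the degree-2 vertices of $\overline{f}$, exactly as in the analogous step for oriented TFPLs in \cite{TFPL}. The eight pictured configurations are the possible local patterns around a vertex that carries a horizontal edge paired with a vertical edge (i.e.\ the ``corner'' or turn-type vertices), decorated with a specific orientation. The key observation is that each such local configuration is precisely one of the four turn types \textbf{dl}, \textbf{lu}, \textbf{ld}, \textbf{ul} read off from the oriented path passing through that vertex. Thus the left-hand side and right-hand side of the claimed identity are two different ways of bookkeeping the same collection of turns, grouped according to which of the two turns in a chosen pair $\{t_{\circlearrowleft},t_{\circlearrowright}\}$ they realize.

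First I would fix, for each of the eight pictures, which turn type it corresponds to when the path is traversed in the direction indicated by the orientation; this is a finite case-check against the definitions of type \textbf{u}, \textbf{l}, \textbf{d} steps and of the turns in $R=\{\textbf{dl},\textbf{lu}\}$ and $L=\{\textbf{ld},\textbf{ul}\}$ given just above. Having made that identification, both sides of the equation become signed sums $t_{\circlearrowleft}(\overline{f})-t_{\circlearrowright}(\overline{f})$ for two \emph{different} choices of the pair $(t_{\circlearrowleft},t_{\circlearrowright})$: the left-hand side groups the turns using one element of $L$ together with its swap in $R$, while the right-hand side uses the other element of $L$ together with its swap. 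Concretely, the left side computes the difference for $t_{\circlearrowleft}=\textbf{ul}$ (say) and the right side for $t_{\circlearrowleft}=\textbf{ld}$, or vice versa depending on how the pictures are labelled.

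The proof then concludes by invoking the remark immediately following Proposition~\ref{Prop:InterpretationTurnsHFPLs}: the difference $t_{\circlearrowleft}(\overline{f})-t_{\circlearrowright}(\overline{f})$ does \emph{not} depend on the choice of $t_{\circlearrowleft}\in L$, since by that proposition it equals the choice-independent quantity $RL_b(f)-RL_t(f)+N^{\circlearrowleft}(f)-N^{\circlearrowright}(f)$. Hence the two signed sums coincide, which is exactly the asserted identity after transposing the negative terms across the equality. I would present this cleanly by first stating the turn-type identification of each picture in a short table or sentence, then writing both sides as the corresponding $t_{\circlearrowleft}(\overline{f})-t_{\circlearrowright}(\overline{f})$, and finally citing the choice-independence.

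The main obstacle is purely one of careful bookkeeping rather than of mathematical depth: one must get the orientation conventions and the sign of each pictured local configuration exactly right, so that each of the eight pictures is matched to the correct turn type and appears on the correct side with the correct sign. In particular, because the external edges along the boundary are included in $\overline{f}$ (and carry forced orientations), some of these local configurations occur at boundary vertices, and I would double-check that the boundary contributions are counted consistently on both sides. Once the picture-to-turn dictionary is pinned down, the identity is immediate from the choice-independence of $t_{\circlearrowleft}(\overline{f})-t_{\circlearrowright}(\overline{f})$.
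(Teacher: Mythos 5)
Your proposal matches the paper's (implicit) justification: Proposition~\ref{Prop:Weights_HFPL} is stated without proof immediately after the remark that $t_{\circlearrowleft}(\overline{f})-t_{\circlearrowright}(\overline{f})$ is independent of the choice of $t_{\circlearrowleft}\in L$, and the displayed identity is exactly that independence written out in terms of the pictured oriented local configurations. Your plan --- pin down the picture-to-turn dictionary, rewrite each side of the identity as $t_{\circlearrowleft}(\overline{f})-t_{\circlearrowright}(\overline{f})$ for one of the two admissible choices of $t_{\circlearrowleft}$, and then invoke the choice-independence coming from Proposition~\ref{Prop:InterpretationTurnsHFPLs} --- is precisely the intended argument.
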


This motivates the following weighted enumeration of oriented HFPLs: let
$t_{\circlearrowleft}\in L$ and $t_{\circlearrowright}\in R$ be the turn, that is obtained by swapping the two steps in $t_{\circlearrowleft}$. Define

\begin{equation}
\overrightarrow{h}_{\mathsf{l}_\mathsf{T},\mathsf{t},\mathsf{r}_\mathsf{T}}^{\mathsf{r}_\mathsf{B},\mathsf{b},\mathsf{l}_\mathsf{B}}(q)=\sum\limits_{f\in\overrightarrow{H}_{\mathsf{l}_\mathsf{T},\mathsf{t},\mathsf{r}_\mathsf{T}}^{\mathsf{r}_\mathsf{B},\mathsf{b},\mathsf{l}_\mathsf{B}}}q^{t_{\circlearrowleft}(\overline{f})-t_{\circlearrowright}(\overline{f})}=
\sum\limits_{f\in\overrightarrow{H}_{\mathsf{l}_\mathsf{T},\mathsf{t},\mathsf{r}_\mathsf{T}}^{\mathsf{r}_\mathsf{B},\mathsf{b},\mathsf{l}_\mathsf{B}}}q^{RL_b(f)-RL_t(f)}q^{N^{\circlearrowleft}(f)-N^{\circlearrowright}(f)}.
\label{Eq:WeightedEnumerationHFPLs} 
\end{equation}

\subsection{Deriving the number of ordinary HFPLs from the weighted enumeration of oriented HFPLs} The goal of this subsection is to extract the number of HFPLs with boundary 
$(\mathsf{l}_\mathsf{T},\mathsf{t},\mathsf{r}_\mathsf{T};\mathsf{r}_\mathsf{B},\mathsf{b},\mathsf{l}_\mathsf{B})$
from the weighted enumeration of 
oriented HFPLs in (\ref{Eq:WeightedEnumerationHFPLs}). For that purpose, let $\overline{H}_{\mathsf{l}_\mathsf{T},\mathsf{t},\mathsf{r}_\mathsf{T}}^{\mathsf{r}_\mathsf{B},\mathsf{b},\mathsf{l}_\mathsf{B}}$
denote the subset of $\overrightarrow{H}_{\mathsf{l}_\mathsf{T},\mathsf{t},\mathsf{r}_\mathsf{T}}^{\mathsf{r}_\mathsf{B},\mathsf{b},\mathsf{l}_\mathsf{B}}$ that is made up of those oriented TFPLs 
whose associated directed link patterns $\overrightarrow{\pi_b}$ and $\overrightarrow{\pi_t}$ verify $RL(\overrightarrow{\pi_b})=0$ and $RL(\overrightarrow{\pi_t})=0$. 
Furthermore, let $\overline{h}_{\mathsf{l}_\mathsf{T},\mathsf{t},\mathsf{r}_\mathsf{T}}^{\mathsf{r}_\mathsf{B},\mathsf{b},\mathsf{l}_\mathsf{B}}(q)$
be the corresponding weighted enumeration, cf. (\ref{Eq:WeightedEnumerationHFPLs}). The following lemma relates $h_{\mathsf{l}_\mathsf{T},\mathsf{t},\mathsf{r}_\mathsf{T}}^{\mathsf{r}_\mathsf{B},\mathsf{b},\mathsf{l}_\mathsf{B}}$
to $\overline{h}_{\mathsf{l}_\mathsf{T},\mathsf{t},\mathsf{r}_\mathsf{T}}^{\mathsf{r}_\mathsf{B},\mathsf{b},\mathsf{l}_\mathsf{B}}(q)$:

\begin{Lemma}\label{Lemma:HFPL_Weighted_Enumeration_Without_RL}
Let $\rho$ be a primitive sixth root of unity, so that $\rho$ satisfies $\rho+1/\rho=1$. Then 
\begin{equation}
 \overline{h}_{\mathsf{l}_\mathsf{T},\mathsf{t},\mathsf{r}_\mathsf{T}}^{\mathsf{r}_\mathsf{B},\mathsf{b},\mathsf{l}_\mathsf{B}}(\rho)=h_{\mathsf{l}_\mathsf{T},\mathsf{t},\mathsf{r}_\mathsf{T}}^{\mathsf{r}_\mathsf{B},\mathsf{b},\mathsf{l}_\mathsf{B}}.
\notag\end{equation}
\end{Lemma}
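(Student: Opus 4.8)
The plan is to exploit the product factorization already displayed in equation~(\ref{Eq:WeightedEnumerationHFPLs}), namely that for any oriented HFPL $f$ one has $q^{t_{\circlearrowleft}(\overline{f})-t_{\circlearrowright}(\overline{f})}=q^{RL_b(f)-RL_t(f)}\,q^{N^{\circlearrowleft}(f)-N^{\circlearrowright}(f)}$, which is Proposition~\ref{Prop:InterpretationTurnsHFPLs}. Restricting the sum in (\ref{Eq:WeightedEnumerationHFPLs}) to the subset $\overline{H}_{\mathsf{l}_\mathsf{T},\mathsf{t},\mathsf{r}_\mathsf{T}}^{\mathsf{r}_\mathsf{B},\mathsf{b},\mathsf{l}_\mathsf{B}}$ forces $RL_b(f)=RL(\overrightarrow{\pi_b})=0$ and $RL_t(f)=RL(\overrightarrow{\pi_t})=0$, so on this subset the weight collapses to $q^{N^{\circlearrowleft}(f)-N^{\circlearrowright}(f)}$, the exponent depending only on the closed-path orientations. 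Hence
\begin{equation}
\overline{h}_{\mathsf{l}_\mathsf{T},\mathsf{t},\mathsf{r}_\mathsf{T}}^{\mathsf{r}_\mathsf{B},\mathsf{b},\mathsf{l}_\mathsf{B}}(q)=\sum_{f\in\overline{H}_{\mathsf{l}_\mathsf{T},\mathsf{t},\mathsf{r}_\mathsf{T}}^{\mathsf{r}_\mathsf{B},\mathsf{b},\mathsf{l}_\mathsf{B}}}q^{N^{\circlearrowleft}(f)-N^{\circlearrowright}(f)}.
\notag
\end{equation}

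Next I would set up the fibers of the forgetful map that drops the orientation. Fix an ordinary HFPL $g\in H_{\mathsf{l}_\mathsf{T},\mathsf{t},\mathsf{r}_\mathsf{T}}^{\mathsf{r}_\mathsf{B},\mathsf{b},\mathsf{l}_\mathsf{B}}$ and ask which elements of $\overline{H}_{\mathsf{l}_\mathsf{T},\mathsf{t},\mathsf{r}_\mathsf{T}}^{\mathsf{r}_\mathsf{B},\mathsf{b},\mathsf{l}_\mathsf{B}}$ have underlying unoriented graph $g$. The key claim, generalizing the TFPL argument of \cite{TFPL}, is that all non-closed paths of $g$ receive a forced orientation once we demand the boundary $(\mathsf{l}_\mathsf{T},\mathsf{t},\mathsf{r}_\mathsf{T};\mathsf{r}_\mathsf{B},\mathsf{b},\mathsf{l}_\mathsf{B})$ together with the constraints $RL_b=RL_t=0$: the left vertices are sources and the right vertices are sinks, and the $RL=0$ condition means no arch of $\overrightarrow{\pi_b}$ or $\overrightarrow{\pi_t}$ runs from a larger to a smaller source, which pins down the orientation of each boundary-to-boundary path exactly as the natural injection preceding (\ref{Eq:HFPLcontainedinOHFPL}) does. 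Thus the only freedom lies in the closed paths: if $g$ has $c$ closed paths, each may be oriented clockwise or counter-clockwise independently, so the fiber over $g$ has $2^{c}$ elements, and summing the weight over the fiber gives $\prod_{i=1}^{c}(q+q^{-1})=(q+1/q)^{c}$, since each closed path contributes $q^{+1}$ or $q^{-1}$ to $N^{\circlearrowleft}-N^{\circlearrowright}$.

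Combining the two observations yields
\begin{equation}
\overline{h}_{\mathsf{l}_\mathsf{T},\mathsf{t},\mathsf{r}_\mathsf{T}}^{\mathsf{r}_\mathsf{B},\mathsf{b},\mathsf{l}_\mathsf{B}}(q)=\sum_{g\in H_{\mathsf{l}_\mathsf{T},\mathsf{t},\mathsf{r}_\mathsf{T}}^{\mathsf{r}_\mathsf{B},\mathsf{b},\mathsf{l}_\mathsf{B}}}\bigl(q+1/q\bigr)^{c(g)},
\notag
\end{equation}
where $c(g)$ is the number of closed paths in $g$. Evaluating at $q=\rho$, a primitive sixth root of unity, and using the defining relation $\rho+1/\rho=1$, every factor $(q+1/q)^{c(g)}$ becomes $1^{c(g)}=1$, so the sum counts the HFPLs in $H_{\mathsf{l}_\mathsf{T},\mathsf{t},\mathsf{r}_\mathsf{T}}^{\mathsf{r}_\mathsf{B},\mathsf{b},\mathsf{l}_\mathsf{B}}$ with multiplicity one each, giving exactly $h_{\mathsf{l}_\mathsf{T},\mathsf{t},\mathsf{r}_\mathsf{T}}^{\mathsf{r}_\mathsf{B},\mathsf{b},\mathsf{l}_\mathsf{B}}$ as desired.

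The main obstacle I anticipate is the fiber-counting claim, specifically verifying rigorously that the orientations of all \emph{non-closed} paths are uniquely determined by the boundary and the vanishing of $RL_b$ and $RL_t$. One must check every type of non-closed path enumerated in the proof of Proposition~\ref{Prop:InterpretationTurnsHFPLs} -- paths joining two $\mathcal{T}$-vertices, a $\mathcal{B}$- and a $\mathcal{T}$-vertex, a $\mathcal{T}$-vertex to $\mathcal{R}_T\cup\mathcal{R}_B$, and so on -- and confirm that in each case the source/sink roles of the endpoints (fixed by the boundary words and by left-hand-incoming/right-hand-outgoing) leave no orientation ambiguity, while $RL=0$ rules out the remaining reorientable configurations. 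Once this rigidity is established, the doubling over closed paths and the $\rho+1/\rho=1$ cancellation are routine; the genuine content is entirely in the analogue of the TFPL rigidity argument from \cite{TFPL}, adapted to the six boundary segments of the hexagon.
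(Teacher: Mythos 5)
Your argument is correct and is exactly the one the paper intends: the paper omits the proof of Lemma~\ref{Lemma:HFPL_Weighted_Enumeration_Without_RL}, citing the analogous TFPL identity, and that cited argument is precisely your computation -- on $\overline{H}_{\mathsf{l}_\mathsf{T},\mathsf{t},\mathsf{r}_\mathsf{T}}^{\mathsf{r}_\mathsf{B},\mathsf{b},\mathsf{l}_\mathsf{B}}$ the weight reduces to $q^{N^{\circlearrowleft}-N^{\circlearrowright}}$, the fiber over an ordinary HFPL $g$ consists of the $2^{c(g)}$ loop orientations (all non-closed paths being rigid once the boundary and $RL_b=RL_t=0$ are imposed), and $(q+1/q)^{c(g)}$ evaluates to $1$ at $q=\rho$. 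Your identification of the rigidity of the non-closed paths as the only point needing case-by-case verification is also the right assessment.
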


The arguments in the proof of Lemma~\ref{Lemma:HFPL_Weighted_Enumeration_Without_RL} are the same as in the proof of an analogous identity for TFPLs in \cite[Proposition 2.5]{TFPL}. For that reason, the proof is omitted. 
Given an oriented HFPL $f$ in $\overrightarrow{H}_{\mathsf{l}_\mathsf{T},\mathsf{t},\mathsf{r}_\mathsf{T}}^{\mathsf{r}_\mathsf{B},\mathsf{b},\mathsf{l}_\mathsf{B}}$, consider the oriented HFPL, 
that is obtained from $f$ by orienting all paths in $f$ that connect two vertices in $\mathcal{B}$ or two vertices in $\mathcal{T}$ from left to right. Its boundary has to be
$(\mathsf{l}_\mathsf{T},\mathsf{t}',\mathsf{r}_\mathsf{T};\mathsf{r}_\mathsf{B},\mathsf{b}',\mathsf{l}_\mathsf{B})$ for a word $\mathsf{b}'$ that is left-points-fixing feasible for $\mathsf{b}$ and a word
$\mathsf{t}'$ that is right-points-fixing feasible for $\mathsf{t}$. Furthermore, its weight is decreased by $q^{g(\mathsf{b},\mathsf{b}')-g(\mathsf{t},\mathsf{t}')}$.
For those reasons, the following holds: 

\begin{equation} 
\overrightarrow{h}_{\mathsf{l}_\mathsf{T},\mathsf{t},\mathsf{r}_\mathsf{T}}^{\mathsf{r}_\mathsf{B},\mathsf{b},\mathsf{l}_\mathsf{B}}(q)=
\sum\limits_{\substack{\mathsf{t}':\,\mathsf{t}' \textnormal{ right-points-fixing feasible for } \mathsf{t}\\
\mathsf{b}': \,\mathsf{b}' \textnormal{ left-points-fixing feasible for } \mathsf{b}}}q^{-g(\mathsf{t},\mathsf{t}')}q^{g(\mathsf{b},\mathsf{b}')}
\overline{h}_{\mathsf{l}_\mathsf{T},\mathsf{t}^{\prime},\mathsf{r}_\mathsf{T}}^{\mathsf{r}_\mathsf{B},\mathsf{b}^{\prime},\mathsf{l}_\mathsf{B}}(q)
\label{Eq:Weighted_Relations_HFPL}\end{equation}

The goal is to invert the relation in (\ref{Eq:Weighted_Relations_HFPL}) so that with the help of Lemma~\ref{Lemma:HFPL_Weighted_Enumeration_Without_RL} an expression of the number of HFPLs in terms of the weighted
enumeration of oriented HFPLs in (\ref{Eq:WeightedEnumerationHFPLs}) is gained.

\begin{Def}
\begin{enumerate}
\item The square matrix $M=M_b(n)$ of size $n$ has rows and columns indexed by words of length $n$ and entry $M_{w,w'}=q^{g(w,w')}$, if $w'$ is left-points-fixing feasible for $w$, and entry $M_{w,w'}=0$, otherwise.
\item The square matrix $M=M_t(n)$ of size $n$ has rows and columns indexed by words of length $n$ and entry $M_{w,w'}=q^{-g(w,w')}$, if $w'$ is right-points-fixing feasible for $w$, and entry $M_{w,w'}=0$ otherwise.
\end{enumerate} 
\end{Def}
These are square matrices of size $2^n$.
\begin{figure}[tbh]
\centering
\includegraphics[width=1\textwidth]{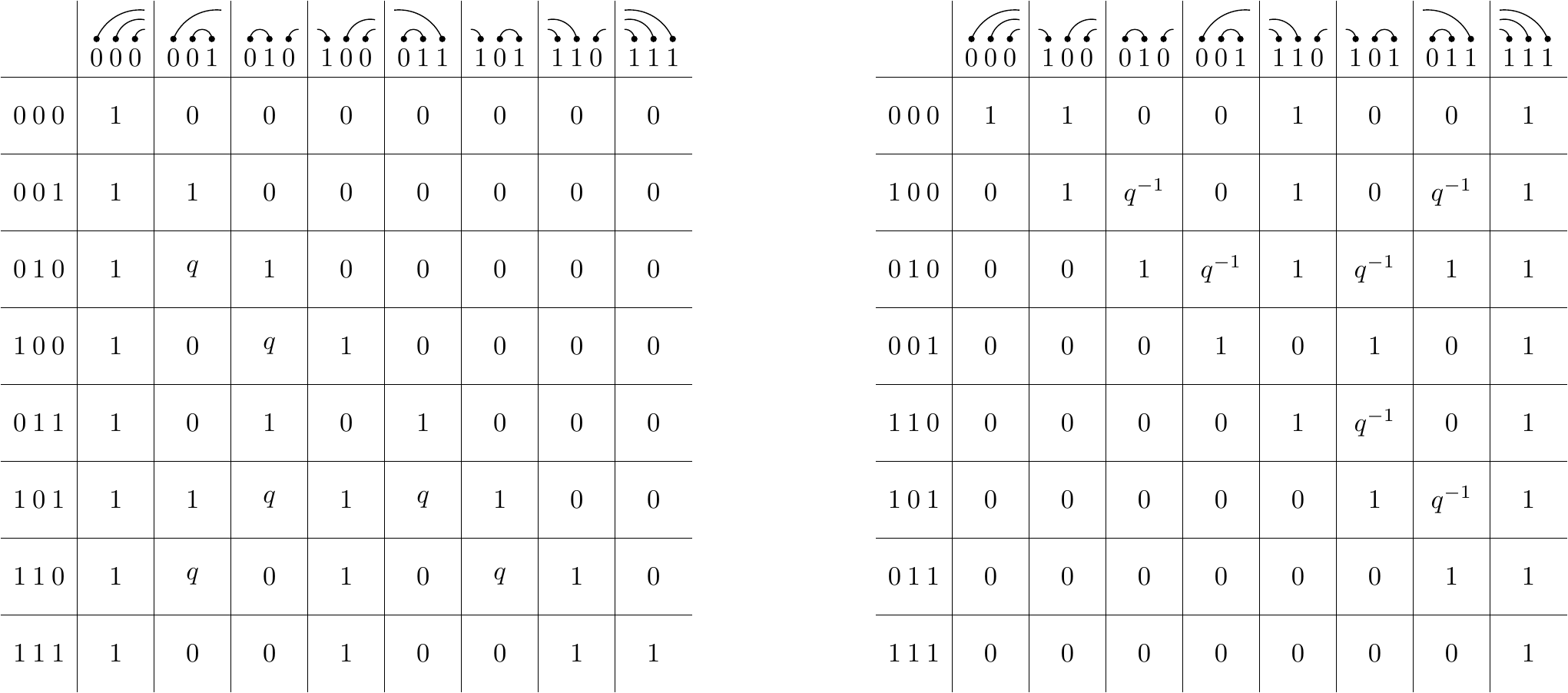}
\caption{The matrices $M_b(3)$ (left) and $M_t(3)$ (right).}
\end{figure}

\begin{Prop}\label{Prop:HFPL_Matrices_invertible}
For any positive integer $n$ the matrices $M_b(n)$ and $M_t(n)$ are invertible.  
\end{Prop}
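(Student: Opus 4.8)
The plan is to prove that each of $M_b(n)$ and $M_t(n)$ is triangular with all diagonal entries equal to $1$, with respect to a suitable total order on the $2^n$ words of length $n$; invertibility (in fact $\det=1$) is then immediate. Fix a pair $(w,w')$ with $w'$ left-points-fixing feasible for $w$, and let $\overrightarrow{\pi}$ be the unique directed extended link pattern with underlying extended link pattern $\mathbf{w}^{-1}(w')$ and source-sink-word $w$. I would decompose both $w$ and $w'$ according to the matched pairs (the \emph{blocks}), the left points and the right points of $\overrightarrow{\pi}$. In $w'=\mathbf{w}(\overrightarrow{\pi})$ each block carries a $0$ at its smaller and a $1$ at its larger index, each left point carries $1$ and each right point carries $0$. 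In the source-sink-word $w$ each block carries one $0$ (its source) and one $1$ (its sink), and a left (resp.\ right) point carries $1$ exactly when it is a sink. Since $\overrightarrow{\pi}$ is left-hand-incoming every left point is a sink, whence counting ones yields $|w|_1=|w'|_1+(\text{number of right points that are sinks})\ge|w'|_1$, with equality precisely when every right point is a source.

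Next I would record the two structural facts that drive the ordering. Call a block $\{i,j\}$ with $i<j$ \emph{flipped} if in $w$ its source lies at the larger index $j$; these are exactly the pairs counted by $RL(\overrightarrow{\pi})=g(w,w')$, and a flipped block contributes $w_i=1,\,w_j=0$ against $w'_i=0,\,w'_j=1$. On the diagonal $w'=w$ the constraint that the source-sink-word equals $w$ already forces every left point ($w=1$ there) to be a sink, every right point ($w=0$ there) to be a source, and within each block the source to sit at its $0$-position, i.e.\ at its smaller index; hence no block is flipped, $g(w,w)=0$, and $M_{w,w}=q^{0}=1$. When instead $|w|_1=|w'|_1$ and $w\ne w'$, all right points are sources, so $w$ and $w'$ agree outside the blocks while at least one block is flipped. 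Computing the prefix difference $\sum_{p\le m}(w_p-w'_p)$, each flipped block $\{i,j\}$ contributes $+1$ for $i\le m<j$ and $0$ otherwise, and every other position contributes $0$; thus $|w_1\cdots w_m|_1\ge|w'_1\cdots w'_m|_1$ for all $m$, strictly for some $m$, so $w>w'$ in dominance.

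With these facts I would order the words of length $n$ by any linear extension of the partial order whose primary key is the number of ones taken in \emph{decreasing} order and whose secondary key is dominance taken in \emph{decreasing} order. Every nonzero off-diagonal entry $M_{w,w'}$ of $M_b(n)$ then satisfies either $|w|_1>|w'|_1$, or $|w|_1=|w'|_1$ together with $w>w'$ in dominance; in both cases $w$ strictly precedes $w'$, so $M_b(n)$ is triangular with unit diagonal, and therefore invertible with $\det M_b(n)=1$. The case of $M_t(n)$ is the mirror image: $w'$ right-points-fixing feasible for $w$ makes $\overrightarrow{\pi}$ right-hand-outgoing, so every right point is forced to be a source and the one-count becomes $|w|_1=|w'|_1-(\text{number of left points that are sources})\le|w'|_1$; the block analysis (hence the flipped-block dominance argument and the value $g(w,w)=0$, now with weight $q^{-g}$) is verbatim the same. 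Ordering words by number of ones in \emph{increasing} order and then by dominance in \emph{decreasing} order makes $M_t(n)$ triangular with unit diagonal, and thus invertible as well.

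The only real work, and the step I expect to be the main obstacle, is the combinatorial bookkeeping of the middle paragraph: identifying the flipped blocks with the pairs of $RL(\overrightarrow{\pi})$ and tracking how they simultaneously control the number of ones and the dominance order via the prefix-sum computation. Once that identification and the diagonal evaluation $g(w,w)=0$ are in place, the triangularity and hence invertibility of both matrices follow formally, in complete analogy with the corresponding statement for TFPLs in \cite{TFPL}.
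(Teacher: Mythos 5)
Your proposal is correct and takes essentially the same approach as the paper's own proof: order the words of length $n$ primarily by the number of ones and secondarily by the dominance order, observe that left-points-fixing (resp.\ right-points-fixing) feasibility forces the corresponding inequality of one-counts with dominance as tiebreaker, and conclude that $M_b(n)$ and $M_t(n)$ are triangular with unit diagonal. Your write-up is in fact a little more explicit than the paper's (the prefix-sum verification of the dominance comparison via the flipped blocks of $RL(\overrightarrow{\pi})$, and the justification that $g(w,w)=0$), but the underlying argument is the same.
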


\begin{proof}
Throughout this proof, if $w'$ is feasible for $w$ let $\overrightarrow{\pi}$ be the unique directed extended link pattern with underlying extended link pattern $\textbf{w}^{-1}(w')$ and source-sink word $w$.

It will first be proven that $M_b(n)$ is a lower triangular matrix with ones on the diagonal and therefore invertible. There are only ones on the diagonal of $M_b(n)$ because $q^{g(w,w)}=q^0=1$ for all words $w$ of 
length $n$.
To show that $M_b(n)$ is lower triangular it is sufficient to find a linear order $\prec_b$ on the set of words of length $n$ that satisfies $w'\prec_b w$ whenever $w'\neq w$ and $w'$ is left-points-fixing feasible for $w$ and use it
for the rows and columns of $M_b(n)$.
First, note that if $w'$ is left-points-fixing feasible for $w$ and in $\overrightarrow{\pi}$ all right points are sinks then 
there exist ordered pairs $(i_1,j_1), (i_2,j_2), \dots (i_k,j_k)$ such that $w'_{i_s}=0$, $w'_{j_s}=1$, $w_{i_s}=1$ and $w_{j_s}=0$ for all $1\leq s\leq k$ and $w_i=w'_i$ for all other indices. 
Thus, $\vert w'\vert_1=\vert w\vert_1$ and $w'\leq w$ in that particular case. Now, given any two words $w$ and $w'$ of length $n$ such that
$w'$ is left-points-fixing feasible for $w$ then $\vert w\vert_1-\vert w'\vert_1$ is the number of right points in $\overrightarrow{\pi}$
which are sinks. In particular, $\vert w\vert_1\geq\vert w'\vert_1$ in that case. 
Hence, for to given words $w$ and $w'$ set $w'\preceq_b w$ if $\vert w'\vert_1\leq \vert w\vert_1$ and in the case when $\vert w'\vert_1=\vert w\vert_1$ if additionally
$w'\leq w$. Then, by $\preceq_b$ a partial order on the set of words of length $n$ is defined. Furthermore, for any two words $w$ and $w'$ of length $n$ such that $w'$ is left-points-fixing feasibility for $w$ it follows 
$w'\preceq_b w$.
Thus, for any linear order $\prec_b$ on the set of words of length $n$ that extends $\preceq_b$ it holds that $w'\prec_b w$ whenever $w'\neq w$ and $w'$ is left-points-fixing feasible for $w$. 

Finally, it will be shown that $M_t(n)$ is an upper triangular matrix with ones on the diagonal and therefore invertible. Different to above, a linear order $\prec_t$ on the set of words of length $n$ that satisfies
$w' \succ_t w$ whenever $w'\neq w$ and $w'$ is right-points-fixing feasible for $w$ is needed to be found. In the case when
$w'$ is right-points-fixing feasible for $w$, the number of left points that are sources is given by $\vert w'\vert_1-\vert w\vert_1$. Thus, $\vert w'\vert_1\geq \vert w\vert_1$ in that case. 
Now, a partial order $\preceq_t$ on the set of words of length $n$ is defined as follows: given two words $w$ and $w'$ of length $n$ it is set $w'\preceq_t w$ if $\vert w\vert_1\leq 
\vert w'\vert_1$ and in the case when $\vert w\vert_1=\vert w'\vert_1$ if additionally $w\leq w'$. 
Then right-points-fixing
feasibility of $w'$ for $w$ implies that $w'\succeq_t w$
Summing up, given any linear order on the set of words of length $n$ that extends $\preceq_t$ and using that order for the 
rows and columns of $M_t(n)$, the matrix $M_t(n)$ becomes an upper triangular matrix with ones on the diagonal. 
\end{proof}

\begin{Cor}\label{Cor:WeightedEnumeration_HFPL}
Let $(\mathsf{l}_\mathsf{T},\mathsf{t},\mathsf{r}_\mathsf{T};\mathsf{r}_\mathsf{B},\mathsf{b},\mathsf{l}_\mathsf{B})$ be a sextuple of words of length $(K,L,M;N,K+L-N,M+N-K)$ respectively. Then

\begin{equation}
\overline{h}_{\mathsf{l}_\mathsf{T},\mathsf{t},\mathsf{r}_\mathsf{T}}^{\mathsf{r}_\mathsf{B},\mathsf{b},\mathsf{l}_\mathsf{B}}(q)=
\sum\limits_{\mathsf{t}', \mathsf{b}'}(M_b(K+L-N)^{-1})_{\mathsf{b},\mathsf{b}'}(M_t(L)^{-1})_{\mathsf{t},\mathsf{t}'}
\overrightarrow{h}_{\mathsf{l}_\mathsf{T},\mathsf{t}',\mathsf{r}_\mathsf{T}}^{\mathsf{r}_\mathsf{B},\mathsf{b}',\mathsf{l}_\mathsf{B}}(q) 
\notag\end{equation}
and in particular

\begin{equation}
h_{\mathsf{l}_\mathsf{T},\mathsf{t},\mathsf{r}_\mathsf{T}}^{\mathsf{r}_\mathsf{B},\mathsf{b},\mathsf{l}_\mathsf{B}}=\sum\limits_{\mathsf{t}', \mathsf{b}'}
(M_b(K+L-N)^{-1})_{\mathsf{b},\mathsf{b}'}(M_t(L)^{-1})_{\mathsf{t},\mathsf{t}'}\overrightarrow{h}_{\mathsf{l}_\mathsf{T},\mathsf{t}',\mathsf{r}_\mathsf{T}}^{\mathsf{r}_\mathsf{B},\mathsf{b}',\mathsf{l}_\mathsf{B}}(\rho)
\notag\end{equation}
where $\rho$ is a primitive sixth root of unity.
\end{Cor}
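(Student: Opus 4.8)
The plan is to recognize that the summation relation~(\ref{Eq:Weighted_Relations_HFPL}) is nothing but a linear transformation sending the family $\bigl\{\overline{h}_{\cdot}^{\cdot}(q)\bigr\}$ to the family $\bigl\{\overrightarrow{h}_{\cdot}^{\cdot}(q)\bigr\}$, a transformation that factors as a Kronecker product of the two matrices $M_t(L)$ and $M_b(K+L-N)$, and then to invert it. First I would fix the four \emph{side} boundary words $\mathsf{l}_\mathsf{T},\mathsf{r}_\mathsf{T},\mathsf{r}_\mathsf{B},\mathsf{l}_\mathsf{B}$ and regard both $\overrightarrow{h}_{\mathsf{l}_\mathsf{T},\mathsf{t},\mathsf{r}_\mathsf{T}}^{\mathsf{r}_\mathsf{B},\mathsf{b},\mathsf{l}_\mathsf{B}}(q)$ and $\overline{h}_{\mathsf{l}_\mathsf{T},\mathsf{t},\mathsf{r}_\mathsf{T}}^{\mathsf{r}_\mathsf{B},\mathsf{b},\mathsf{l}_\mathsf{B}}(q)$ as functions of the pair $(\mathsf{t},\mathsf{b})$, where $\mathsf{t}$ ranges over words of length $L$ and $\mathsf{b}$ over words of length $K+L-N$. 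The crucial point licensing this reduction is that $M_t(L)$ and $M_b(K+L-N)$ depend only on the lengths $L$ and $K+L-N$, and not on the four fixed side words.

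With this viewpoint, relation~(\ref{Eq:Weighted_Relations_HFPL}) reads exactly
\[
\overrightarrow{h}_{\mathsf{l}_\mathsf{T},\mathsf{t},\mathsf{r}_\mathsf{T}}^{\mathsf{r}_\mathsf{B},\mathsf{b},\mathsf{l}_\mathsf{B}}(q)
=\sum_{\mathsf{t}',\mathsf{b}'}\bigl(M_t(L)\bigr)_{\mathsf{t},\mathsf{t}'}\bigl(M_b(K+L-N)\bigr)_{\mathsf{b},\mathsf{b}'}\,
\overline{h}_{\mathsf{l}_\mathsf{T},\mathsf{t}',\mathsf{r}_\mathsf{T}}^{\mathsf{r}_\mathsf{B},\mathsf{b}',\mathsf{l}_\mathsf{B}}(q),
\]
because the factor $q^{-g(\mathsf{t},\mathsf{t}')}$ is precisely the $(\mathsf{t},\mathsf{t}')$-entry of $M_t(L)$ (nonzero exactly when $\mathsf{t}'$ is right-points-fixing feasible for $\mathsf{t}$) and $q^{g(\mathsf{b},\mathsf{b}')}$ is the $(\mathsf{b},\mathsf{b}')$-entry of $M_b(K+L-N)$ (nonzero exactly when $\mathsf{b}'$ is left-points-fixing feasible for $\mathsf{b}$). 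In operator form this is $\overrightarrow{h}=\bigl(M_t(L)\otimes M_b(K+L-N)\bigr)\,\overline{h}$, the two matrices acting on the two independent indices $\mathsf{t}$ and $\mathsf{b}$.

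Next I would invoke Proposition~\ref{Prop:HFPL_Matrices_invertible}, which guarantees that $M_t(L)$ and $M_b(K+L-N)$ are each invertible; consequently their Kronecker product is invertible, with inverse $M_t(L)^{-1}\otimes M_b(K+L-N)^{-1}$. Applying this inverse to both sides yields
\[
\overline{h}_{\mathsf{l}_\mathsf{T},\mathsf{t},\mathsf{r}_\mathsf{T}}^{\mathsf{r}_\mathsf{B},\mathsf{b},\mathsf{l}_\mathsf{B}}(q)
=\sum_{\mathsf{t}',\mathsf{b}'}\bigl(M_b(K+L-N)^{-1}\bigr)_{\mathsf{b},\mathsf{b}'}\bigl(M_t(L)^{-1}\bigr)_{\mathsf{t},\mathsf{t}'}\,
\overrightarrow{h}_{\mathsf{l}_\mathsf{T},\mathsf{t}',\mathsf{r}_\mathsf{T}}^{\mathsf{r}_\mathsf{B},\mathsf{b}',\mathsf{l}_\mathsf{B}}(q),
\]
which is the first displayed identity of the corollary. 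Finally I would specialize $q=\rho$ for a primitive sixth root of unity and invoke Lemma~\ref{Lemma:HFPL_Weighted_Enumeration_Without_RL}, which identifies $\overline{h}_{\mathsf{l}_\mathsf{T},\mathsf{t},\mathsf{r}_\mathsf{T}}^{\mathsf{r}_\mathsf{B},\mathsf{b},\mathsf{l}_\mathsf{B}}(\rho)$ with $h_{\mathsf{l}_\mathsf{T},\mathsf{t},\mathsf{r}_\mathsf{T}}^{\mathsf{r}_\mathsf{B},\mathsf{b},\mathsf{l}_\mathsf{B}}$; substituting this on the left-hand side gives the second displayed formula. There is no genuine obstacle here, since the heavy lifting has already been done: the only points requiring care are the bookkeeping that the two matrices act on disjoint indices (so that inverting the combined map reduces to inverting each factor) and the observation that the side words enter neither matrix, so that the reduction to a clean matrix identity for each fixed choice of side words is valid.
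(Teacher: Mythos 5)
Your proposal is correct and follows exactly the route the paper intends: it rewrites relation~(\ref{Eq:Weighted_Relations_HFPL}) as the action of $M_t(L)\otimes M_b(K+L-N)$ on the two independent indices $\mathsf{t}$ and $\mathsf{b}$, inverts it via Proposition~\ref{Prop:HFPL_Matrices_invertible}, and specializes at $q=\rho$ using Lemma~\ref{Lemma:HFPL_Weighted_Enumeration_Without_RL}. The paper states the corollary without writing out a proof precisely because this is the immediate argument, so your write-up matches it in substance.
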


\section{Path-tangles}\label{Sec:PathTanglesHFPL}

In the first part of this section, new objects which will turn out to be in bijection with oriented HFPLs are etablished: hexagonal blue-red path-tangles. They are essential for the proof of Theorem~\ref{Thm:NecCondHFPL}.
In the second part, a purely combinatorial proof of Theorem~\ref{Thm:NecCondHFPL}(3) is given.
The idea of the proof is the same as in the proof of Theorem~4.3 in \cite{TFPL}. 

\subsection{Path-tangles}\label{Subsec:PathTanglesHFPL}
Throughout this subsection, when not mentioned otherwise, $\mathsf{l}_\mathsf{T}$, $\mathsf{t}$, $\mathsf{r}_\mathsf{T}$, $\mathsf{r}_\mathsf{B}$, $\mathsf{b}$ and $\mathsf{l}_\mathsf{B}$ are considered with the 
additional constraints
$\vert\mathsf{l}_\mathsf{T}\vert_0+\vert\mathsf{t}\vert_0=\vert\mathsf{b}\vert_0+\vert\mathsf{r}_\mathsf{B}\vert_0$ and $\vert\mathsf{t}\vert_1+\vert\mathsf{r}_\mathsf{T}\vert_1=\vert\mathsf{l}_\mathsf{B}\vert_1+\vert
\mathsf{b}\vert_1$.

\begin{figure}[tbh]
\includegraphics[width=.75\textwidth]{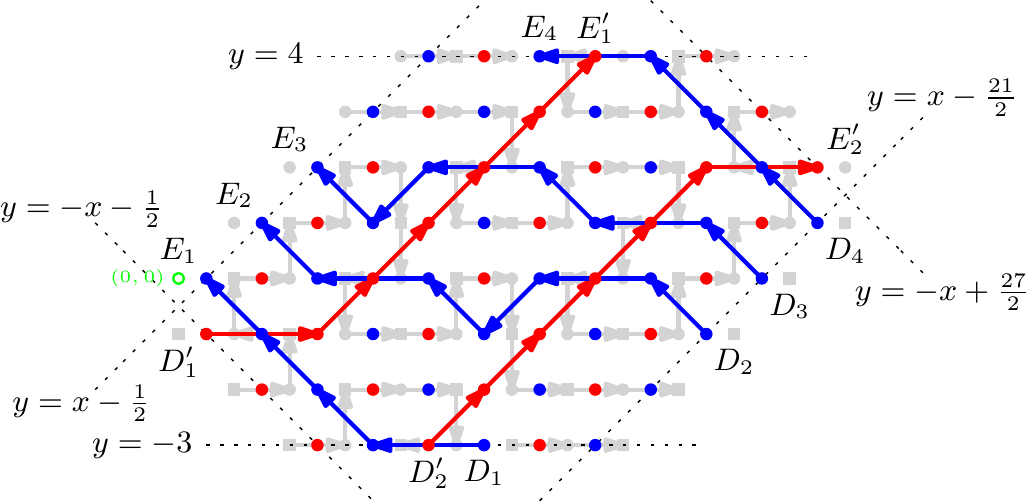} 
\caption{A hexagonal blue-red path-tangle with boundary $(00011,01,001;11000,10,100)$. The oriented HFPL it corresponds to is indicated in gray.}
\label{Fig:Hexagonal_path-tangle_5_2_3_5}
\end{figure}

In the following, let $I_{\mathsf{b}\,\mathsf{r}_\mathsf{B}}=\{1\leq i_1<\cdots < i_{\vert\mathsf{b}\vert_0+\vert\mathsf{r}_\mathsf{B}\vert_0}\leq K+L\}$ be the set of indices $i$ such that 
$(\mathsf{b}\,\mathsf{r}_\mathsf{B})_i=0$ and 
$I_{\mathsf{l}_\mathsf{T}\,\mathsf{t}}=\{1\leq j_1<\cdots< j_{\vert\mathsf{l}_\mathsf{T}\vert_0+\vert\mathsf{t}\vert_0}\leq K+L\}$ the set of indices $j$ such that $(\mathsf{l}_\mathsf{T}\,\mathsf{t})_j=0$. 
Futhermore, set
\begin{equation}
D_k=\begin{cases} (M+N-K-\frac{3}{2}+2i_k,-M-N+K) & \textnormal{ for } k=1,2,\dots,\vert\mathsf{b}\vert_0,\\
		  (M+L-\frac{1}{2}+i_k,-M-L-1+i_k) & \textnormal{ for } k=\vert\mathsf{b}\vert_0+1, \vert\mathsf{b}\vert_0+2,\dots,\vert\mathsf{b}\vert_0+\vert\mathsf{r}_\mathsf{B}\vert_0
    \end{cases} 
\notag\end{equation}
and
\begin{equation}
E_\ell=\begin{cases}  (j_\ell-\frac{1}{2},j_\ell-1) & \textnormal{ for } \ell=1,2,\dots,\vert\mathsf{l}_T\vert_0,\\
        (2j_\ell-K-\frac{1}{2},K-1) & \textnormal{ for } \ell=\vert\mathsf{l}_T\vert_0+1,\vert\mathsf{l}_T\vert_0+2,\dots,\vert\mathsf{l}_T\vert_0+\vert\mathsf{t}\vert_0.
       \end{cases} 
\notag\end{equation}
Let $\mathcal{P}(D_k,E_\ell)$ denote the set of paths from $D_k$ to $E_\ell$ using steps $(-1,1)$, $(-1,-1)$ and $(-2,0)$ which
never go below the line $y=-M-N+K$ and never above the line $y=K-1$.\\

On the other hand, let $I'_{\mathsf{l}_\mathsf{B}\,\mathsf{b}}=\{1\leq i'_1<\cdots < i'_{\vert\mathsf{l}_\mathsf{B}\vert_1+\vert\mathsf{b}\vert_1}\leq L+M\}$ be the set of indices $i'$ such that 
$(\mathsf{l}_\mathsf{B}\,\mathsf{b})_{i'}=1$ and 
$I'_{\mathsf{t}\,\mathsf{r}_\mathsf{T}}=\{1\leq j'_1<\cdots< j'_{\vert\mathsf{t}\vert_1+\vert\mathsf{r}_\mathsf{T}\vert_1}\leq L+M\}$ the set of indices $j'$ such that $(\mathsf{t}\,\mathsf{r}_\mathsf{T})_{j'}=1$. 
Furthermore, set
\begin{equation} 
D'_k=
 \begin{cases} (i'_k-\frac{1}{2},-i'_k) & \textnormal{ for } k=1,2,\dots,\vert\mathsf{l}_\mathsf{B}\vert_1\\
(2i'_k-M-N+K-\frac{1}{2},-M-N+K) & \textnormal{ for } k=\vert\mathsf{l}_\mathsf{B}\vert_1+1,\vert\mathsf{l}_\mathsf{B}\vert_1+2,\dots,\vert\mathsf{l}_\mathsf{B}\vert_1+\vert\mathsf{b}\vert_1
 \end{cases}
\notag\end{equation}
and
\begin{equation} 
E'_\ell=
 \begin{cases}
  (K+2j'_\ell-\frac{3}{2},K-1) & \textnormal{ for } \ell=1,2,\dots,\vert\mathsf{t}\vert_1,\\
  (K+L+j'_\ell-\frac{1}{2},K+L-j'_\ell) & \textnormal{ for } \ell=\vert\mathsf{t}\vert_1+1,\vert\mathsf{t}\vert_1+2,\dots,\vert\mathsf{t}\vert_1+\vert\mathsf{r}_\mathsf{T}\vert_1.
 \end{cases}
\notag\end{equation}
Let $\mathcal{P}'(D'_k,E'_\ell)$ denote the set of paths from $D'_k$ to $E'_\ell$ using steps $(1,1)$, $(1,-1)$ and $(2,0)$ which
never go below the line $y=-M-N+K$ and above the line $y=K-1$.

\begin{Def}
Let $\mathcal{P}(\mathsf{b}\,\mathsf{r}_\mathsf{B},\mathsf{l}_\mathsf{T}\,\mathsf{t})$ denote the set of $(\vert\mathsf{l}_\mathsf{T}\vert_0+\vert\mathsf{t}\vert_0)$-tuples $(P_1,\dots,P_{\vert\mathsf{l}_\mathsf{T}\vert_0+\vert\mathsf{t}\vert_0})$ of non-intersecting paths $P_k\in\mathcal{P}(D_k,E_k)$
and $\mathcal{P}'(\mathsf{l}_\mathsf{B}\,\mathsf{b},\mathsf{t}\,\mathsf{r}_\mathsf{T})$ denote the set of $(\vert\mathsf{t}\vert_1+\vert\mathsf{r}_\mathsf{T}\vert_1)$-tuples $(P'_1,\dots,P'_{\vert\mathsf{t}\vert_1+\vert\mathsf{r}_\mathsf{T}\vert_1})$ of non-intersecting paths 
$P'_k\in\mathcal{P}'(D'_k,E'_k)$. Furthermore, let \textnormal{BlueRed}$(\mathsf{l}_\mathsf{T},\mathsf{t},\mathsf{r}_\mathsf{T};\mathsf{r}_\mathsf{B},\mathsf{b},\mathsf{l}_\mathsf{B})$ denote the set of pairs $(B,R)\in\mathcal{P}(\mathsf{b}\mathsf{r}_\mathsf{B},\mathsf{l}_\mathsf{T}\mathsf{t})\times\mathcal{P}'(\mathsf{l}_\mathsf{B}\mathsf{b},\mathsf{t}\mathsf{r}_\mathsf{T})$
that satisfy the following:

\begin{enumerate}
 \item No diagonal step of $R$ crosses a diagonal step of $B$.
 \item Each middle point of a horizontal step of $B$ (resp. of $R$) is used by a step in $R$ (resp. $B$).  
\end{enumerate}
A configuration in \textnormal{BlueRed}$(\mathsf{l}_\mathsf{T},\mathsf{t},\mathsf{r}_\mathsf{T};\mathsf{r}_\mathsf{B},\mathsf{b},\mathsf{l}_\mathsf{B})$ is said to be a 
\textnormal{hexagonal blue-red path-tangle} with boundary $(\mathsf{l}_\mathsf{T},\mathsf{t},\mathsf{r}_\mathsf{T};\mathsf{r}_\mathsf{B},\mathsf{b},\mathsf{l}_\mathsf{B})$.
\end{Def}

In Figure~\ref{Fig:Hexagonal_path-tangle_5_2_3_5}, an example of a blue-red path-tangle with boundary $(00011,01,001;11000,10,100)$ is given.
Hexagonal blue-red path-tangles with boundary $(\mathsf{l}_\mathsf{T},\mathsf{t},\mathsf{r}_\mathsf{T};\mathsf{r}_\mathsf{B},\mathsf{b},\mathsf{l}_\mathsf{B})$ encode oriented HFPLs
with boundary $(\mathsf{l}_\mathsf{T},\mathsf{t},\mathsf{r}_\mathsf{T};\mathsf{r}_\mathsf{B},\mathsf{b},\mathsf{l}_\mathsf{B})$: given an oriented HFPL 
$f\in\overrightarrow{H}_{\mathsf{l}_\mathsf{T},\mathsf{t},\mathsf{r}_\mathsf{T}}^{\mathsf{r}_\mathsf{B},\mathsf{b},\mathsf{l}_\mathsf{B}}$, \textit{blue vertices} are added to $f$ in the middle of 
each horizontal line of $H^{K,L,M,N}$ having an odd left and an even right vertex and \textit{red vertices} are added in the middle of each horizontal line of $H^{K,L,M,N}$ having an even left and an odd right vertex.
Then blue and red arrows are added as indicated in Figure~\ref{Fig:Bijection:HFPL_Pathtangles}.
\begin{figure}[tbh]
\includegraphics[width=1\textwidth]{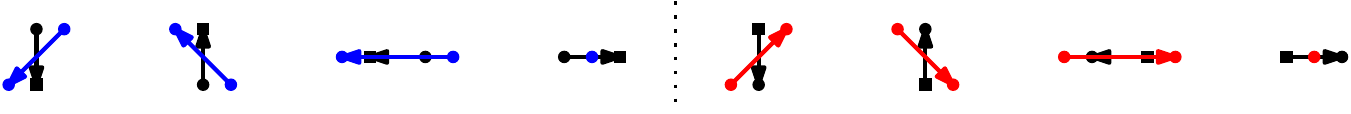}
\caption{From oriented HFPLs to blue-red path-tangles.}
\label{Fig:Bijection:HFPL_Pathtangles}
\end{figure}
After removing all vertices and edges of $f$ a blue-red path-tangle in BlueRed$(\mathsf{l}_\mathsf{T},\mathsf{t},\mathsf{r}_\mathsf{T};\mathsf{r}_\mathsf{B},\mathsf{b},\mathsf{l}_\mathsf{B})$ is obtained.

\begin{Thm}\label{Thm:Bijection_HFPL_path-tangle}
The map described above is a bijection between $\overrightarrow{H}_{\mathsf{l}_\mathsf{T},\mathsf{t},\mathsf{r}_\mathsf{T}}^{\mathsf{r}_\mathsf{B},\mathsf{b},\mathsf{l}_\mathsf{B}}$ and 
BlueRed$(\mathsf{l}_\mathsf{T},\mathsf{t},\mathsf{r}_\mathsf{T};\mathsf{r}_\mathsf{B},\mathsf{b},\mathsf{l}_\mathsf{B})$.
\end{Thm}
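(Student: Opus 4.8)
The plan is to reduce the global bijection to a purely local correspondence around each vertex of $H^{K,L,M,N}$, exactly as in the TFPL case. The essential observation is that the construction preceding the theorem is entirely local: after the blue and red vertices are placed at the midpoints of the horizontal edges, the rule in Figure~\ref{Fig:Bijection:HFPL_Pathtangles} reads off, from the oriented local configuration at a single vertex of $H^{K,L,M,N}$, a local configuration of blue and/or red arrow-pieces, and nothing else. So first I would verify that the table in Figure~\ref{Fig:Bijection:HFPL_Pathtangles} sets up a bijection between the admissible oriented local configurations at a vertex (one incoming and one outgoing edge at an interior degree-$2$ vertex, together with the prescribed orientation rules at the vertices of $\mathcal{L}_T\cup\mathcal{L}_B\cup\mathcal{R}_T\cup\mathcal{R}_B\cup\mathcal{T}\cup\mathcal{B}$) and the admissible local path-tangle pieces. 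Since the parity (odd left/even right versus even left/odd right) of the horizontal edge through a given vertex determines whether the contributed piece is blue or red, this local dictionary lets me glue the pieces into maximal blue paths and maximal red paths.

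Next I would check that the image is genuinely a configuration in $\mathrm{BlueRed}(\mathsf{l}_\mathsf{T},\mathsf{t},\mathsf{r}_\mathsf{T};\mathsf{r}_\mathsf{B},\mathsf{b},\mathsf{l}_\mathsf{B})$. This breaks into four points: that the glued blue pieces form paths using only the steps $(-1,1)$, $(-1,-1)$, $(-2,0)$ staying in the band $-M-N+K\le y\le K-1$, and the red pieces paths using $(1,1)$, $(1,-1)$, $(2,0)$ in the same band; that the endpoints are precisely the prescribed $D_k,E_\ell$ and $D'_k,E'_\ell$; that within each color the paths are non-intersecting; and that conditions (1) and (2) of the definition of a path-tangle hold. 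Non-intersection within a color follows because at each blue (resp. red) vertex exactly one blue (resp. red) path passes, and the two path-tangle conditions are local incidence facts read directly off the table: a horizontal step of one color occurs exactly when the opposite color occupies its midpoint, and a blue diagonal never crosses a red diagonal. Finally, planarity together with non-intersection forces the $k$-th blue path to run from $D_k$ to $E_k$, as required.

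For the inverse I would read the same table backwards: each admissible local path-tangle piece determines a unique oriented local edge configuration of an HFPL, and I assemble these into an oriented subgraph of $H^{K,L,M,N}$. The crucial simplification here is that, as noted before the definition of oriented HFPLs, the defining conditions of an oriented HFPL are local — the in/out condition at interior degree-$2$ vertices and the orientation rules at the boundary — so that the global non-connectivity condition \ref{Def:HFPL}(4) is automatically satisfied in the presence of an orientation. Hence no global verification beyond the local rule is needed, and because the two assembly procedures agree vertex by vertex with the local bijection, they compose to the identity in both directions.

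I expect the main obstacle to be the bookkeeping rather than anything conceptual: one must confirm that the assembled paths land exactly on the points $D_k,E_\ell,D'_k,E'_\ell$ with their stated affine coordinates, which amounts to matching the zeros of the concatenations $\mathsf{b}\,\mathsf{r}_\mathsf{B}$ and $\mathsf{l}_\mathsf{T}\,\mathsf{t}$ (respectively the ones of $\mathsf{l}_\mathsf{B}\,\mathsf{b}$ and $\mathsf{t}\,\mathsf{r}_\mathsf{T}$) with the degree/orientation conventions used to define the boundary of an oriented HFPL. The only genuinely subtle point is to check that the two path-tangle conditions together with non-intersection are equivalent to, and not merely implied by, the oriented-HFPL axioms, so that the inverse map indeed produces an element of $\overrightarrow{H}_{\mathsf{l}_\mathsf{T},\mathsf{t},\mathsf{r}_\mathsf{T}}^{\mathsf{r}_\mathsf{B},\mathsf{b},\mathsf{l}_\mathsf{B}}$.
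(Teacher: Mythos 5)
Your proposal follows essentially the same route as the paper, which omits the proof of Theorem~\ref{Thm:Bijection_HFPL_path-tangle} precisely because the local-dictionary argument of Theorem~4.1 in \cite{TFPL} carries over verbatim: one checks the vertex-by-vertex correspondence of Figure~\ref{Fig:Bijection:HFPL_Pathtangles}, glues the pieces into non-intersecting blue and red paths with the prescribed endpoints, and inverts locally, using that the orientation makes condition (4) of Definition~\ref{Def:HFPL} automatic. Your outline is consistent with that intended argument, including the observation that the global conditions become local in the oriented setting.
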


In Figure~\ref{Fig:Hexagonal_path-tangle_5_2_3_5}, the oriented HFPL corresponding to the depicted blue-red path-tangle is indicated in the same figure. The proof of Theorem~\ref{Thm:Bijection_HFPL_path-tangle}
is omitted because the arguments of the proof of Theorem~4.1 in \cite{TFPL} also apply for oriented HFPLs respectively hexagonal blue-red path-tangles. 
An immediate consequence of Theorem~\ref{Thm:Bijection_HFPL_path-tangle} is that the boundary $(\mathsf{l}_\mathsf{T},\mathsf{t},\mathsf{r}_\mathsf{T};\mathsf{r}_\mathsf{B},\mathsf{b},\mathsf{l}_\mathsf{B})$ of an 
oriented HFPL has to satisfy 
$\vert\mathsf{l}_\mathsf{T}\vert_0+\vert\mathsf{t}\vert_0=\vert\mathsf{b}\vert_0+\vert\mathsf{r}_\mathsf{B}\vert_0$ and 
$\vert\mathsf{t}\vert_1+\vert\mathsf{r}_\mathsf{T}\vert_1=\vert\mathsf{l}_\mathsf{B}\vert_1+\vert\mathsf{b}\vert_1$, what are the assertions of Theorem~\ref{Thm:NecCondHFPL}(1). 
Also the constraints on the boundary of an oriented HFPL stated in Theorem~\ref{Thm:NecCondHFPL}(2) can now be proven.

\begin{proof}[Proof of Theorem~\ref{Thm:NecCondHFPL}(2)]
It will only be shown that $\mathsf{l}_\mathsf{T}\,\mathsf{t}\leq\mathsf{b}\,\mathsf{r}_\mathsf{B}$.
Let $(B,R)\in\textnormal{BlueRed}(\mathsf{l}_\mathsf{T},\mathsf{t},\mathsf{r}_\mathsf{T};\mathsf{r}_\mathsf{B},\mathsf{r}_\mathsf{T},\mathsf{l}_\mathsf{B})$ where
$B=(P_1,\dots,P_{\vert\mathsf{l}_\mathsf{T}\vert_0+\vert\mathsf{t}\vert_0})$ for blue non-intersecting paths $P_k\in\mathcal{P}(D_k,E_k)$. 
Each path $P_k$ only uses steps $(-1,-1)$, $(-1,1)$ and $(-2,0)$. For that reason, the number of $(-1,-1)$- and of $(-2,0)$-steps of a path $P_k$ is given by $i_k-j_k$, where $i_k$ and $j_k$ are as defined above.
In particular, $j_k\leq i_k$ for each $1\leq k\leq \vert\mathsf{l}_{\mathsf{T}}\vert_0+\vert\mathsf{t}\vert_0$. Therefore, $\mathsf{l}_{\mathsf{T}}\,\mathsf{t}\leq \mathsf{b}\,\mathsf{r}_{\mathsf{B}}$.
\end{proof}

\begin{Prop}\label{Prop:HFPL_Enumeration_Edges} For any oriented HFPL in $\overrightarrow{H}_{\mathsf{l}_\mathsf{T},\mathsf{t},\mathsf{r}_\mathsf{T}}^{\mathsf{r}_\mathsf{B},\mathsf{b},\mathsf{l}_\mathsf{B}}$
and for any path-tangle in 
BlueRed$(\mathsf{l}_\mathsf{T},\mathsf{t},\mathsf{r}_\mathsf{T};\mathsf{r}_\mathsf{B},\mathsf{r}_\mathsf{T},\mathsf{l}_\mathsf{B})$ respectively, the following two formulas hold:

\begin{enumerate}
 \item $d(\mathsf{r}_\mathsf{B})+d(\mathsf{b})+\vert\mathsf{r}_\mathsf{B}\vert_0\vert\mathsf{b}\vert_1-d(\mathsf{l}_\mathsf{T})-d(\mathsf{t})-\vert\mathsf{l}_\mathsf{T}\vert_1\vert\mathsf{t}\vert_0
 =\vcenter{\hbox{\includegraphics[width=.05\textwidth]{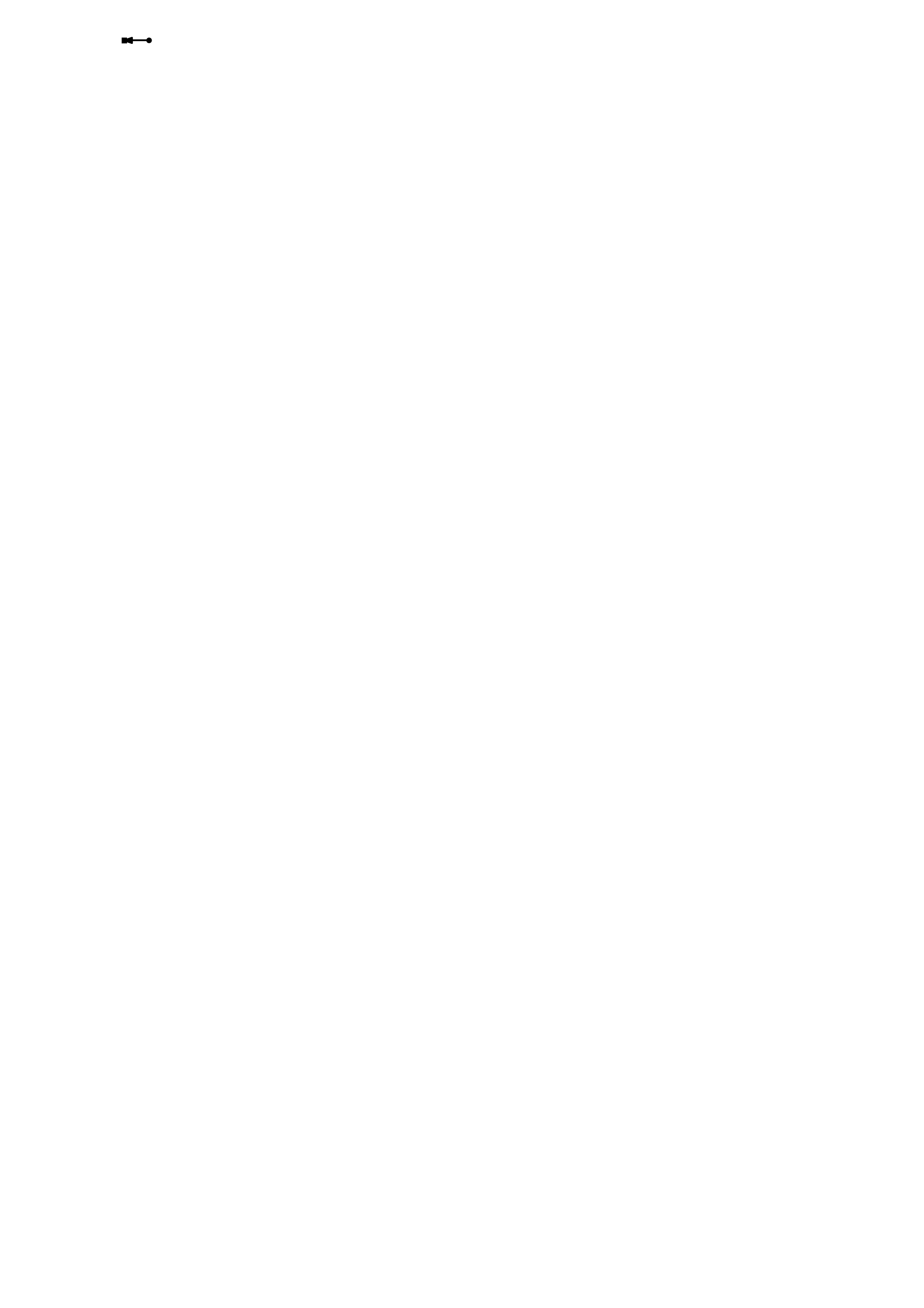}}}+\vcenter{\hbox{\includegraphics[width=.012\textwidth]{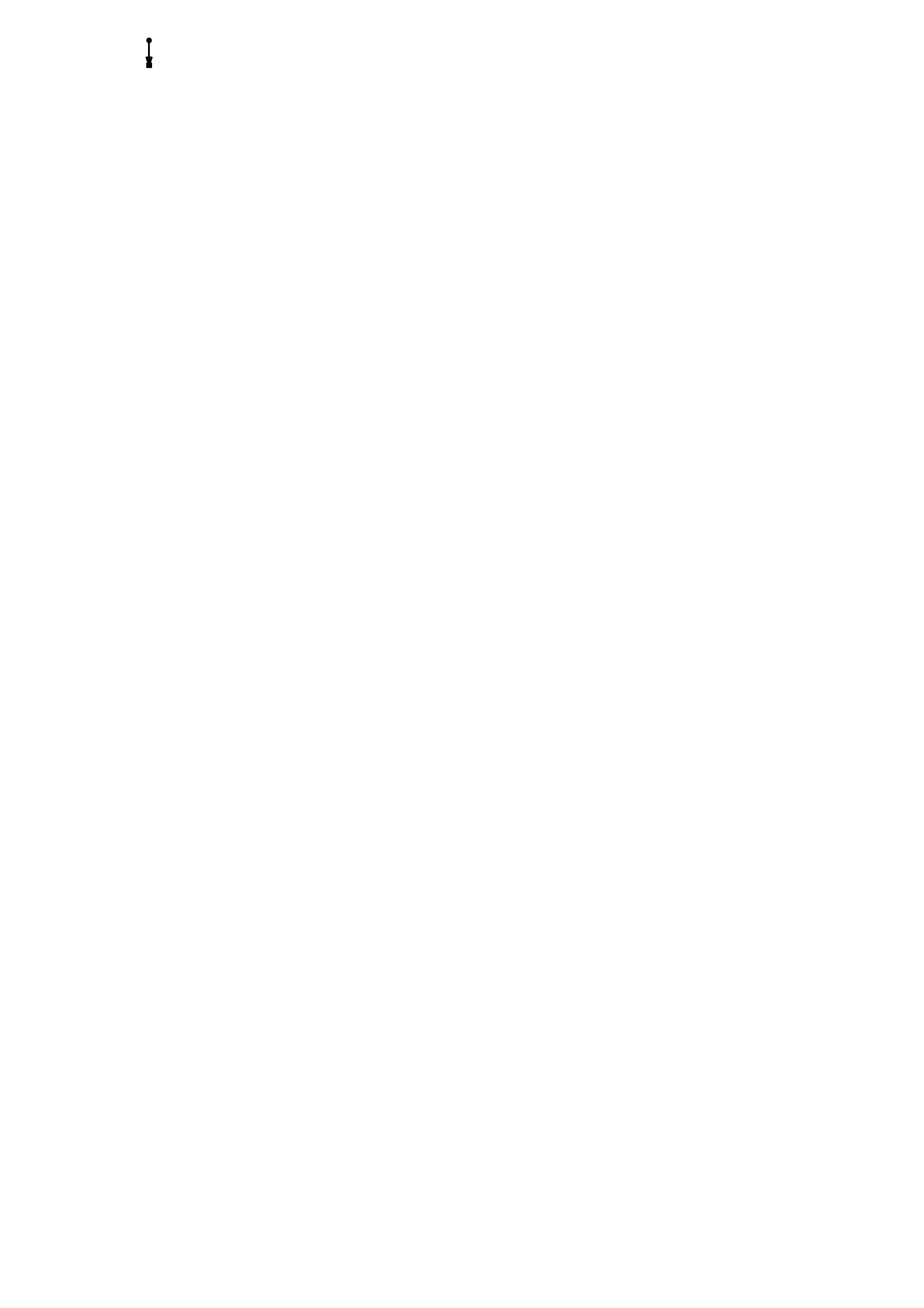}}}
 =\vcenter{\hbox{\includegraphics[width=.09\textwidth]{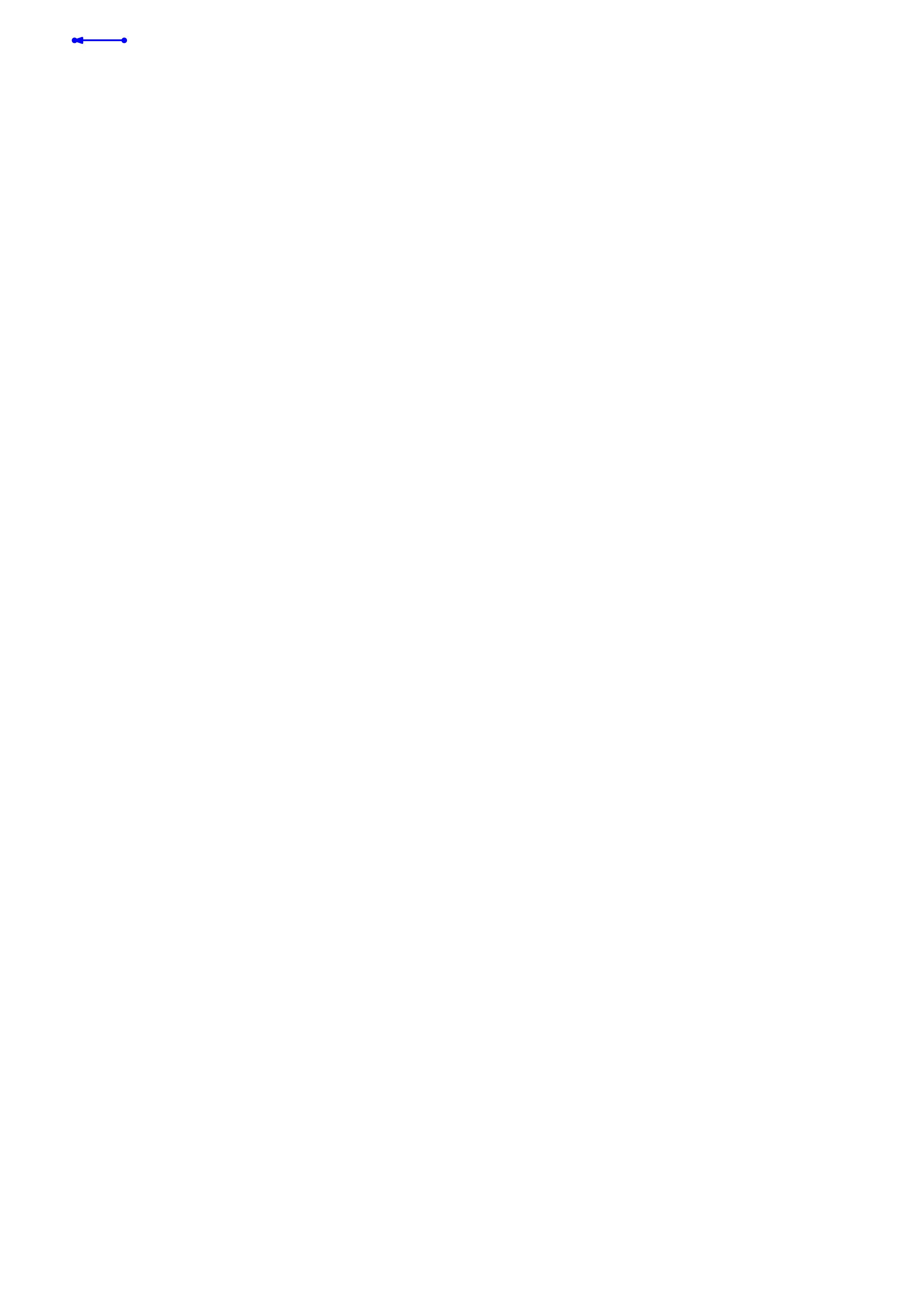}}}+\vcenter{\hbox{\includegraphics[width=.05\textwidth]{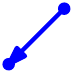}}}$;
 \item $d(\mathsf{b})+d(\mathsf{l}_\mathsf{B})+\vert\mathsf{b}\vert_0\vert\mathsf{l}_\mathsf{B}\vert_1-d(\mathsf{t})-d(\mathsf{r}_\mathsf{T})-\vert\mathsf{t}\vert_1\vert\mathsf{r}_\mathsf{T}\vert_0
 =\vcenter{\hbox{\includegraphics[width=.05\textwidth]{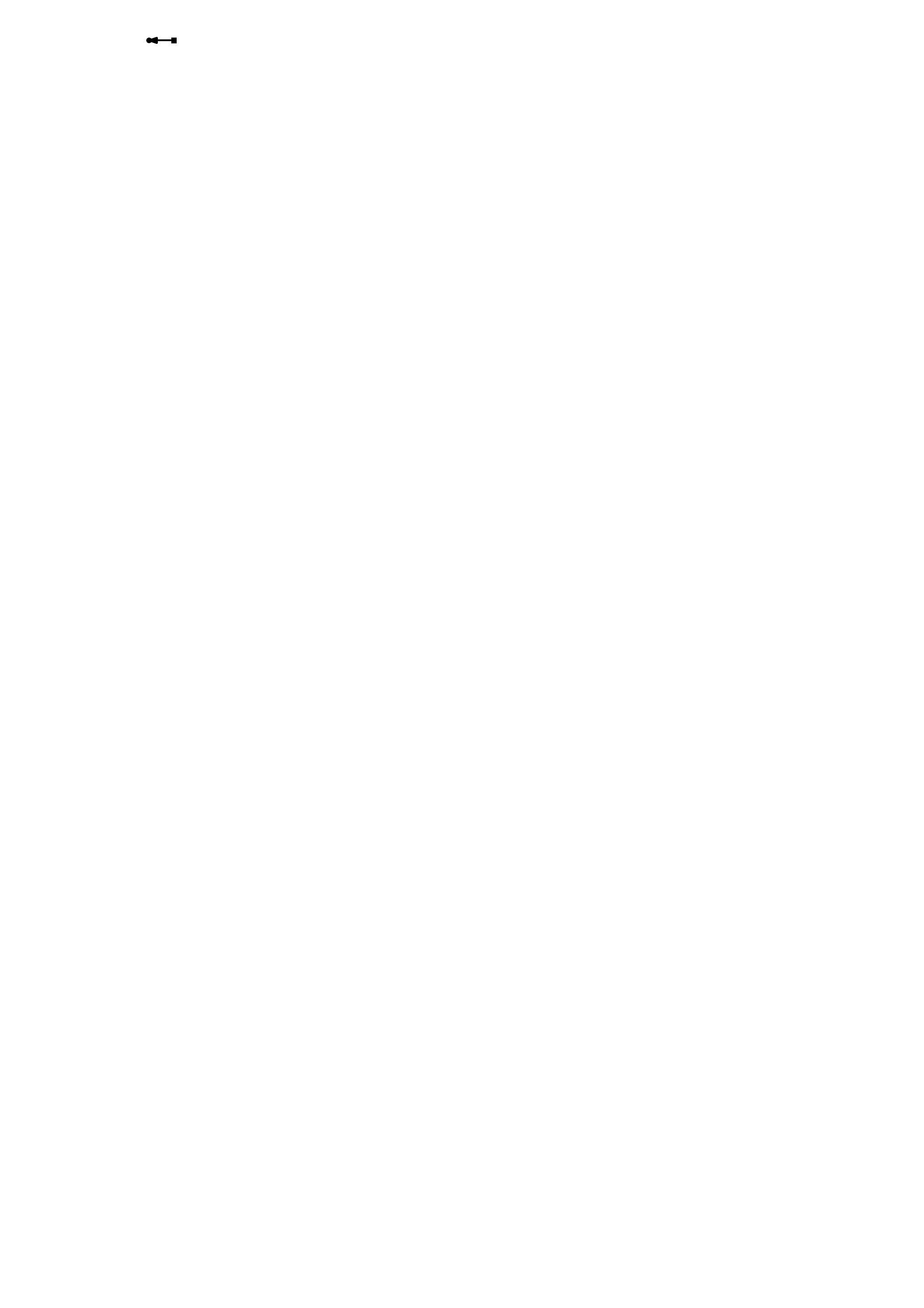}}}
 +\vcenter{\hbox{\includegraphics[width=.012\textwidth]{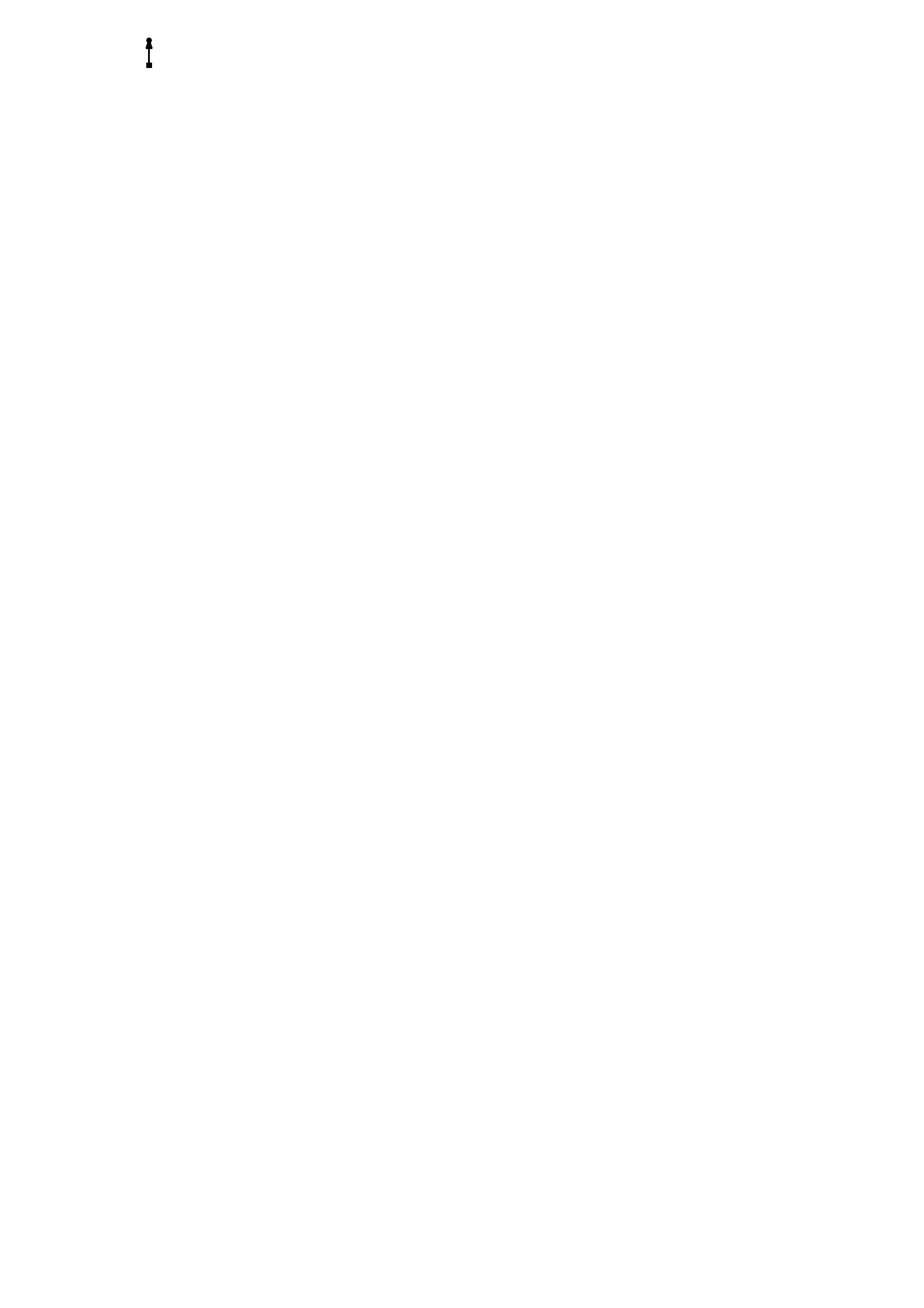}}}
 =\vcenter{\hbox{\includegraphics[width=.09\textwidth]{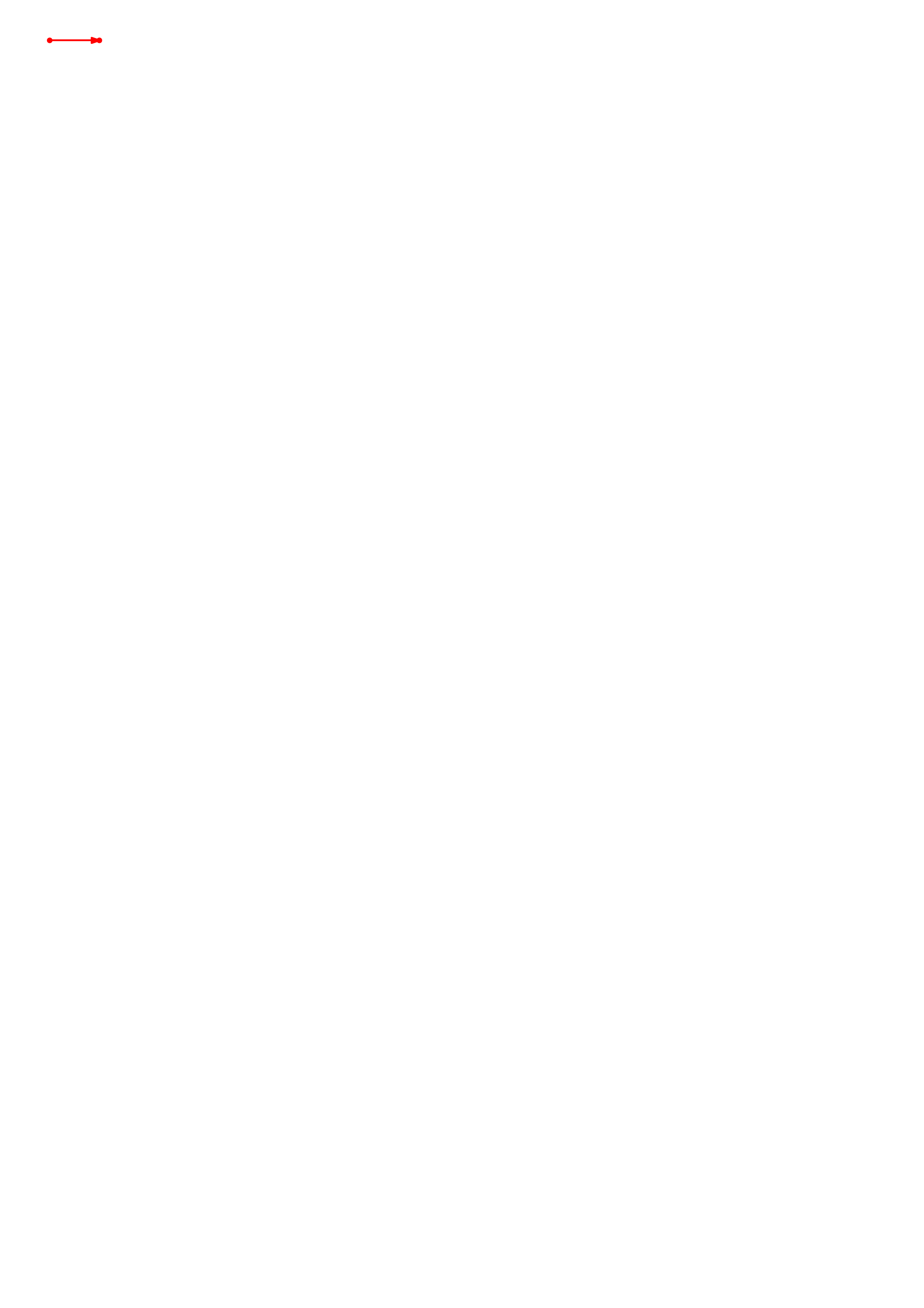}}}
 +\vcenter{\hbox{\includegraphics[width=.05\textwidth]{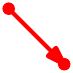}}}$.
 \end{enumerate}
Here, $\vcenter{\hbox{\includegraphics[width=.05\textwidth]{blue_left_HFPL}}}$, etc. denote the numbers of occurrences of the edge $\vcenter{\hbox{\includegraphics[width=.05\textwidth]{blue_left_HFPL}}}$, etc.
\end{Prop}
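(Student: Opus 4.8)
The plan is to reduce each of the two identities to a one-line count of path steps, exploiting that the left-hand sides depend only on the boundary words while both right-hand sides are read off a single configuration. The two cases are mirror images of each other (blue paths/zeroes for~(1), red paths/ones for~(2)), so I would set up~(1) in detail and then transcribe.

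First I would rewrite the left-hand sides as differences of inversion numbers of the relevant concatenations. Using the elementary identity $d(\omega\,\sigma)=d(\omega)+d(\sigma)+\vert\omega\vert_1\vert\sigma\vert_0$, the left-hand side of~(1) becomes $d(\mathsf{b}\,\mathsf{r}_\mathsf{B})-d(\mathsf{l}_\mathsf{T}\,\mathsf{t})$ and that of~(2) becomes $d(\mathsf{l}_\mathsf{B}\,\mathsf{b})-d(\mathsf{t}\,\mathsf{r}_\mathsf{T})$. By Theorem~\ref{Thm:NecCondHFPL}(1) the words $\mathsf{b}\,\mathsf{r}_\mathsf{B}$ and $\mathsf{l}_\mathsf{T}\,\mathsf{t}$ (both of length $K+L$) have equally many zeroes, and $\mathsf{l}_\mathsf{B}\,\mathsf{b}$ and $\mathsf{t}\,\mathsf{r}_\mathsf{T}$ (both of length $L+M$) have equally many ones. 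Next I would evaluate these differences through the index sets defined just before the path-tangles. Counting, for each zero of a word, the ones to its left shows that a word whose zeroes sit in positions $p_1<\cdots<p_r$ has $d(\omega)=\sum_k(p_k-k)$; applied to $I_{\mathsf{b}\,\mathsf{r}_\mathsf{B}}=\{i_k\}$ and $I_{\mathsf{l}_\mathsf{T}\,\mathsf{t}}=\{j_k\}$ this yields $d(\mathsf{b}\,\mathsf{r}_\mathsf{B})-d(\mathsf{l}_\mathsf{T}\,\mathsf{t})=\sum_k(i_k-j_k)$. The symmetric count on one-positions gives $d(\mathsf{l}_\mathsf{B}\,\mathsf{b})-d(\mathsf{t}\,\mathsf{r}_\mathsf{T})=\sum_k(j'_k-i'_k)$, with $i'_k$ and $j'_k$ the one-positions of $\mathsf{l}_\mathsf{B}\,\mathsf{b}$ and $\mathsf{t}\,\mathsf{r}_\mathsf{T}$.

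I would then match these sums with step counts of the paths. For a blue path $P_k$ from $D_k$ to $E_k$, which uses only the steps $(-1,1),(-1,-1),(-2,0)$, writing $b_k,c_k$ for the numbers of $(-1,-1)$- and $(-2,0)$-steps, the two displacement equations give $b_k+c_k=\tfrac12\big((x(D_k)+y(D_k))-(x(E_k)+y(E_k))\big)$; the whole point of the coordinates in the definitions of $D_k$ and $E_k$ is that $x+y$ equals $2i_k-\tfrac32$ at $D_k$ and $2j_k-\tfrac32$ at $E_k$ in \emph{both} coordinate regimes, so $b_k+c_k=i_k-j_k$ — this is the count already recorded in the proof of Theorem~\ref{Thm:NecCondHFPL}(2). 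Summing over $k$ identifies $\sum_k(i_k-j_k)$ with the total number of $(-2,0)$-steps (the ``blue left'' count) plus $(-1,-1)$-steps (the ``blue down'' count), which is the second equality in~(1). For the red paths the same argument with the diagonal coordinate $x-y$ (uniformly $2i'_k-\tfrac12$ at $D'_k$ and $2j'_k-\tfrac12$ at $E'_k$) produces $j'_k-i'_k$ as the number of $(1,-1)$- and $(2,0)$-steps, giving the second equality in~(2). Finally, the first equality in each line is obtained by transporting the step count through the bijection of Theorem~\ref{Thm:Bijection_HFPL_path-tangle}: the local rule of Figure~\ref{Fig:Bijection:HFPL_Pathtangles} turns each such blue (resp. red) step into exactly one occurrence of the correspondingly named oriented edge of the HFPL, so the edge counts coincide with the step counts.

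I expect the only real work to be bookkeeping on two fronts. The first is checking that the anti-diagonal value $x+y$ (for blue) and the diagonal value $x-y$ (for red) of the endpoints are the claimed affine functions of the indices in \emph{every} regime of the piecewise definitions of $D_k,E_k$ and $D'_k,E'_k$; this uniformity is exactly what makes $b_k+c_k$ (resp. $b'_k+c'_k$) depend only on $i_k-j_k$ (resp. $j'_k-i'_k$), independently of which boundary segment an index lands on, and it is where a careless case split could go wrong. The second is a careful reading of Figure~\ref{Fig:Bijection:HFPL_Pathtangles} to confirm that each of the two relevant step types is sent to its named HFPL edge precisely once, which is what upgrades the path-tangle step identity to the stated identity for oriented HFPLs.
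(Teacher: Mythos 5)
Your proposal is correct and follows essentially the same route as the paper: both reduce the left-hand side to $d(\mathsf{b}\,\mathsf{r}_\mathsf{B})-d(\mathsf{l}_\mathsf{T}\,\mathsf{t})=\sum_k(i_k-j_k)$ via the concatenation identity and the zero-position description of $d$, and both identify $i_k-j_k$ with the number of $(-1,-1)$- and $(-2,0)$-steps of the $k$-th blue path (the paper cites this from its proof of Theorem~\ref{Thm:NecCondHFPL}(2), while you rederive it from the invariance of $x+y$ along $(-1,1)$-steps, correctly checking all endpoint regimes). Your extra care in transporting the step counts through the bijection of Theorem~\ref{Thm:Bijection_HFPL_path-tangle} is implicit in the paper but not a different argument.
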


\begin{proof} As already noted in the proof of Thereom~\ref{Thm:NecCondHFPL}(2) above, it holds 

\begin{equation}
\sum\limits_{k=1}^{\vert\mathsf{l}_\mathsf{T}\vert_0+\vert\mathsf{t}\vert_0}(i_k-j_k)=
\vcenter{\hbox{\includegraphics[width=.09\textwidth]{blue_left}}}+\vcenter{\hbox{\includegraphics[width=.05\textwidth]{blue_down}}}.
\notag\end{equation}
On the other hand, 
\begin{align}
i_k-j_k & =\# \textnormal{ of }1's \textnormal{ among the first } i_k \textnormal{ letters of } \mathsf{b}\,\mathsf{r}_\mathsf{B} -\#\textnormal{ of }1's \textnormal{ among the first } j_k 
\textnormal{ letters of } \mathsf{l}_\mathsf{T}\,\mathsf{t}.
\notag\end{align}
Thus, 
\begin{align}
\sum\limits_{k=1}^{\vert\mathsf{l}_\mathsf{T}\vert_0+\vert\mathsf{t}\vert_0}(i_k-j_k)=d(\mathsf{b}\mathsf{r}_\mathsf{B})-d(\mathsf{l}_\mathsf{T}\mathsf{t})= 
d(\mathsf{r}_\mathsf{B})+d(\mathsf{b})+\vert\mathsf{r}_\mathsf{B}\vert_0\vert\mathsf{b}\vert_1-d(\mathsf{l}_\mathsf{T})-d(\mathsf{t})-\vert\mathsf{l}_\mathsf{T}\vert_1\vert\mathsf{t}\vert_0,
\notag\end{align}
what proves the first identity. The second identity follows analogously.
\end{proof}

\subsection{The interpretation of $d(\mathsf{r}_\mathsf{B})+d(\mathsf{b})+d(\mathsf{l}_\mathsf{B})-d(\mathsf{l}_\mathsf{T})-d(\mathsf{t})-d(\mathsf{r}_\mathsf{T})-
\vert\mathsf{l}_\mathsf{T}\vert_1\vert\mathsf{t}\vert_0-\vert\mathsf{t}\vert_1\vert\mathsf{r}_\mathsf{T}\vert_0-\vert\mathsf{r}_\mathsf{B}\vert_0\vert\mathsf{l}_\mathsf{B}\vert_1$}\label{Sec:Combinatorial_Interpretation_HFPL}
In this subsection, it will be shown that given an oriented HFPL with boundary $(\mathsf{l}_\mathsf{T},\mathsf{t},\mathsf{r}_\mathsf{T};\mathsf{r}_\mathsf{B},\mathsf{b},\mathsf{l}_\mathsf{B})$ by the quantity
$d(\mathsf{r}_\mathsf{B})+d(\mathsf{b})+d(\mathsf{l}_\mathsf{B})-d(\mathsf{l}_\mathsf{T})-d(\mathsf{t})-d(\mathsf{r}_\mathsf{T})-\vert\mathsf{l}_\mathsf{T}\vert_1\vert\mathsf{t}\vert_0-\vert\mathsf{t}\vert_1\vert\mathsf{r}_\mathsf{T}\vert_0-\vert\mathsf{r}_\mathsf{B}\vert_0\vert\mathsf{l}_\mathsf{B}\vert_1$
the occurrences of certain local patterns are counted. Throughout this subsection, the numbers of occurrences of the local configurations $\vcenter{\hbox{\includegraphics[width=.1\textwidth]{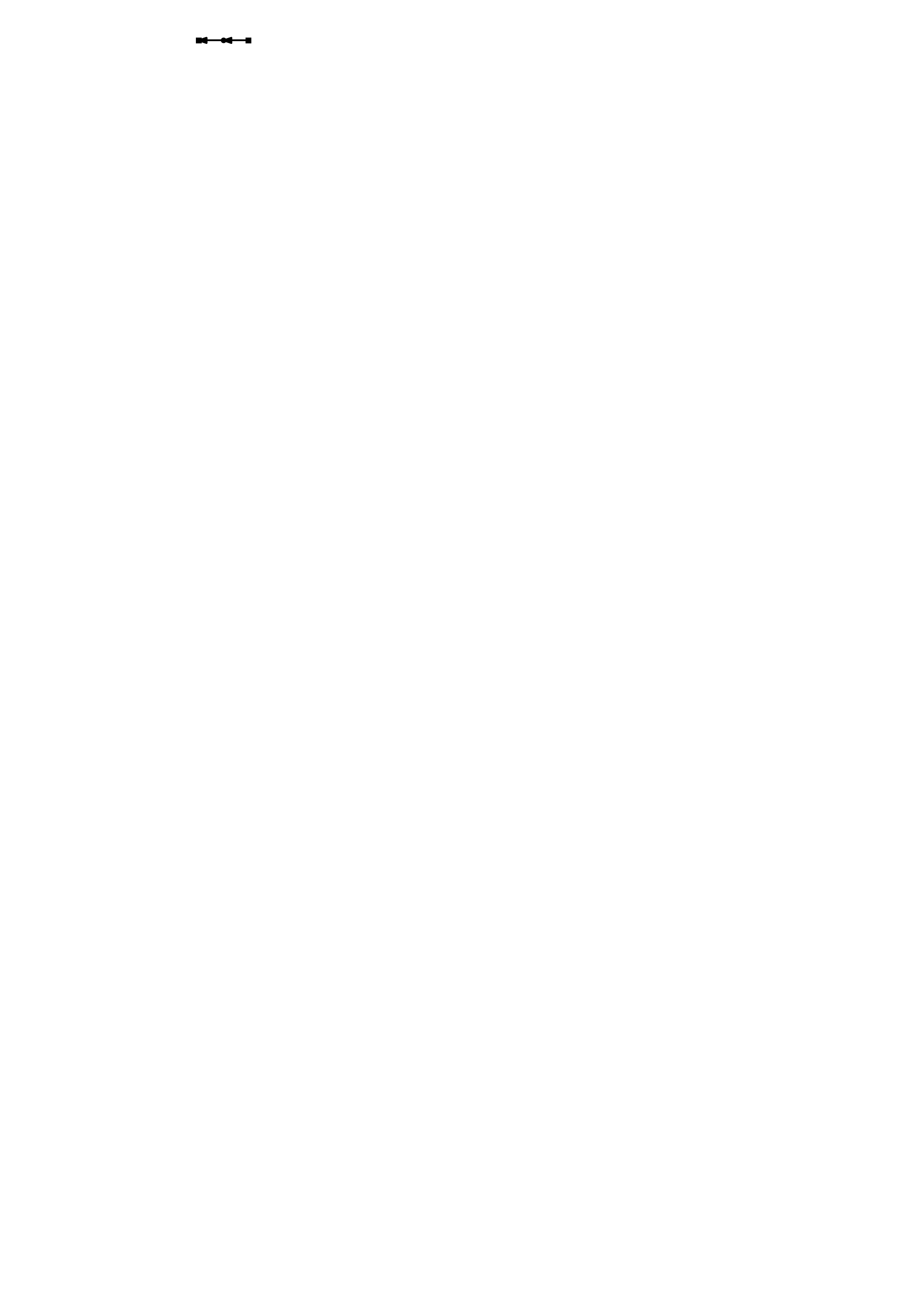}}}$, etc.
are denoted by $\vcenter{\hbox{\includegraphics[width=.1\textwidth]{Expose7}}}$, etc.

\begin{Thm}\label{Thm:HFPL_Comb_Int}
For any oriented HFPL in $\overrightarrow{H}_{\mathsf{l}_\mathsf{T},\mathsf{t},\mathsf{r}_\mathsf{T}}^{\mathsf{r}_\mathsf{B},\mathsf{b},\mathsf{l}_\mathsf{B}}$ the following formula holds:
\begin{align}
 d(\mathsf{r}_\mathsf{B})+d(\mathsf{b})+d(\mathsf{l}_\mathsf{B}) & -d(\mathsf{l}_\mathsf{T})-d(\mathsf{t})-d(\mathsf{r}_\mathsf{T})-\vert\mathsf{l}_\mathsf{T}\vert_1\vert\mathsf{t}\vert_0-\vert\mathsf{t}\vert_1\vert\mathsf{r}_\mathsf{T}\vert_0-\vert\mathsf{r}_\mathsf{B}\vert_0\vert\mathsf{l}_\mathsf{B}\vert_1\notag\\
 & = \vcenter{\hbox{\includegraphics[width=.014\textwidth]{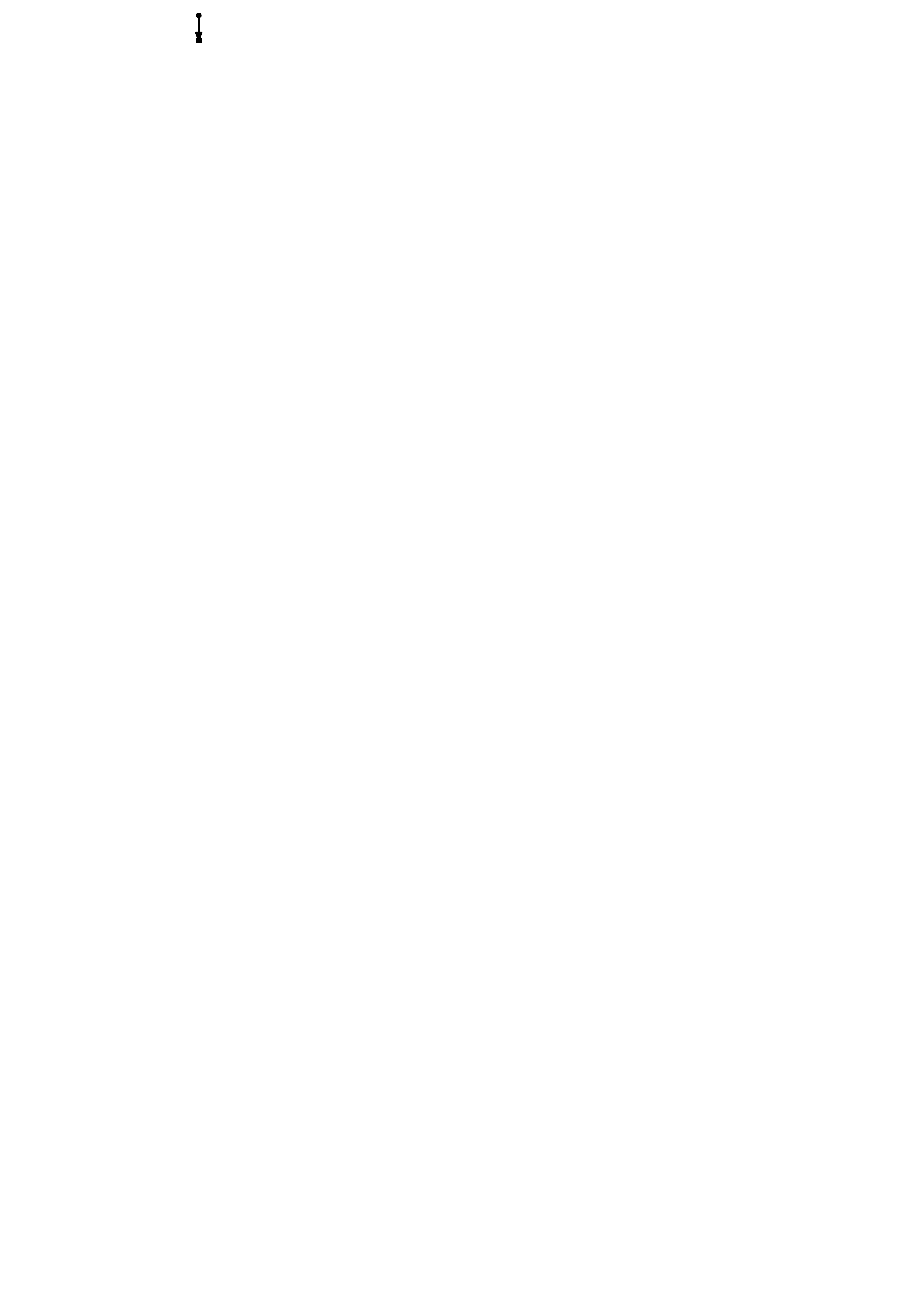}}}+ 
	      \vcenter{\hbox{\includegraphics[width=.014\textwidth]{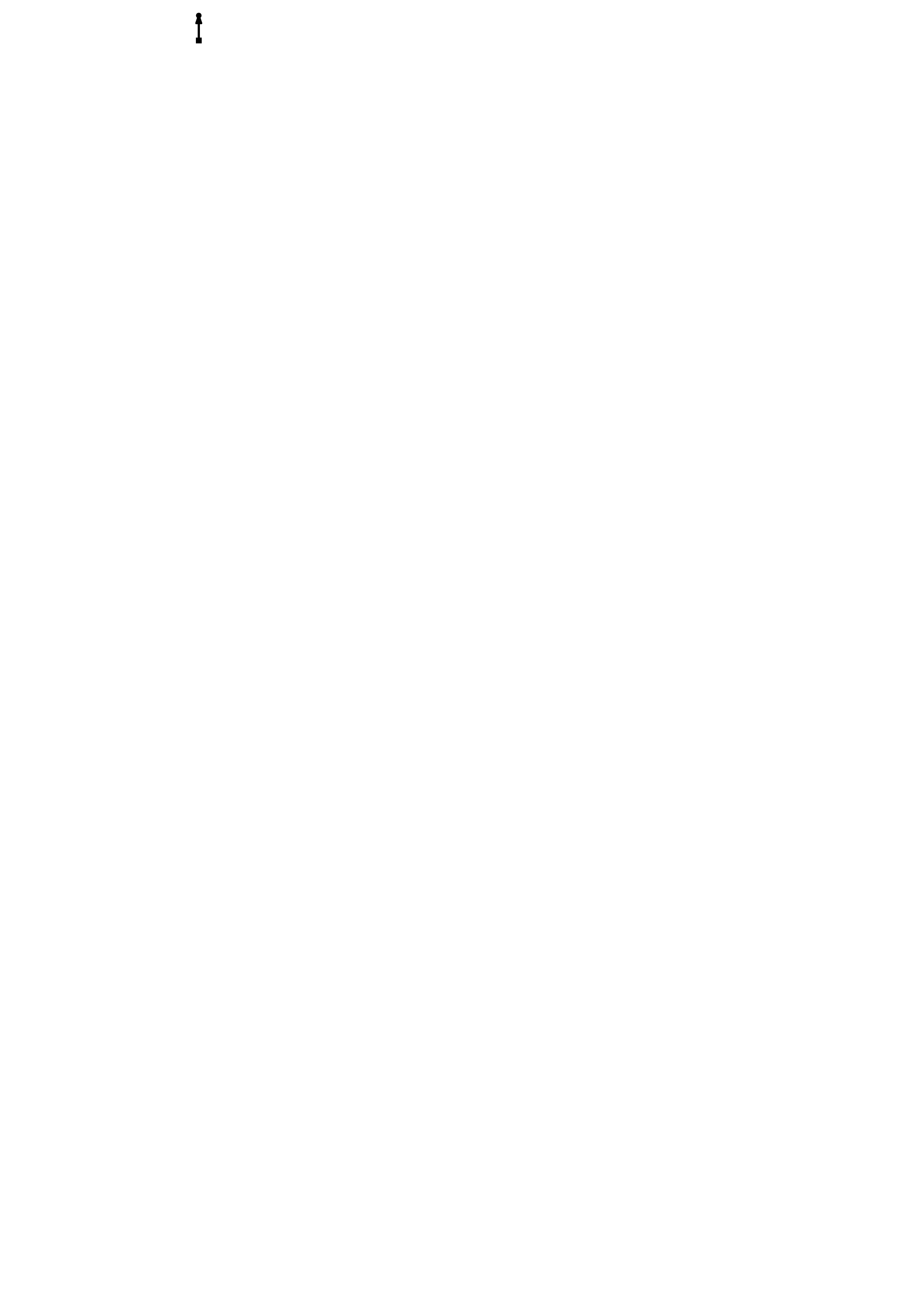}}}+\vcenter{\hbox{\includegraphics[width=.1\textwidth]{Expose7}}}+
	      \vcenter{\hbox{\includegraphics[width=.1\textwidth]{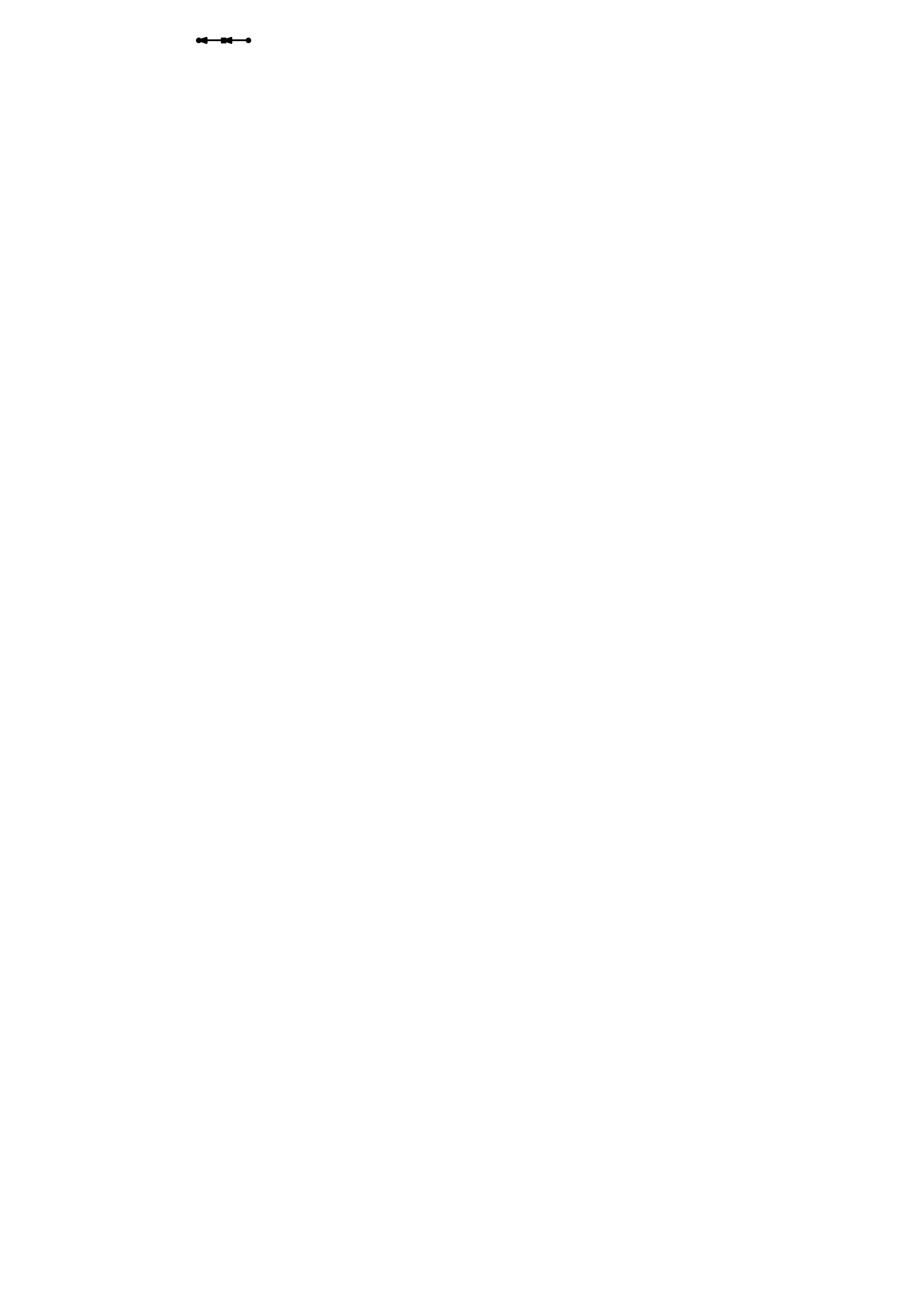}}}+\vcenter{\hbox{\includegraphics[width=.06\textwidth]{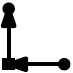}}}+
	      \vcenter{\hbox{\includegraphics[width=.06\textwidth]{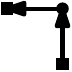}}}+\vcenter{\hbox{\includegraphics[width=.06\textwidth]{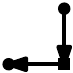}}}+
		\vcenter{\hbox{\includegraphics[width=.06\textwidth]{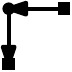}}}
\label{Eq:HFPL_Comb_Int}\end{align}
\end{Thm}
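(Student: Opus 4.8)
The plan is to derive \eqref{Eq:HFPL_Comb_Int} from Proposition~\ref{Prop:HFPL_Enumeration_Edges}, working throughout in the hexagonal blue-red path-tangle model furnished by Theorem~\ref{Thm:Bijection_HFPL_path-tangle}. Abbreviate the two left-hand sides of Proposition~\ref{Prop:HFPL_Enumeration_Edges} by $A=d(\mathsf{b}\,\mathsf{r}_\mathsf{B})-d(\mathsf{l}_\mathsf{T}\,\mathsf{t})$ and $B=d(\mathsf{l}_\mathsf{B}\,\mathsf{b})-d(\mathsf{t}\,\mathsf{r}_\mathsf{T})$, so that $A$ counts the blue left- and down-steps and $B$ the red right- and down-steps. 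First I would expand $A$ and $B$ using $d(\alpha\beta)=d(\alpha)+d(\beta)+\vert\alpha\vert_1\vert\beta\vert_0$ and compare with $exc(\mathsf{l}_\mathsf{T},\mathsf{t},\mathsf{r}_\mathsf{T};\mathsf{r}_\mathsf{B},\mathsf{b},\mathsf{l}_\mathsf{B})$; a direct computation gives $exc=A+B-d(\mathsf{b})+d(\mathsf{t})-\vert\mathsf{b}\vert_1\vert\mathsf{r}_\mathsf{B}\vert_0-\vert\mathsf{b}\vert_0\vert\mathsf{l}_\mathsf{B}\vert_1-\vert\mathsf{r}_\mathsf{B}\vert_0\vert\mathsf{l}_\mathsf{B}\vert_1$, and with the cardinality identities of Theorem~\ref{Thm:NecCondHFPL}(1) the correction collapses to $d(\mathsf{t})-\bigl(d(\mathsf{l}_\mathsf{B}\,\mathsf{b}\,\mathsf{r}_\mathsf{B})-d(\mathsf{l}_\mathsf{B})-d(\mathsf{r}_\mathsf{B})\bigr)$. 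Hence proving the theorem reduces to recognizing that, after adding this correction to $A+B$, what survives is exactly the total number of the eight local configurations of \eqref{Eq:HFPL_Comb_Int}.

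I would then pass to the local picture via the correspondence of Figure~\ref{Fig:Bijection:HFPL_Pathtangles}. Every blue and red step counted by $A$ and $B$ sits inside a bounded local configuration around a vertex of the oriented HFPL, and each of the eight configurations of \eqref{Eq:HFPL_Comb_Int} is itself of this type. Because each vertex has degree two with a prescribed orientation, and because the path-tangle conditions forbid a blue diagonal from crossing a red diagonal and force every horizontal step to share its midpoint with a step of the other colour, only finitely many local patterns can occur around a vertex. The plan is to tabulate them all, record for each its contribution to $A+B$, and show that the patterns left uncancelled after subtracting the correction are precisely the eight configurations of \eqref{Eq:HFPL_Comb_Int}. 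Summing over all vertices and substituting Proposition~\ref{Prop:HFPL_Enumeration_Edges} then identifies the count of these eight configurations with $exc$.

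The hard part will be this exhaustive local bookkeeping, and especially its interaction with the boundary. For interior vertices I expect the enumeration to run parallel to the oriented-TFPL computation in the proof of Theorem~4.3 in \cite{TFPL}: each down-step or turn of one colour is paired with a step of the other colour, and the ``inert'' pairings are exactly those removed by the correction. What is genuinely new is that the hexagon has three distinct boundary segments---$\mathcal{T}$ on the top, $\mathcal{R}_T\cup\mathcal{R}_B$ on the right, and $\mathcal{L}_T\cup\mathcal{L}_B$ on the left---whose incident steps meet the two step families differently, so the terms $d(\mathsf{t})$ and $d(\mathsf{l}_\mathsf{B}\,\mathsf{b}\,\mathsf{r}_\mathsf{B})-d(\mathsf{l}_\mathsf{B})-d(\mathsf{r}_\mathsf{B})$ in the correction must be matched against the steps touching these boundaries rather than against interior crossings. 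Verifying that the boundary contributions cancel exactly, so that the identity holds pointwise for every oriented HFPL and not merely on average, is the delicate point; once the local table is complete, the interior cancellation together with Proposition~\ref{Prop:HFPL_Enumeration_Edges} finishes the proof.
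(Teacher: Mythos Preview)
Your first paragraph is correct and matches the paper exactly: both you and the paper reduce the identity to showing that the correction term
\[
d(\mathsf{b})-d(\mathsf{t})+\vert\mathsf{b}\vert_0\vert\mathsf{l}_\mathsf{B}\vert_1+\vert\mathsf{r}_\mathsf{B}\vert_0(\vert\mathsf{b}\vert_1+\vert\mathsf{l}_\mathsf{B}\vert_1)
\]
cancels against part of the local count $A+B$ from Proposition~\ref{Prop:HFPL_Enumeration_Edges}, leaving the eight configurations of~\eqref{Eq:HFPL_Comb_Int}.

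Where your plan diverges from the paper is in how this correction term is interpreted combinatorially. You propose to treat it as a boundary contribution, to be ``matched against the steps touching these boundaries rather than against interior crossings.'' The paper does the opposite: Lemma~\ref{Lem:HFPL_Intersecting_pairs} shows that this quantity is exactly the number of \emph{intersecting} blue--red path pairs in the tangle, a purely interior phenomenon, and Lemma~\ref{Lem:HFPL_Intersecting_pairs_comb_int} then rewrites this count as a \emph{signed} sum of local configurations. The proof of~\eqref{Eq:HFPL_Comb_Int} is then three lines: expand the horizontal-edge counts by the vertex on the other end, average the two signed expressions from Lemma~\ref{Lem:HFPL_Intersecting_pairs_comb_int}, and subtract. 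No separate boundary analysis is needed at all.

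Your proposed route has a genuine gap. A pure vertex-by-vertex tabulation of local patterns cannot by itself produce the correction term, because that term depends only on the boundary words and is constant across all HFPLs with the given boundary, whereas any local tabulation sums quantities that vary with the configuration. One needs an identity explaining \emph{why} a particular signed combination of local counts is boundary-determined; in the paper this identity is supplied by the intersecting-pairs argument of Lemma~\ref{Lem:HFPL_Intersecting_pairs}. Without an analogue of that lemma (or some other global-to-local bridge), the ``boundary matching'' you describe cannot be carried out, and the appearance of minus signs in Lemma~\ref{Lem:HFPL_Intersecting_pairs_comb_int} shows that the cancellation is not the straightforward inert-pairing picture you sketch.
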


An immediate consequence of Theorem~\ref{Thm:HFPL_Comb_Int} is condition (3) of Theorem~\ref{Thm:NecCondHFPL}.
The proof of Theorem~\ref{Thm:HFPL_Comb_Int} is done in terms of blue-red path-tangles. To show (\ref{Eq:HFPL_Comb_Int}), a few identities for blue-red path-tangles are needed.

\begin{Def} In a blue-red path-tangle, a pair $(b,r)$ consisting of a blue path $b$ and a red path $r$ is said to be \textnormal{intersecting} if $b$ and $r$ intersect at least once.
\end{Def}

The number of intersecting pairs of a blue-red path-tangle in BlueRed($\mathsf{l}_\mathsf{T},\mathsf{t},\mathsf{r}_\mathsf{T};\mathsf{r}_\mathsf{B},\mathsf{r}_\mathsf{T},\mathsf{l}_\mathsf{B}$) on the one hand
can be derived from $\mathsf{l}_\mathsf{T},\mathsf{t},\mathsf{r}_\mathsf{T},\mathsf{r}_\mathsf{B},\mathsf{r}_\mathsf{T},\mathsf{l}_\mathsf{B}$ and on the other hand can be expressed in terms of numbers of occurrences
of certain local configurations.

\begin{Lemma}\label{Lem:HFPL_Intersecting_pairs} For any blue-red path-tangle in BlueRed$(\mathsf{l}_\mathsf{T},\mathsf{t},\mathsf{r}_\mathsf{T};\mathsf{r}_\mathsf{B},\mathsf{b},\mathsf{l}_\mathsf{B})$, the number of 
its intersecting pairs of paths equals
\begin{equation}
d(\mathsf{b})-d(\mathsf{t})+\vert\mathsf{b}\vert_0\vert\mathsf{l}_\mathsf{B}\vert_1+\vert\mathsf{r}_\mathsf{B}\vert_0(\vert\mathsf{b}\vert_1+\vert\mathsf{l}_\mathsf{B}\vert_1).
\label{Eq:HFPL_Intersecting_pairs}
\end{equation} 
\end{Lemma}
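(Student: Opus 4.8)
The plan is to show that whether a given blue path $b$ and red path $r$ form an intersecting pair is determined solely by the cyclic positions of their four endpoints on the boundary $\partial H^{K,L,M,N}$, and then to count the pairs whose endpoints interleave. All four endpoints of $b$ and $r$ lie on $\partial H^{K,L,M,N}$, which bounds a topological disk, and $b$, $r$ are two properly embedded arcs in it. By the standard fact that the mod-$2$ intersection number of two such arcs equals $1$ exactly when their endpoints interleave around $\partial H^{K,L,M,N}$, an interleaving pair must cross at least once. The crux is the converse refinement: a \emph{non}-interleaving pair crosses \emph{zero} times rather than merely an even number of times. Here I would use the geometry of the tangle. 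Read as a function of $x$, every step of a blue path and of a red path has slope in $\{-1,0,1\}$; condition~(1) of the definition of a blue-red path-tangle forbids a diagonal step of $R$ from crossing a diagonal step of $B$, and condition~(2) forces all common points to be midpoints of horizontal steps. One checks that under these constraints a blue--red pair cannot cross twice, so the number of crossings is $0$ or $1$ and is governed exactly by the parity. Hence a pair intersects if and only if its endpoints interleave, and this depends only on the boundary, not on the chosen tangle.

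It then remains to count the interleaving pairs combinatorially. Recall that the blue endpoints $D_k,E_k$ occupy the $0$-positions of $\mathsf{b}\,\mathsf{r}_\mathsf{B}$ and of $\mathsf{l}_\mathsf{T}\,\mathsf{t}$, while the red endpoints $D'_k,E'_k$ occupy the $1$-positions of $\mathsf{l}_\mathsf{B}\,\mathsf{b}$ and of $\mathsf{t}\,\mathsf{r}_\mathsf{T}$; moreover the $k$-th blue (resp.\ red) path joins the $k$-th starting point to the $k$-th ending point, by the non-crossing property within each color. Reading $\partial H^{K,L,M,N}$ cyclically as $\mathsf{l}_\mathsf{T},\mathsf{t},\mathsf{r}_\mathsf{T},\mathsf{r}_\mathsf{B},\mathsf{b},\mathsf{l}_\mathsf{B}$, the blue top endpoints cluster in the contiguous arc $\mathsf{l}_\mathsf{T}\,\mathsf{t}\,\mathsf{r}_\mathsf{T}$ and the blue and red bottom endpoints cluster in the contiguous arc $\mathsf{r}_\mathsf{B}\,\mathsf{b}\,\mathsf{l}_\mathsf{B}$, with blue occupying the left part and red the right part of each cluster, overlapping precisely on $\mathsf{t}$ and on $\mathsf{b}$.

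With the endpoints localized this way, I would run a case analysis on which of the six segments each of the four endpoints lies in, translating interleaving into an explicit inequality on the corresponding indices. When a blue and a red endpoint lie in segments that do not overlap, interleaving holds for every such pair or for none; summing the former contributes the product terms $\vert\mathsf{b}\vert_0\vert\mathsf{l}_\mathsf{B}\vert_1$ and $\vert\mathsf{r}_\mathsf{B}\vert_0(\vert\mathsf{b}\vert_1+\vert\mathsf{l}_\mathsf{B}\vert_1)$. When the two endpoints share the segment $\mathsf{b}$ (where blue sits at the $0$'s and red at the $1$'s) interleaving reduces to an inversion condition inside $\mathsf{b}$, contributing $+d(\mathsf{b})$; when they share $\mathsf{t}$ the pairs that \emph{fail} to interleave are the inversions of $\mathsf{t}$, contributing $-d(\mathsf{t})$. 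Collecting all contributions yields $d(\mathsf{b})-d(\mathsf{t})+\vert\mathsf{b}\vert_0\vert\mathsf{l}_\mathsf{B}\vert_1+\vert\mathsf{r}_\mathsf{B}\vert_0(\vert\mathsf{b}\vert_1+\vert\mathsf{l}_\mathsf{B}\vert_1)$, as claimed.

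The main obstacle is the first step, namely the rigidity statement that a blue--red pair intersects if and only if its endpoints interleave. The topological argument only supplies the parity of the crossing number, so the real work is to exclude pairs that cross an even but positive number of times; this is exactly where conditions~(1) and~(2) and the restricted slopes of the paths must be exploited, in parallel with the proof of Theorem~4.3 in \cite{TFPL}. A secondary, purely bookkeeping difficulty is fixing the orientation of $\partial H^{K,L,M,N}$ and the within-segment indexing so that the signs in the final count come out correctly.
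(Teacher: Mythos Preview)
Your approach is essentially the same as the paper's: both hinge on the fact that whether a blue path and a red path intersect is determined entirely by the positions of their endpoints on the boundary, and then both perform a case analysis on those positions to obtain the stated expression. The paper organises the count by fixing a red path $r$ with index $k$, splitting into four cases according to where $k$ sits relative to $\vert\mathsf{t}\vert_1$ and $\vert\mathsf{l}_\mathsf{B}\vert_1$ (equivalently, which sides of the hexagon carry $D'_k$ and $E'_k$), and writing down directly the number of blue paths that meet $r$ in each case; summing over $k$ gives the formula. Your organisation --- first isolate the interleaving criterion explicitly, then run a case split on the boundary segments --- amounts to the same computation, reparametrised.

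What you call the ``main obstacle'' (ruling out a non-interleaving pair that nevertheless intersects) is precisely the step the paper leaves implicit when it asserts the per-case counts without further comment; your instinct that this is where conditions~(1) and~(2) and the restricted step set enter, in parallel with the argument in \cite{TFPL}, is correct. One small slip: in your boundary description you write ``the blue top endpoints cluster in the contiguous arc $\mathsf{l}_\mathsf{T}\,\mathsf{t}\,\mathsf{r}_\mathsf{T}$'' where you mean the blue \emph{and red} top endpoints; also keep in mind that a blue path's top and bottom segments are not independent (both are governed by the same index $k$), so your segment-by-segment count must respect that coupling --- this is handled automatically in the paper's indexing by $k$.
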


\begin{proof} 
Let $r$ be a red path of a blue-red path-tangle $(B,R)$ in BlueRed$(\mathsf{l}_\mathsf{T},\mathsf{t},\mathsf{r}_\mathsf{T};\mathsf{r}_\mathsf{B},\mathsf{b},\mathsf{l}_\mathsf{B})$. 
It is started with counting the blue paths that intersect with $r$. By the definition of blue-red path-tangles in Section~\ref{Subsec:PathTanglesHFPL}, there exists a 
$k\in\{1,2,\dots,\vert\mathsf{t}\vert_1+\vert\mathsf{r}_\mathsf{T}\vert_1\}$ such that $r$ has starting point $D_k'$ and ending point $E_k'$. The following three cases for $k$ are distinguished: 
$k\leq \operatorname{min}\{\vert\mathsf{t}\vert_1,\vert\mathsf{l}_\mathsf{B}\vert_1\}$, $k> \operatorname{max}\{\vert\mathsf{t}\vert_1,\vert\mathsf{l}_\mathsf{B}\vert_1\}$, 
$\operatorname{min}\{\vert\mathsf{t}\vert_1,\vert\mathsf{l}_\mathsf{B}\vert_1\}<k\leq \operatorname{max}\{\vert\mathsf{t}\vert_1,\vert\mathsf{l}_\mathsf{B}\vert_1\}$. 
In the case when $k\leq \operatorname{min}\{\vert\mathsf{t}\vert_1,\vert\mathsf{l}_\mathsf{B}\vert_1\}$ the number of blue paths, that intersect with $r$, equals 
\begin{equation}
\vert\mathsf{l}_\mathsf{T}\vert_0+\vert\mathsf{t}\vert_0-\#\textnormal{ of 0's among the last } (L-j'_k)\textnormal{ letters of } \mathsf{t}. 
\notag\end{equation}
Furthermore, in the case when $k>\textnormal{max}\{\vert\mathsf{t}\vert_1,\vert\mathsf{l}_\mathsf{B}\vert_1\}$, the number of blue paths, that intersect with $r$, equals
\begin{equation}
\vert\mathsf{r}_\mathsf{B}\vert_0+\vert\mathsf{b}\vert_0-\#\textnormal{ of 0's among the last } (L+M-j'_k) 
\textnormal{ letters of } \mathsf{b}.
\notag\end{equation}
Finally, in the case when $\operatorname{min}\{\vert\mathsf{t}\vert_1,\vert\mathsf{l}_\mathsf{B}\vert_1\}<k\leq \operatorname{max}\{\vert\mathsf{t}\vert_1,\vert\mathsf{l}_\mathsf{B}\vert_1\}$,
another destinction is necessary: whether $\vert\mathsf{t}\vert_1<\vert\mathsf{l}_\mathsf{B}\vert_1$ or whether $\vert\mathsf{l}_\mathsf{B}\vert_1<\vert\mathsf{t}\vert_1$.
If $\vert\mathsf{t}\vert_1< \vert\mathsf{l}_\mathsf{B}\vert_1$, the number of blue paths, that intersect with $r$, equals
\begin{equation}
\vert\mathsf{l}_\mathsf{T}\vert_0+\vert\mathsf{t}\vert_0.
\notag\end{equation}
On the other hand, if $\vert\mathsf{l}_\mathsf{B}\vert_1< \vert\mathsf{t}\vert_1$, the number of blue paths, that intersects with $r$, equals
\begin{equation}
 \vert\mathsf{r}_\mathsf{B}\vert_0+\#\textnormal{ of 0's among the last } (L+M-j'_k) \textnormal{ letters of }\mathsf{b} -\#\textnormal{ of 0's among the last }(L-j'_k)\textnormal{ letters of } \mathsf{t}.
\notag\end{equation}
By summing the numbers of blue paths that intersect with $r$ over all red paths $r$ of $(B,R)$, the number of intersecting pairs of $(B,R)$ is obtained and it equals the quantity in (\ref{Eq:HFPL_Intersecting_pairs}).
\end{proof}

Expressing the number of intersecting pairs of a blue-red path-tangle in terms of numbers of occurrences of certain local patterns gives, together with Lemma~\ref{Lem:HFPL_Intersecting_pairs}, the following identities:

\begin{Lemma}\label{Lem:HFPL_Intersecting_pairs_comb_int}
For any oriented HFPL and for any blue-red path-tangle with boundary $(\mathsf{l}_\mathsf{T},\mathsf{t},\mathsf{r}_\mathsf{T};\mathsf{r}_\mathsf{B},\mathsf{b},\mathsf{l}_\mathsf{B})$ respectively, one has
\begin{align}
d(\mathsf{b}) -d(\mathsf{t})+\vert\mathsf{b}\vert_0\vert\mathsf{l}_\mathsf{B}\vert_1+\vert\mathsf{r}_\mathsf{B}\vert_0(\vert\mathsf{b}\vert_1+\vert\mathsf{l}_\mathsf{B}\vert_1) & 
= \vcenter{\hbox{\includegraphics[width=.05\textwidth]{Int_Pairs_1}}}
+\vcenter{\hbox{\includegraphics[width=.05\textwidth]{Int_Pairs_2}}}-\vcenter{\hbox{\includegraphics[width=.05\textwidth]{Int_Pairs_3}}}-\vcenter{\hbox{\includegraphics[width=.05\textwidth]{Int_Pairs_4}}}\notag\\
& = \vcenter{\hbox{\includegraphics[width=.085\textwidth]{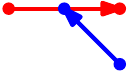}}}+\vcenter{\hbox{\includegraphics[width=.085\textwidth]{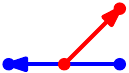}}}-\vcenter{\hbox{\includegraphics[width=.085\textwidth]{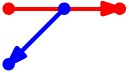}}}
-\vcenter{\hbox{\includegraphics[width=.085\textwidth]{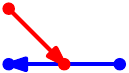}}}, \textnormal{ } and \notag\\
\notag\\
d(\mathsf{b}) -d(\mathsf{t})+\vert\mathsf{b}\vert_0\vert\mathsf{l}_\mathsf{B}\vert_1+\vert\mathsf{r}_\mathsf{B}\vert_0(\vert\mathsf{b}\vert_1+\vert\mathsf{l}_\mathsf{B}\vert_1) & = 
\vcenter{\hbox{\includegraphics[width=.05\textwidth]{Int_Pairs_9}}}
+\vcenter{\hbox{\includegraphics[width=.05\textwidth]{Int_Pairs_10}}}-\vcenter{\hbox{\includegraphics[width=.05\textwidth]{Int_Pairs_11}}}-\vcenter{\hbox{\includegraphics[width=.05\textwidth]{Int_Pairs_12}}}\notag\\
& = \vcenter{\hbox{\includegraphics[width=.085\textwidth]{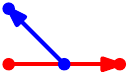}}}+\vcenter{\hbox{\includegraphics[width=.085\textwidth]{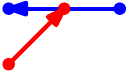}}}-\vcenter{\hbox{\includegraphics[width=.085\textwidth]{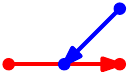}}}
-\vcenter{\hbox{\includegraphics[width=.085\textwidth]{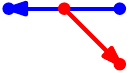}}}.
\notag\end{align}
\end{Lemma}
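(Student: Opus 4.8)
The plan is to combine Lemma~\ref{Lem:HFPL_Intersecting_pairs} with a purely local computation. By Lemma~\ref{Lem:HFPL_Intersecting_pairs} the left-hand side $d(\mathsf{b})-d(\mathsf{t})+\vert\mathsf{b}\vert_0\vert\mathsf{l}_\mathsf{B}\vert_1+\vert\mathsf{r}_\mathsf{B}\vert_0(\vert\mathsf{b}\vert_1+\vert\mathsf{l}_\mathsf{B}\vert_1)$ is exactly the number of intersecting pairs of the path-tangle, so it suffices to show that each of the four signed sums of local-configuration counts on the right also equals this number. I would treat the two equalities within each chain separately: the equality between the narrow (path-tangle) configurations and the wide (HFPL) configurations is not a new computation but a translation under the bijection of Theorem~\ref{Thm:Bijection_HFPL_path-tangle}, which sends each local picture around a blue/red vertex to the corresponding local picture around the adjacent edges of the oriented HFPL, so that the two counts agree term by term. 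This reduces everything to proving one identity of the form \emph{number of intersecting pairs $=$ signed sum of four path-tangle configurations}.

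For that identity I would argue by traversing the red paths. Since every blue path uses only the steps $(-1,1),(-1,-1),(-2,0)$ and every red path only $(1,1),(1,-1),(2,0)$, each path is strictly monotone in the $x$-coordinate and hence is the graph of a single-valued function of $x$; a blue and a red path therefore meet only at isolated points, and by condition~(1) in the definition of a path-tangle these meetings occur at the middle points of horizontal steps rather than as transversal crossings of two diagonal steps. Fixing a red path $r$, I would walk along $r$ from its starting point $D'_k$ to its endpoint $E'_k$ and record, at each point where a blue path is met, which of the four local configurations occurs; assigning the signs $+,+,-,-$ as in the statement, the contributions telescope so that the signed total along $r$ equals the number of blue paths forming an intersecting pair with $r$. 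Summing over all red paths yields the first chain; the second chain is obtained identically by instead walking along the blue paths, or equivalently by invoking the horizontal-reflection symmetry of oriented HFPLs. Throughout, the argument runs parallel to the proof of Theorem~4.3 in \cite{TFPL}.

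The main obstacle is the local case analysis underlying the telescoping claim. One must enumerate all ways a blue and a red path can meet at a shared vertex---distinguishing the two orientations of each path through the vertex and the incoming versus outgoing sides---verify that the configurations carrying a $+$ sign mark where $r$ \emph{enters} the region cut out by a given blue path while those carrying a $-$ sign mark where it \emph{leaves}, and check that for a pair meeting more than once the intermediate contributions cancel so that the net contribution is the $0/1$ indicator of intersection. Handling the endpoints $D'_k,E'_k$, where a path begins or ends on the boundary of $H^{K,L,M,N}$ rather than at an interior meeting, requires separate care to confirm that they contribute no spurious terms; this is precisely the point at which the monotonicity and the non-crossing conditions~(1) and~(2) must be used to exclude uncontrolled multiple crossings.
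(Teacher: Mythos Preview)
Your approach is essentially the one the paper takes: it introduces the lemma with the sentence ``Expressing the number of intersecting pairs of a blue-red path-tangle in terms of numbers of occurrences of certain local patterns gives, together with Lemma~\ref{Lem:HFPL_Intersecting_pairs}, the following identities,'' and then simply remarks that the statement generalizes Lemma~4.7 in \cite{TFPL} without spelling out any further details. Your telescoping/walk-along-a-red-path sketch is exactly the local computation behind that reference, and your use of the bijection of Theorem~\ref{Thm:Bijection_HFPL_path-tangle} to pass between the HFPL and path-tangle pictures is the intended translation; the only minor discrepancy is that the precise antecedent in \cite{TFPL} is Lemma~4.7 rather than Theorem~4.3.
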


Lemma~\ref{Lem:HFPL_Intersecting_pairs_comb_int} generalizes Lemma~4.7 in \cite{TFPL}, which states an analogous identity for oriented TFPLs, to oriented HFPLs. 
Now, everything that is needed to prove Theorem~\ref{Thm:HFPL_Comb_Int} is provided.

\begin{proof}[Proof of Theorem~\ref{Thm:HFPL_Comb_Int}]
As a start, notice that Proposition~\ref{Prop:HFPL_Enumeration_Edges} implies the following identity:
\begin{align}
d(\mathsf{r}_\mathsf{B})+d(\mathsf{b})+d(\mathsf{l}_\mathsf{B})-d(\mathsf{l}_\mathsf{T})-d(\mathsf{t})- & d(\mathsf{r}_\mathsf{T})-\vert\mathsf{l}_\mathsf{T}\vert_1\vert\mathsf{t}\vert_0-\vert\mathsf{t}\vert_1\vert\mathsf{r}_\mathsf{T}\vert_0-\vert\mathsf{r}_\mathsf{B}\vert_0\vert\mathsf{l}_\mathsf{B}\vert_1\notag\\
& =\vcenter{\hbox{\includegraphics[width=.012\textwidth]{blue_down_HFPL}}}+\vcenter{\hbox{\includegraphics[width=.012\textwidth]{red_down_HFPL}}}+\vcenter{\hbox{\includegraphics[width=.05\textwidth]{red_right_HFPL}}}+
\vcenter{\hbox{\includegraphics[width=.05\textwidth]{blue_left_HFPL}}}-(d(\mathsf{b})-d(\mathsf{t})+\vert\mathsf{b}\vert_0\vert\mathsf{l}_\mathsf{B}\vert_1+\vert\mathsf{r}_\mathsf{B}\vert_0(\vert\mathsf{l}_\mathsf{B}\vert_1+\vert\mathsf{b}\vert_1)).
\notag\end{align}
It remains to consider the right hand side of the previous equation. 
The number $\vcenter{\hbox{\includegraphics[width=.05\textwidth]{red_right_HFPL}}}+
\vcenter{\hbox{\includegraphics[width=.05\textwidth]{blue_left_HFPL}}}$ can be expressed in the following way:
\begin{align}
\vcenter{\hbox{\includegraphics[width=.05\textwidth]{red_right_HFPL}}}+\vcenter{\hbox{\includegraphics[width=.05\textwidth]{blue_left_HFPL}}}& = 
\frac{1}{2}(\vcenter{\hbox{\includegraphics[width=.05\textwidth]{red_right_HFPL}}}+\vcenter{\hbox{\includegraphics[width=.05\textwidth]{red_right_HFPL}}})+\frac{1}{2}
(\vcenter{\hbox{\includegraphics[width=.05\textwidth]{blue_left_HFPL}}}+
\vcenter{\hbox{\includegraphics[width=.05\textwidth]{blue_left_HFPL}}})\notag\\
& = \frac{1}{2}\left(\vcenter{\hbox{\includegraphics[width=.05\textwidth]{Int_Pairs_11}}}+\vcenter{\hbox{\includegraphics[width=.1\textwidth]{Expose8}}}+\vcenter{\hbox{\includegraphics[width=.05\textwidth]{Int_Pairs_1}}}
+\vcenter{\hbox{\includegraphics[width=.05\textwidth]{Int_Pairs_9}}}+\vcenter{\hbox{\includegraphics[width=.1\textwidth]{Expose7}}}+\vcenter{\hbox{\includegraphics[width=.05\textwidth]{Int_Pairs_3}}}
\right)\notag\\
& + \frac{1}{2}\left(\vcenter{\hbox{\includegraphics[width=.05\textwidth]{Int_Pairs_2}}}+\vcenter{\hbox{\includegraphics[width=.1\textwidth]{Expose7}}}
+\vcenter{\hbox{\includegraphics[width=.05\textwidth]{Int_Pairs_12}}}+\vcenter{\hbox{\includegraphics[width=.05\textwidth]{Int_Pairs_4}}}
+\vcenter{\hbox{\includegraphics[width=.1\textwidth]{Expose8}}}+\vcenter{\hbox{\includegraphics[width=.05\textwidth]{Int_Pairs_10}}}
\right).
\label{Eq:Proof_Comb_Int_HFPL_1}\end{align}
On the other hand, by Lemma~\ref{Lem:HFPL_Intersecting_pairs_comb_int},
\begin{align}
d(\mathsf{b}) -d(\mathsf{t})+ & \vert\mathsf{b}\vert_0\vert\mathsf{l}_\mathsf{B}\vert_1+\vert\mathsf{r}_\mathsf{B}\vert_0(\vert\mathsf{b}\vert_1+\vert\mathsf{l}_\mathsf{B}\vert_1)\notag\\
& = \frac{1}{2}(d(\mathsf{b}) -d(\mathsf{t})+\vert\mathsf{b}\vert_0\vert\mathsf{l}_\mathsf{B}\vert_1+\vert\mathsf{r}_\mathsf{B}\vert_0(\vert\mathsf{b}\vert_1+\vert\mathsf{l}_\mathsf{B}\vert_1)+
d(\mathsf{b}) -d(\mathsf{t})+\vert\mathsf{b}\vert_0\vert\mathsf{l}_\mathsf{B}\vert_1+\vert\mathsf{r}_\mathsf{B}\vert_0(\vert\mathsf{b}\vert_1+\vert\mathsf{l}_\mathsf{B}\vert_1))\notag\\
& = \frac{1}{2}\left(
\vcenter{\hbox{\includegraphics[width=.05\textwidth]{Int_Pairs_1}}}+\vcenter{\hbox{\includegraphics[width=.05\textwidth]{Int_Pairs_2}}}
-\vcenter{\hbox{\includegraphics[width=.05\textwidth]{Int_Pairs_3}}}-\vcenter{\hbox{\includegraphics[width=.05\textwidth]{Int_Pairs_4}}}+
\vcenter{\hbox{\includegraphics[width=.05\textwidth]{Int_Pairs_9}}}+\vcenter{\hbox{\includegraphics[width=.05\textwidth]{Int_Pairs_10}}}-
\vcenter{\hbox{\includegraphics[width=.05\textwidth]{Int_Pairs_11}}}-\vcenter{\hbox{\includegraphics[width=.05\textwidth]{Int_Pairs_12}}}
\right).
\label{Eq:Proof_Comb_Int_HFPL_2}\end{align}
Finally, subtracting (\ref{Eq:Proof_Comb_Int_HFPL_2}) from (\ref{Eq:Proof_Comb_Int_HFPL_1}) gives the identity of Theorem~\ref{Thm:HFPL_Comb_Int}.
\end{proof}

\section{Configurations of small excess}\label{Sec:Configurations_small_excess_HFPL}

By Theorem~\ref{Thm:NecCondHFPL}(3), there is no oriented HFPL with boundary $(\mathsf{l}_\mathsf{T},\mathsf{t},\mathsf{r}_\mathsf{T};\mathsf{r}_\mathsf{B},\mathsf{b},\mathsf{l}_\mathsf{B})$ unless the integer  
$d(\mathsf{r}_\mathsf{B})+d(\mathsf{b})+d(\mathsf{l}_\mathsf{B}) -d(\mathsf{l}_\mathsf{T})-d(\mathsf{t})-d(\mathsf{r}_\mathsf{T})-\vert\mathsf{l}_\mathsf{T}\vert_1\vert\mathsf{t}\vert_0-
\vert\mathsf{t}\vert_1 \vert\mathsf{r}_\mathsf{T}\vert_0-\vert\mathsf{r}_\mathsf{B}\vert_0\vert\mathsf{l}_\mathsf{B}\vert_1$ is non-negative.

\begin{Def}
Given a sextuple $(\mathsf{l}_\mathsf{T},\mathsf{t},\mathsf{r}_\mathsf{T};\mathsf{r}_\mathsf{B},\mathsf{b},\mathsf{l}_\mathsf{B})$ of words of length $(K,L,M;N,K+L-N,M+N-K)$ respectively, 
its \textnormal{excess} is defined as
\begin{equation}
exc(\mathsf{l}_\mathsf{T},\mathsf{t},\mathsf{r}_\mathsf{T};\mathsf{r}_\mathsf{B},\mathsf{b},\mathsf{l}_\mathsf{B})= d(\mathsf{r}_\mathsf{B})+d(\mathsf{b})+d(\mathsf{l}_\mathsf{B}) -d(\mathsf{l}_\mathsf{T})-d(\mathsf{t})-d(\mathsf{r}_\mathsf{T})-\vert\mathsf{l}_\mathsf{T}\vert_1\vert\mathsf{t}\vert_0-\vert\mathsf{t}\vert_1\vert\mathsf{r}_\mathsf{T}\vert_0-\vert\mathsf{r}_\mathsf{B}\vert_0\vert\mathsf{l}_\mathsf{B}\vert_1
.\notag\end{equation}
In the case when $exc(\mathsf{l}_\mathsf{T},\mathsf{t},\mathsf{r}_\mathsf{T};\mathsf{r}_\mathsf{B},\mathsf{b},\mathsf{l}_\mathsf{B})=k$, an oriented HFPL in $\overrightarrow{H}_{\mathsf{l}_\mathsf{T},\mathsf{t},\mathsf{r}_\mathsf{T}}^{\mathsf{r}_\mathsf{B},\mathsf{b},\mathsf{l}_\mathsf{B}}$ 
is said to have excess $k$.
\end{Def}

In this section, oriented HFPLs of excess 0 and 1 are studied. Throughout this section, $\vcenter{\hbox{\includegraphics[width=.014\textwidth]{Expose5}}}$, 
$\vcenter{\hbox{\includegraphics[width=.014\textwidth]{Expose6}}}$, etc.
denote the numbers of occurrences of local configurations of type $\vcenter{\hbox{\includegraphics[width=.014\textwidth]{Expose5}}}$, $\vcenter{\hbox{\includegraphics[width=.014\textwidth]{Expose6}}}$, etc.

\subsection{Hexagonal Knutson-Tao puzzles}\label{Subsec:Hexagonal_Puzzles}
In this subsection, hexagonal Knutson-Tao puzzles are defined. They can be enumerated by Littlewood-Richardson coefficients, what was observed
by A. Knutson (personal conversation). Furthermore, in the next subsection it will be shown that they are in bijection with both ordinary and oriented HFPLs of excess $0$.

\begin{Def}[\cite{KnutsonTao}]
 A \textit{puzzle piece} is defined as one of the following equilateral plane figures with side length $1$ and labelled edges:
 \begin{center}
  \includegraphics[width=.8\textwidth]{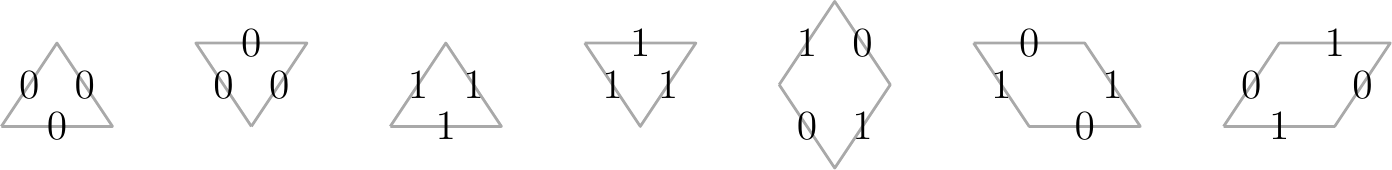} 
	    \end{center}
\end{Def}	    

In the following, the hexagon with vertices $(0,0)$, \mbox{$(\frac{K}{2},\frac{K\sqrt{3}}{2})$}, 
\mbox{$(\frac{K}{2}+L,\frac{K\sqrt{3}}{2})$},\mbox{$(\frac{K+M}{2}+L,\frac{(K-M)\sqrt{3}}{2})$}, \mbox{$(\frac{K+M-N}{2}+L,\frac{(K-M-N)\sqrt{3}}{2})$}  and \mbox{$(\frac{M+N-K}{2},-\frac{(M+N-K)\sqrt{3}}{2})$}
is denoted by $\mathcal{H}_{K,L}^{M,N}$.
A decomposition $P$ of $\mathcal{H}_{K,L}^{M,N}$
into unit triangles and unit rhombi, all edges labelled 0 or 1, such that each region is a puzzle piece is said to be a \textit{hexagonal Knutson-Tao puzzle} of size $(K,L,M,N)$.
Furthermore, a hexagonal Knutson-Tao puzzle is said to have boundary $(\mathsf{l}_\mathsf{T},\mathsf{t},\mathsf{r}_\mathsf{T};\mathsf{r}_\mathsf{B},\mathsf{b},\mathsf{l}_\mathsf{B})$ if the labels of the top left, 
top, top right, bottom right, bottom and bottom left sides of $\mathcal{H}_{K,L}^{M,N}$ are given by $\mathsf{l}_\mathsf{T}$, $\mathsf{t}$, $\mathsf{r}_\mathsf{T}$, $\mathsf{r}_\mathsf{B}$, $\mathsf{b}$ and $\mathsf{l}_\mathsf{B}$ respectively, when read 
from left to right. In Figure~\ref{Fig:Hexagonal_Puzzle}, a hexagonal Knutson-Tao puzzle with boundary $(011,1,1011;110,1,1110)$ is depicted.

\begin{figure}[tbh]
\includegraphics[width=.3\textwidth]{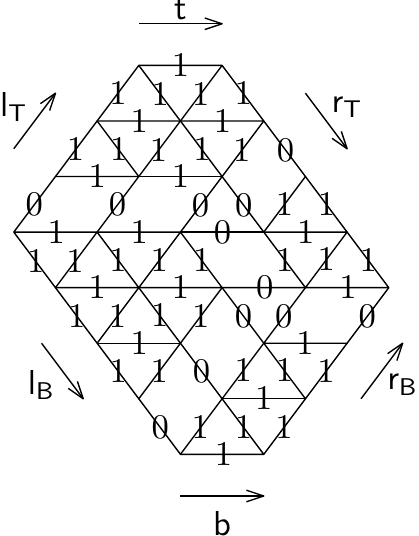}
\caption{A hexagonal Knutson-Tao puzzle with boundary $(011,1,1011;110,1,1110)$.}
\label{Fig:Hexagonal_Puzzle}
\end{figure}

Below, the word of length $n$ consisting solely of ones (resp. zeroes) is denoted by $\textbf{1}_n$ (resp. $\textbf{0}_n$).

\begin{Prop}\label{Prop:Enumeration_Hexagonal_Puzzles}
The number of hexagonal Knutson-Tao puzzles with boundary 
$(\mathsf{l}_\mathsf{T},\mathsf{t},\mathsf{r}_\mathsf{T};\mathsf{r}_\mathsf{B},\mathsf{b},\mathsf{l}_\mathsf{B})$ is given by the Littlewood-Richardson coefficient 
\begin{equation}
c_{\lambda(\textbf{0}_{\vert\mathsf{l}_\mathsf{B}\vert_0}\,\textbf{1}_{\vert\mathsf{l}_\mathsf{B}\vert_1}\,\mathsf{l}_\mathsf{T}\,\mathsf{t}),
\lambda(\textbf{0}_{\vert\mathsf{t}\vert_0}\,\textbf{1}_{\vert\mathsf{t}\vert_1}\,
\mathsf{r}_\mathsf{T}\,\textbf{0}_{\vert\mathsf{r}_\mathsf{B}\vert_0}\,\textbf{1}_{\vert\mathsf{r}_\mathsf{B}\vert_1})}^{\lambda(\mathsf{l}_\mathsf{B}\,\mathsf{b}\,\mathsf{r}_\mathsf{B})}.
\label{Eq:Enumeration_Hexagonal_Puzzles}\end{equation}
\end{Prop}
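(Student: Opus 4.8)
The plan is to realise the hexagon $\mathcal{H}_{K,L}^{M,N}$ as an equilateral triangle with three corners removed and then to reduce the count to the classical enumeration of triangular Knutson--Tao puzzles by Littlewood--Richardson coefficients \cite{KnutsonTao}. A direct comparison of the six side lengths shows that $\mathcal{H}_{K,L}^{M,N}$ is exactly an equilateral triangle $\mathcal{T}$ of side length $L+M+N$ from which three corner triangles have been cut off: a top corner of side $L$, whose slanted cut is the side $\mathsf{t}$; a bottom-left corner of side $M+N-K$, whose cut is $\mathsf{l}_\mathsf{B}$; and a bottom-right corner of side $N$, whose cut is $\mathsf{r}_\mathsf{B}$. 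The three \emph{surviving} sides of $\mathcal{H}_{K,L}^{M,N}$ are then $\mathsf{l}_\mathsf{T}$, $\mathsf{r}_\mathsf{T}$ and $\mathsf{b}$, sitting inside the NW, NE and bottom sides of $\mathcal{T}$ respectively. In particular all three sides of $\mathcal{T}$ have length $L+M+N$, which matches the common length of the three words occurring in (\ref{Eq:Enumeration_Hexagonal_Puzzles}).

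First I would glue each cut side of a hexagonal puzzle $P$ back to a puzzle filling of the corresponding corner triangle, thereby extending $P$ to a decomposition of all of $\mathcal{T}$ into puzzle pieces. The crucial lemma is that each of these three corner fillings is \emph{rigid}: given the word along its cut side, there is exactly one way to complete the corner, and along its two outer sides this unique filling reads off the original cut word on one side and its sorted form $\textbf{0}_p\,\textbf{1}_q$ on the other. Concretely, the top corner leaves $\mathsf{t}$ on the NW side and produces the block $\textbf{0}_{\vert\mathsf{t}\vert_0}\textbf{1}_{\vert\mathsf{t}\vert_1}$ on the NE side, the bottom-left corner produces $\textbf{0}_{\vert\mathsf{l}_\mathsf{B}\vert_0}\textbf{1}_{\vert\mathsf{l}_\mathsf{B}\vert_1}$ on the NW side while leaving $\mathsf{l}_\mathsf{B}$ on the bottom, and the bottom-right corner produces $\textbf{0}_{\vert\mathsf{r}_\mathsf{B}\vert_0}\textbf{1}_{\vert\mathsf{r}_\mathsf{B}\vert_1}$ on the NE side while leaving $\mathsf{r}_\mathsf{B}$ on the bottom. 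I would prove rigidity by the standard propagation argument for puzzles: a side carrying a sorted (rectangular) word admits a unique continuation, and the $0$- and $1$-labelled unit triangles together with the rhombus pieces then force the entire corner. This step is reversible, since in any triangular puzzle with the boundary prescribed below the three corners are automatically these rigid fillings, so truncating them returns a hexagonal puzzle.

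Reading the three sides of $\mathcal{T}$ as concatenations of a corner block, a surviving hexagon side and a further corner block then yields precisely the words $\textbf{0}_{\vert\mathsf{l}_\mathsf{B}\vert_0}\textbf{1}_{\vert\mathsf{l}_\mathsf{B}\vert_1}\,\mathsf{l}_\mathsf{T}\,\mathsf{t}$ on the NW side, $\textbf{0}_{\vert\mathsf{t}\vert_0}\textbf{1}_{\vert\mathsf{t}\vert_1}\,\mathsf{r}_\mathsf{T}\,\textbf{0}_{\vert\mathsf{r}_\mathsf{B}\vert_0}\textbf{1}_{\vert\mathsf{r}_\mathsf{B}\vert_1}$ on the NE side, and $\mathsf{l}_\mathsf{B}\,\mathsf{b}\,\mathsf{r}_\mathsf{B}$ on the bottom, which are exactly the three words entering (\ref{Eq:Enumeration_Hexagonal_Puzzles}). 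Consequently the gluing is a bijection between hexagonal Knutson--Tao puzzles with boundary $(\mathsf{l}_\mathsf{T},\mathsf{t},\mathsf{r}_\mathsf{T};\mathsf{r}_\mathsf{B},\mathsf{b},\mathsf{l}_\mathsf{B})$ and ordinary triangular Knutson--Tao puzzles with these boundary words, and the latter are counted by the Littlewood--Richardson coefficient in (\ref{Eq:Enumeration_Hexagonal_Puzzles}) by the theorem of Knutson and Tao \cite{KnutsonTao}.

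The hard part will be the rigidity lemma for the three corners together with the careful matching of orientation conventions. One has to pin down, for each side of $\mathcal{T}$, the direction in which it is read and whether the roles of $0$ and $1$ are interchanged, so that each corner block comes out sorted as $\textbf{0}_p\textbf{1}_q$ rather than $\textbf{1}_q\textbf{0}_p$ and lands in the correct position of the boundary word; reconciling these conventions with the labellings of the puzzle pieces is where essentially all of the bookkeeping lies. Everything else is a routine length count and an appeal to the classical puzzle enumeration.
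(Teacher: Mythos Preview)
Your proposal is correct and follows essentially the same route as the paper: embed the hexagon in an equilateral triangle by gluing back three corner triangles, use uniqueness of these corner fillings to obtain a bijection with triangular Knutson--Tao puzzles of the indicated boundary, and then invoke \cite{KnutsonTao}. The only difference is that the paper simply cites \cite{KnutsonTao} for the uniqueness of the three corner puzzles (they are precisely the unique puzzles with boundaries $(\mathsf{t},\textbf{0}_{\vert\mathsf{t}\vert_0}\textbf{1}_{\vert\mathsf{t}\vert_1};\mathsf{t})$, $(\mathsf{r}_\mathsf{B},\textbf{0}_{\vert\mathsf{r}_\mathsf{B}\vert_0}\textbf{1}_{\vert\mathsf{r}_\mathsf{B}\vert_1};\mathsf{r}_\mathsf{B})$ and $(\textbf{0}_{\vert\mathsf{l}_\mathsf{B}\vert_0}\textbf{1}_{\vert\mathsf{l}_\mathsf{B}\vert_1},\mathsf{l}_\mathsf{B};\mathsf{l}_\mathsf{B})$), whereas you propose to prove this rigidity directly by propagation; either is fine.
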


\begin{proof}
Triangular Knutson-Tao puzzles with boundary
$(\textbf{0}_{\vert\mathsf{l}_\mathsf{B}\vert_0}\,\textbf{1}_{\vert\mathsf{l}_\mathsf{B}\vert_1}\,\mathsf{l}_\mathsf{T}\,\mathsf{t},\textbf{0}_{\vert\mathsf{t}\vert_0}\,\textbf{1}_{\vert\mathsf{t}\vert_1}\,
\mathsf{r}_\mathsf{T}\,\textbf{0}_{\vert\mathsf{r}_\mathsf{B}\vert_0}\,\textbf{1}_{\vert\mathsf{r}_\mathsf{B}\vert_1};
\mathsf{l}_\mathsf{B}\,\mathsf{b}\,\mathsf{r}_\mathsf{B})$ are enumerated by the Littlewood-Richardson coefficient in (\ref{Eq:Enumeration_Hexagonal_Puzzles}) as it is shown in \cite{KnutsonTao}.
Now, a triangular Knutson-Tao puzzle $P$ with boundary 
$(\textbf{0}_{\vert\mathsf{l}_\mathsf{B}\vert_0}\,\textbf{1}_{\vert\mathsf{l}_\mathsf{B}\vert_1}\,\mathsf{l}_\mathsf{T}\,\mathsf{t},\textbf{0}_{\vert\mathsf{t}\vert_0}\,\textbf{1}_{\vert\mathsf{t}\vert_1}\,
\mathsf{r}_\mathsf{T}\,\textbf{0}_{\vert\mathsf{r}_\mathsf{B}\vert_0}\,\textbf{1}_{\vert\mathsf{r}_\mathsf{B}\vert_1};
\mathsf{l}_\mathsf{B}\,\mathsf{b}\,\mathsf{r}_\mathsf{B})$ decomposes into a hexagonal Knutson-Tao puzzle $P'$ of size $(K,L;M,N)$ with boundary 
$(\mathsf{l}_\mathsf{T},\mathsf{t},\mathsf{r}_\mathsf{T};\mathsf{r}_\mathsf{B},\mathsf{b},\mathsf{l}_\mathsf{B})$, the unique triangular 
Knutson-Tao puzzle with boundary $(\mathsf{t},\textbf{0}_{\vert\mathsf{t}\vert_0}
\textbf{1}_{\vert\mathsf{t}\vert_1};\mathsf{t})$, which is attached to $P'$ alongside the top side of $\mathcal{H}_{K,L}^{M,N}$, 
the unique triangular Knutson-Tao puzzle with boundary 
$(\mathsf{r}_\mathsf{B},\textbf{0}_{\vert\mathsf{r}_\mathsf{B}\vert_0}\,\textbf{1}_{\vert\mathsf{r}_\mathsf{B}\vert_1};
\mathsf{r}_\mathsf{B})$, which is attached to $P'$ alongside the bottom right side of $\mathcal{H}_{K,L}^{M,N}$ and the unique triangular Knutson-Tao puzzle with boundary 
$(\textbf{0}_{\vert\mathsf{l}_\mathsf{B}\vert_0}\,\textbf{1}_{\vert\mathsf{l}_\mathsf{B}\vert_1},\mathsf{l}_\mathsf{B};\mathsf{l}_\mathsf{B})$, which is attached to $P'$
alongside the bottom left side of $\mathcal{H}_{K,L}^{M,N}$. The decomposition for a particular case can be seen in Figure~\ref{Fig:Enumeration_Hexagonal_Puzzles}. 
That the three triangular Knutson-Tao puzzles arising in the decomposition are unique is proven in \cite{KnutsonTao}.
So, by mapping $P$ to $P'$ 
a bijection between the set of triangular Knutson-Tao puzzles with boundary
$(\textbf{0}_{\vert\mathsf{l}_\mathsf{B}\vert_0}\,\textbf{1}_{\vert\mathsf{l}_\mathsf{B}\vert_1}\,\mathsf{l}_\mathsf{T}\,\mathsf{t},\textbf{0}_{\vert\mathsf{t}\vert_0}\,\textbf{1}_{\vert\mathsf{t}\vert_1}\,
\mathsf{r}_\mathsf{T}\,\textbf{0}_{\vert\mathsf{r}_\mathsf{B}\vert_0}\,\textbf{1}_{\vert\mathsf{r}_\mathsf{B}\vert_1};
\mathsf{l}_\mathsf{B}\,\mathsf{b}\,\mathsf{r}_\mathsf{B})$ and hexagonal Knutson-Tao puzzles with boundary 
$(\mathsf{l}_\mathsf{T},\mathsf{t},\mathsf{r}_\mathsf{T};\mathsf{r}_\mathsf{B},\mathsf{b},\mathsf{l}_\mathsf{B})$ is obtained.
\end{proof}

In Figure~\ref{Fig:Enumeration_Hexagonal_Puzzles}, the triangular Knutson-Tao puzzle, that corresponds to the hexagonal Knutson-Tao puzzle depicted in Figure~\ref{Fig:Hexagonal_Puzzle}, is pictured. 

\begin{figure}[tbh]
\includegraphics[width=.5\textwidth]{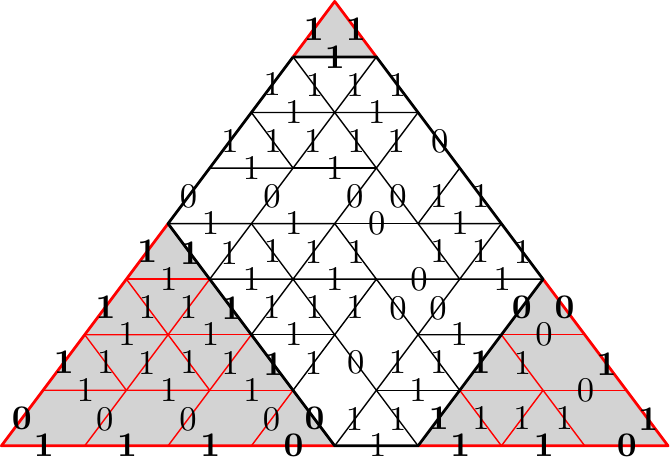}
\caption{The triangular Knutson-Tao puzzle corresponding to the hexagonal one depicted in Figure~\ref{Fig:Hexagonal_Puzzle}.}
\label{Fig:Enumeration_Hexagonal_Puzzles}
\end{figure}

\subsection{Configurations of excess 0}

In this subsection, oriented and ordinary HFPLs of excess $0$ are regarded. 
By Theorem~\ref{Thm:HFPL_Comb_Int}, oriented HFPLs of excess 0 can be characterized as follows:

\begin{Cor}\label{HFPL_Excess_0_Characterization} An oriented HFPL $f$ is of excess $0$ if and only if none 
of the following four configurations occurs in $f$:
\hspace{2cm}$\vcenter{\hbox{\includegraphics[width=.012\textwidth]{Expose5}}}$\hspace{1cm}$\vcenter{\hbox{\includegraphics[width=.012\textwidth]{Expose6}}}$\hspace{1cm}  
$\vcenter{\hbox{\includegraphics[width=.1\textwidth]{Expose7}}}$\hspace{1cm}$\vcenter{\hbox{\includegraphics[width=.1\textwidth]{Expose8}}}$.
\end{Cor}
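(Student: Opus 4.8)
The plan is to treat Theorem~\ref{Thm:HFPL_Comb_Int} as the engine. That theorem rewrites the excess of an oriented HFPL $f$ as the right-hand side of~(\ref{Eq:HFPL_Comb_Int}), which is a sum of the numbers of occurrences in $f$ of eight fixed local configurations. Each summand is, by definition, a count of occurrences of a local pattern, hence a nonnegative integer. Consequently $exc(\mathsf{l}_\mathsf{T},\mathsf{t},\mathsf{r}_\mathsf{T};\mathsf{r}_\mathsf{B},\mathsf{b},\mathsf{l}_\mathsf{B})\geq 0$ (which is precisely Theorem~\ref{Thm:NecCondHFPL}(3)), and the excess vanishes if and only if every one of the eight occurrence-counts vanishes, i.e.\ if and only if none of the eight local configurations on the right-hand side of~(\ref{Eq:HFPL_Comb_Int}) occurs in $f$. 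This dichotomy is the whole conceptual content of the corollary.

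The forward implication is then immediate: if $f$ has excess $0$, all eight counts are zero, so in particular the four configurations named in the statement do not occur. For the reverse implication I first record that it suffices to reduce the eight obstructions to the four listed ones; that is, to show that the absence of the four named configurations already forces the absence of the remaining four configurations appearing in~(\ref{Eq:HFPL_Comb_Int}). I would establish this by a finite inspection of the four unnamed configurations: around the distinguished vertex of each of them, the degree-$2$ condition together with the in-/out-degree constraints in the definition of an oriented HFPL forces an incident copy of one of the four named configurations, so that no unnamed configuration can appear without one of the named ones appearing. Granting this, the hypothesis that none of the four listed configurations occurs makes all eight counts vanish, whence $exc=0$ by the identity of Theorem~\ref{Thm:HFPL_Comb_Int}.

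The only real content beyond Theorem~\ref{Thm:HFPL_Comb_Int} is therefore this last reduction from eight configurations to four, and that is where I expect the work to lie. It is a purely local, case-by-case verification carried out directly on the pictures of the eight configurations, comparing the edge-and-orientation data of each of the four unnamed configurations with the four named ones; no global or enumerative input is needed, so the only difficulty is in checking the finitely many local cases correctly rather than in any conceptual step. Accordingly I would present the proof very briefly, citing Theorem~\ref{Thm:HFPL_Comb_Int} for the nonnegative representation of the excess and relegating the eight-to-four reduction to a short remark on the local structure of oriented HFPLs.
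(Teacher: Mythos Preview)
Your proposal is correct and coincides with the paper's approach: the paper simply states that the corollary follows from Theorem~\ref{Thm:HFPL_Comb_Int} without further argument, and your write-up supplies exactly the natural missing step, namely that the eight nonnegative occurrence counts in~(\ref{Eq:HFPL_Comb_Int}) all vanish precisely when the four listed configurations are absent. Your reduction from eight to four is the right one; in fact the last four configurations each contain one of the first two (an oriented vertical edge) as a sub-picture, so their absence is forced already by the absence of \includegraphics[width=.012\textwidth]{Expose5} and \includegraphics[width=.012\textwidth]{Expose6}, which makes the local case check even shorter than you anticipate.
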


The characterization above and Theorem~\ref{Thm:NecCondHFPL}(3) imply the following properties of an oriented HFPL of excess $0$.

\begin{Prop}\label{Prop:Properties_HFPL_excess_0}
\begin{enumerate}
 \item An oriented HFPL of excess $0$ contains neither a path joining two vertices in $\mathcal{B}$, that is oriented from right to left, nor a path joining two vertices in $\mathcal{T}$, that is oriented 
 from right to left.
 \item The weight of an oriented HFPL of excess $0$ is 1.
 \item An oriented HFPL of excess $0$ does not contain closed paths.
\end{enumerate}
In particular, $\overrightarrow{h}_{\mathsf{l}_\mathsf{T},\mathsf{t},\mathsf{r}_\mathsf{T}}^{\mathsf{r}_\mathsf{B},\mathsf{b},\mathsf{l}_\mathsf{B}}(q)=
h_{\mathsf{l}_\mathsf{T},\mathsf{t},\mathsf{r}_\mathsf{T}}^{\mathsf{r}_\mathsf{B},\mathsf{b},\mathsf{l}_\mathsf{B}}$ if 
$exc(\mathsf{l}_\mathsf{T},\mathsf{t},\mathsf{r}_\mathsf{T};\mathsf{r}_\mathsf{B},\mathsf{b},\mathsf{l}_\mathsf{B})=0$.
\end{Prop}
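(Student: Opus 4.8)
The plan is to pass everything through Proposition~\ref{Prop:InterpretationTurnsHFPLs}. For an oriented HFPL $f$ with the fixed boundary $(\mathsf{l}_\mathsf{T},\mathsf{t},\mathsf{r}_\mathsf{T};\mathsf{r}_\mathsf{B},\mathsf{b},\mathsf{l}_\mathsf{B})$ it gives $t_{\circlearrowleft}(\overline{f})-t_{\circlearrowright}(\overline{f})=RL_b(f)-RL_t(f)+N^{\circlearrowleft}(f)-N^{\circlearrowright}(f)$. Unwinding the definition of $RL_b(f)=RL(\overrightarrow{\pi_b})$, a pair contributing to it is an arch $\{i,j\}$, $i<j$, of $\pi_b$ whose larger endpoint is a source; since the source-sink-word of $\overrightarrow{\pi_b}$ is $\mathsf{b}$, this says exactly that the path joining $B_i$ and $B_j$ leaves $B_j$ and enters $B_i$, i.e.\ is oriented from right to left. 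The same reading of $RL_t(f)$ (whose directed link pattern has source-sink-word $\overline{\mathsf{t}}$) counts the paths joining two vertices of $\mathcal{T}$ that are oriented from right to left. Consequently part~(1) is precisely the claim $RL_b(f)=RL_t(f)=0$, part~(3) is precisely $N^{\circlearrowleft}(f)=N^{\circlearrowright}(f)=0$, and part~(2) will follow at once: the exponent above is then $0$, so the weight is $q^{0}=1$. Thus the substance lies in parts~(1) and~(3).

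For those two parts I would use the local characterization of excess~$0$ in Corollary~\ref{HFPL_Excess_0_Characterization}, namely that $f$ has excess~$0$ if and only if none of the four listed configurations occurs in $f$, and show that each global feature to be excluded forces one of these four configurations. For part~(1), take a path $P$ joining $B_j$ to $B_i$ with $i<j$ and oriented from right to left. Following the forced orientations at the degree-$2$ vertices along $P$, the path leaves $B_j$ but must come back down to the bottom boundary strictly to the left, so somewhere it must perform the turn realising one of the forbidden local orientations; this contradicts Corollary~\ref{HFPL_Excess_0_Characterization}. The statement for two vertices of $\mathcal{T}$ is the mirror image, and can also be deduced from the horizontal-reflection symmetry of oriented HFPLs. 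For part~(3), a closed path is a consistently oriented closed curve on the square lattice, hence turns in all directions; I would argue that such a curve cannot avoid all four forbidden configurations, so no closed path can occur. The main obstacle is exactly this local bookkeeping: checking, against the orientation rules of oriented HFPLs, that the unavoidable turn of a right-to-left boundary path, respectively of a closed loop, is always one of the four configurations of Corollary~\ref{HFPL_Excess_0_Characterization}.

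Finally I would assemble the ``in particular'' statement. By part~(1) every oriented HFPL of this boundary satisfies $RL_b(f)=RL_t(f)=0$, so $\overrightarrow{H}=\overline{H}$ for such a boundary; by part~(2) each such $f$ carries weight $q^{0}=1$, whence $\overrightarrow{h}(q)=|\overrightarrow{H}|$ is constant in $q$ and equals $|\overline{H}|=\overline{h}(\rho)$. Lemma~\ref{Lemma:HFPL_Weighted_Enumeration_Without_RL} identifies $\overline{h}(\rho)$ with $h$, giving $\overrightarrow{h}(q)=h$. This closing step is routine once parts~(1)--(3) are established; the only genuine work is the configuration analysis of the previous paragraph.
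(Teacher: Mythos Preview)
Your approach is essentially the same as the paper's. The paper does not give a detailed argument here either: it simply asserts that the proposition follows from the local characterization in Corollary~\ref{HFPL_Excess_0_Characterization} (together with Theorem~\ref{Thm:NecCondHFPL}(3)) and points to the analogous result for TFPLs. Your sketch uses exactly this characterization for parts~(1) and~(3), derives~(2) from Proposition~\ref{Prop:InterpretationTurnsHFPLs}, and then cleanly assembles the ``in particular'' via the identification $\overrightarrow{H}=\overline{H}$ and Lemma~\ref{Lemma:HFPL_Weighted_Enumeration_Without_RL}; this last step is in fact spelled out more carefully in your write-up than in the paper, and it is correct as stated. The only place that needs more care is the one you flag yourself: the local bookkeeping showing that a right-to-left boundary path or a closed loop is forced to contain one of the forbidden configurations of Corollary~\ref{HFPL_Excess_0_Characterization}. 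In particular, do not forget the degenerate case of a path $B_j\to B_{j-1}$ lying entirely in the bottom row (no vertical step at all), where the contradiction must come from the horizontal forbidden patterns rather than from a down-step.
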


The previous proposition generalizes Proposition~5.3 in \cite{TFPL} and also Lemma~13 in \cite{Nadeau2}. In \cite{ZinnJustin}, \cite{Nadeau2} and in \cite{TFPL}, it is shown that odinary respectively oriented TFPLs of 
excess $0$ are in bijection with (triangular) Knutson-Tao puzzles.
The bijection in \cite{TFPL} between oriented TFPLs of excess $0$ and triangular Knutson-Tao puzzles naturally extends to a bijection between
oriented HFPLs of excess 0 and hexagonal Knutson-Tao puzzles. 
In Figure~\ref{Fig:HFPL_Excess_0}, the oriented HFPL of excess $0$ corresponding to the hexagonal Knutson-Tao puzzle depicted in Figure~\ref{Fig:Hexagonal_Puzzle} is given. 

\begin{figure}[tbh]
\includegraphics[width=.45\textwidth]{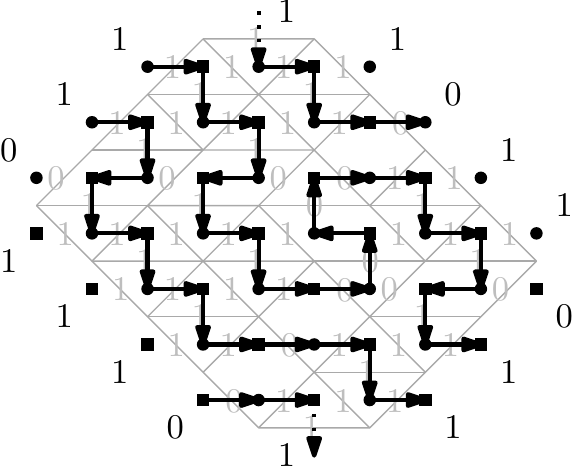}
\caption{The oriented HFPL of excess 0 with boundary $(011,1,1011;110,1,1110)$ that corresponds to the hexagonal Knutson-Tao puzzle with boundary $(011,1,1011;110,1,1110)$ depicted in Figure~\ref{Fig:Hexagonal_Puzzle}.}
\label{Fig:HFPL_Excess_0}
\end{figure}

\begin{Thm}\label{Thm:Bij_HFPL_Exc_0_KT}
Let $(\mathsf{l}_\mathsf{T},\mathsf{t},\mathsf{r}_\mathsf{T};\mathsf{r}_\mathsf{B},\mathsf{b},\mathsf{l}_\mathsf{B})$ be a sextuple of words of length $(K,L,M;N,K+L-N,M+N-K)$ respectively such that 
$exc(\mathsf{l}_\mathsf{T},\mathsf{t},\mathsf{r}_\mathsf{T};\mathsf{r}_\mathsf{B},\mathsf{b},\mathsf{l}_\mathsf{B})=0$. 
Then,
\begin{equation}\overrightarrow{h}_{\mathsf{l}_\mathsf{T},\mathsf{t},\mathsf{r}_\mathsf{T}}^{\mathsf{r}_\mathsf{B},\mathsf{b},\mathsf{l}_\mathsf{B}}=
c_{\lambda(\textbf{0}_{\vert\mathsf{l}_\mathsf{B}\vert_0}\,\textbf{1}_{\vert\mathsf{l}_\mathsf{B}\vert_1}\,\mathsf{l}_\mathsf{T}\,\mathsf{t}),\lambda(\textbf{0}_{\vert\mathsf{t}\vert_0}\,\textbf{1}_{\vert\mathsf{t}\vert_1}
\,\mathsf{r}_\mathsf{T}\,\textbf{0}_{\vert\mathsf{r}_\mathsf{B}\vert_0}\,\textbf{1}_{\vert\mathsf{r}_\mathsf{B}\vert_1})}^{\lambda(\mathsf{l}_\mathsf{B}\,\mathsf{b}\,\mathsf{r}_\mathsf{B})}
.\end{equation}
\end{Thm}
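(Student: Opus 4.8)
The plan is to exhibit a boundary-preserving bijection between the oriented HFPLs of excess $0$ with boundary $(\mathsf{l}_\mathsf{T},\mathsf{t},\mathsf{r}_\mathsf{T};\mathsf{r}_\mathsf{B},\mathsf{b},\mathsf{l}_\mathsf{B})$ and the hexagonal Knutson-Tao puzzles with the same boundary, and then to invoke Proposition~\ref{Prop:Enumeration_Hexagonal_Puzzles}, which already identifies the number of such puzzles with the claimed Littlewood-Richardson coefficient. Since every oriented HFPL under consideration has excess $0$, Corollary~\ref{HFPL_Excess_0_Characterization} guarantees that none of the four forbidden local configurations occurs, so only the admissible local configurations around each vertex need to be treated. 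By Proposition~\ref{Prop:Properties_HFPL_excess_0}(2) the weight of every such configuration is $1$, so the bijective count of oriented HFPLs agrees with the count of ordinary HFPLs as well.

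First I would set up the geometric dictionary between the graph $H^{K,L,M,N}$ together with its directed external edges (the object $\overline{f}$) and the hexagon $\mathcal{H}_{K,L}^{M,N}$. Overlaying the two, each vertex of $\overline{f}$ lies inside exactly one unit triangle or unit rhombus of a tentative puzzle decomposition, and the directions in which oriented edges leave a vertex correspond to the edges of that small region. I would then list, for every vertex type---the interior degree-$2$ vertices as well as the special boundary vertices in $\mathcal{L}_\mathsf{T},\mathcal{L}_\mathsf{B},\mathcal{R}_\mathsf{T},\mathcal{R}_\mathsf{B},\mathcal{T},\mathcal{B}$---the admissible oriented local configurations and assign to each a puzzle piece with a prescribed $0/1$-labelling of its edges. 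This is the verbatim extension of the local rules used in \cite{TFPL} for oriented TFPLs, now applied on both the upper and the lower triangular halves of the hexagon.

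Next I would check that these local assignments glue into a global bijection. Two things must be verified: consistency and boundary matching. Whenever two vertices of $\overline{f}$ are joined by an edge, the shared edge of the corresponding puzzle pieces must receive the same label; this holds because a single oriented edge of $\overline{f}$ determines one label used by both incident pieces. Boundary matching is read directly off the definition of the boundary of an oriented HFPL against the definition of the boundary of a hexagonal Knutson-Tao puzzle, so that reading the labels along the six sides of $\mathcal{H}_{K,L}^{M,N}$ reproduces $\mathsf{l}_\mathsf{T},\mathsf{t},\mathsf{r}_\mathsf{T},\mathsf{r}_\mathsf{B},\mathsf{b},\mathsf{l}_\mathsf{B}$.

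The hard part will be the bookkeeping at the six special boundary regions and, above all, confirming that the excess-$0$ hypothesis is exactly what makes the local rule well defined: the absence of the four configurations from Corollary~\ref{HFPL_Excess_0_Characterization} is precisely what rules out the non-puzzle local patterns, so that every admissible vertex configuration has a unique puzzle-piece image and, conversely, every puzzle piece pulls back to a unique admissible configuration. Once this local equivalence is established, the map is manifestly invertible, and combining it with Proposition~\ref{Prop:Enumeration_Hexagonal_Puzzles} yields the stated formula.
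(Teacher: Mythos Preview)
Your proposal is correct and follows essentially the same route as the paper: the paper states that the bijection in \cite{TFPL} between oriented TFPLs of excess $0$ and triangular Knutson--Tao puzzles ``naturally extends'' to one between oriented HFPLs of excess $0$ and hexagonal Knutson--Tao puzzles, and then invokes Proposition~\ref{Prop:Enumeration_Hexagonal_Puzzles}. You have spelled out in more detail how that extension is to be carried out locally and how Corollary~\ref{HFPL_Excess_0_Characterization} makes the local rule well defined, but the underlying strategy is identical.
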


By Proposition~\ref{Prop:Properties_HFPL_excess_0}, it holds $\overrightarrow{h}_{\mathsf{l}_\mathsf{T},\mathsf{t},\mathsf{r}_\mathsf{T}}^{\mathsf{r}_\mathsf{B},\mathsf{b},\mathsf{l}_\mathsf{B}}
=h_{\mathsf{l}_\mathsf{T},\mathsf{t},\mathsf{r}_\mathsf{T}}^{\mathsf{r}_\mathsf{B},\mathsf{b},\mathsf{l}_\mathsf{B}}$
if $exc(\mathsf{l}_\mathsf{T},\mathsf{t},\mathsf{r}_\mathsf{T};\mathsf{r}_\mathsf{B},\mathsf{b},\mathsf{l}_\mathsf{B})=0$. Thus, as an immediate consequence of Theorem~\ref{Thm:Bij_HFPL_Exc_0_KT} one obtains:

\begin{Cor}\label{Cor:Enumeration_HFPL_Excess_0} Let $(\mathsf{l}_\mathsf{T}, \mathsf{t}, \mathsf{r}_\mathsf{T}; \mathsf{r}_\mathsf{B}, \mathsf{b},\mathsf{l}_\mathsf{B})$ be as in 
Theorem~\ref{Thm:Bij_HFPL_Exc_0_KT}. Then,
\begin{equation}
 h_{\mathsf{l}_\mathsf{T},\mathsf{t},\mathsf{r}_\mathsf{T}}^{\mathsf{r}_\mathsf{B},\mathsf{b},\mathsf{l}_\mathsf{B}}=
 c_{\lambda(\textbf{0}_{\vert\mathsf{l}_\mathsf{B}\vert_0}\,\textbf{1}_{\vert\mathsf{l}_\mathsf{B}\vert_1}\,\mathsf{l}_\mathsf{T}\,\mathsf{t}),
 \lambda(\textbf{0}_{\vert\mathsf{t}\vert_0}\,\textbf{1}_{\vert\mathsf{t}\vert_1}\,
 \mathsf{r}_\mathsf{T}\,\textbf{0}_{\vert\mathsf{r}_\mathsf{B}\vert_0}\,\textbf{1}_{\vert\mathsf{r}_\mathsf{B}\vert_1})}^{\lambda(\mathsf{l}_\mathsf{B}\,\mathsf{b}\,\mathsf{r}_\mathsf{B})}. 
\end{equation}
\end{Cor}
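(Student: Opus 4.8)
The plan is to derive this corollary by chaining together two results already established above: the enumeration of oriented HFPLs of excess $0$ in Theorem~\ref{Thm:Bij_HFPL_Exc_0_KT}, and the coincidence of the ordinary and oriented counts at excess $0$ recorded in Proposition~\ref{Prop:Properties_HFPL_excess_0}. Since Theorem~\ref{Thm:Bij_HFPL_Exc_0_KT} already identifies the cardinality $\overrightarrow{h}_{\mathsf{l}_\mathsf{T},\mathsf{t},\mathsf{r}_\mathsf{T}}^{\mathsf{r}_\mathsf{B},\mathsf{b},\mathsf{l}_\mathsf{B}}$ with the Littlewood--Richardson coefficient in question, the only additional input needed is the equality $h_{\mathsf{l}_\mathsf{T},\mathsf{t},\mathsf{r}_\mathsf{T}}^{\mathsf{r}_\mathsf{B},\mathsf{b},\mathsf{l}_\mathsf{B}}=\overrightarrow{h}_{\mathsf{l}_\mathsf{T},\mathsf{t},\mathsf{r}_\mathsf{T}}^{\mathsf{r}_\mathsf{B},\mathsf{b},\mathsf{l}_\mathsf{B}}$ for sextuples of excess $0$.

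First I would invoke Proposition~\ref{Prop:Properties_HFPL_excess_0}, whose closing assertion states precisely that $\overrightarrow{h}_{\mathsf{l}_\mathsf{T},\mathsf{t},\mathsf{r}_\mathsf{T}}^{\mathsf{r}_\mathsf{B},\mathsf{b},\mathsf{l}_\mathsf{B}}(q)=h_{\mathsf{l}_\mathsf{T},\mathsf{t},\mathsf{r}_\mathsf{T}}^{\mathsf{r}_\mathsf{B},\mathsf{b},\mathsf{l}_\mathsf{B}}$ when the excess vanishes. Conceptually this is where the real work sits, and it is worth recalling why it holds. The map in \eqref{Eq:HFPLcontainedinOHFPL} always embeds ordinary HFPLs into oriented ones; what must be checked is that at excess $0$ this embedding is onto. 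Parts~(1) and~(3) of Proposition~\ref{Prop:Properties_HFPL_excess_0} guarantee that an oriented HFPL of excess $0$ has no closed paths and no $\mathcal{B}$--$\mathcal{B}$ or $\mathcal{T}$--$\mathcal{T}$ path oriented from right to left, so that its orientation is forced to be exactly the canonical one used to define the injection; part~(2), namely that such configurations have weight $1$, then makes the weighted count $\overrightarrow{h}(q)$ collapse to the plain cardinality $\overrightarrow{h}$ independently of $q$. These facts rest in turn on the local-configuration characterization of excess $0$ in Corollary~\ref{HFPL_Excess_0_Characterization}, itself an immediate consequence of Theorem~\ref{Thm:HFPL_Comb_Int}.

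With $h=\overrightarrow{h}$ in hand, the corollary follows by substituting the value of $\overrightarrow{h}_{\mathsf{l}_\mathsf{T},\mathsf{t},\mathsf{r}_\mathsf{T}}^{\mathsf{r}_\mathsf{B},\mathsf{b},\mathsf{l}_\mathsf{B}}$ supplied by Theorem~\ref{Thm:Bij_HFPL_Exc_0_KT}. No genuine obstacle remains at this stage: the combinatorial content --- the bijection with hexagonal Knutson--Tao puzzles and their enumeration by the Littlewood--Richardson coefficient of \eqref{Eq:Enumeration_Hexagonal_Puzzles} --- has been discharged beforehand. The one point demanding care is purely bookkeeping: one must confirm that the same sextuple $(\mathsf{l}_\mathsf{T},\mathsf{t},\mathsf{r}_\mathsf{T};\mathsf{r}_\mathsf{B},\mathsf{b},\mathsf{l}_\mathsf{B})$ indexes both sides and that the partitions entering the coefficient, namely $\lambda(\textbf{0}_{\vert\mathsf{l}_\mathsf{B}\vert_0}\,\textbf{1}_{\vert\mathsf{l}_\mathsf{B}\vert_1}\,\mathsf{l}_\mathsf{T}\,\mathsf{t})$, $\lambda(\textbf{0}_{\vert\mathsf{t}\vert_0}\,\textbf{1}_{\vert\mathsf{t}\vert_1}\,\mathsf{r}_\mathsf{T}\,\textbf{0}_{\vert\mathsf{r}_\mathsf{B}\vert_0}\,\textbf{1}_{\vert\mathsf{r}_\mathsf{B}\vert_1})$ and $\lambda(\mathsf{l}_\mathsf{B}\,\mathsf{b}\,\mathsf{r}_\mathsf{B})$, are read off in the identical convention in Theorem~\ref{Thm:Bij_HFPL_Exc_0_KT} and in the statement of the corollary.
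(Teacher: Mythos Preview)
Your proposal is correct and follows exactly the route the paper takes: the paper derives the corollary in one line from Proposition~\ref{Prop:Properties_HFPL_excess_0} (which gives $h=\overrightarrow{h}$ at excess~$0$) combined with Theorem~\ref{Thm:Bij_HFPL_Exc_0_KT}. Your additional paragraph unpacking why Proposition~\ref{Prop:Properties_HFPL_excess_0} forces the injection to be onto is accurate supplementary detail, but the logical skeleton is identical to the paper's.
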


\subsection{Configurations of excess 1}

In this subsection, the number of oriented respectively ordinary HFPLs of excess $1$ is expressed in terms of Littlewood-Richardson coefficients. Here, no proofs are given
because they are analogous to the proofs in \cite[Section~6]{TFPL}. The following characterization of oriented HFPLs of excess 1 is an immediate consequence of Theorem~\ref{Thm:HFPL_Comb_Int}.

\begin{Prop}\label{Prop:Characterization_HFPL_excess_1}
An oriented HFPL has excess $1$ if and only if there is one local configuration among the first four in the list below that appears precisely once, whereas the other four configurations in the
list do not appear at all.
\begin{center}
$\vcenter{\hbox{\includegraphics[width=.014\textwidth]{Expose5}}}$ \hspace{.4cm} $\vcenter{\hbox{\includegraphics[width=.014\textwidth]{Expose6}}}$ \hspace{.4cm}
$\vcenter{\hbox{\includegraphics[width=.1\textwidth]{Expose7}}}$ \hspace{.4cm} $\vcenter{\hbox{\includegraphics[width=.1\textwidth]{Expose8}}}$ \hspace{.4cm} $\arrowvert$
\hspace{.4cm} $\vcenter{\hbox{\includegraphics[width=.06\textwidth]{Expose9}}}$ \hspace{.4cm}
$\vcenter{\hbox{\includegraphics[width=.06\textwidth]{Expose10}}}$ \hspace{.4cm} $\vcenter{\hbox{\includegraphics[width=.06\textwidth]{Expose11}}}$ \hspace{.4cm}
		$\vcenter{\hbox{\includegraphics[width=.06\textwidth]{Expose12}}}$
\end{center}
\end{Prop}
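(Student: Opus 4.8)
The plan is to read the statement straight off Theorem~\ref{Thm:HFPL_Comb_Int}. That theorem writes the excess of an oriented HFPL as the sum of the numbers of occurrences of exactly the eight local configurations listed in the present proposition, the four before the separating bar being the first four summands of Theorem~\ref{Thm:HFPL_Comb_Int} and the four after it being the remaining summands. Since each of these eight numbers is a non-negative integer, the excess equals $1$ if and only if one of the eight counts is $1$ and the other seven vanish. Hence, once this is noted, the whole content of the proposition is the assertion that the unique surviving configuration must sit among the first four and not among the last four.

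First I would treat the forward direction. Assuming excess $1$, the preceding remark gives that exactly one of the eight counts equals $1$ and the rest are $0$; it remains to locate the surviving configuration. Here I would invoke Corollary~\ref{HFPL_Excess_0_Characterization}, more precisely the implication that an oriented HFPL in which none of the first four configurations occurs has excess $0$. Contrapositively, an oriented HFPL of excess $1$ must contain at least one copy of one of the first four configurations; combined with the fact that only a single one of the eight counts is nonzero, this forces that surviving count to lie among the first four and to equal $1$, so that the last four configurations (and the other three of the first four) do not occur at all. That is precisely the claimed pattern. The converse I would get immediately from the same formula: if a single one of the eight configurations occurs, occurs exactly once, and is one of the first four, then all eight summands in Theorem~\ref{Thm:HFPL_Comb_Int} vanish except for one equal to $1$, whence the excess is $1$.

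The one substantive ingredient, used in the forward direction, is the implication in Corollary~\ref{HFPL_Excess_0_Characterization} that the last four configurations cannot occur while all of the first four are absent; everything else is bookkeeping with non-negative integers. I expect this to be the main obstacle. I would establish it exactly as the analogous fact is handled in \cite[Section~6]{TFPL}: passing to the hexagonal blue-red path-tangle model of Section~\ref{Subsec:PathTanglesHFPL} and performing a local case analysis of the neighbourhoods in which each of the last four configurations can appear, one checks that every such occurrence is necessarily accompanied by an occurrence of one of the first four. Once this local fact is in place, the characterization of excess $1$ follows purely formally from the additive interpretation of the excess.
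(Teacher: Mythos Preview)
Your proposal is correct and matches the paper's approach: the paper states only that the characterization is ``an immediate consequence of Theorem~\ref{Thm:HFPL_Comb_Int}'' and gives no further argument, so your elaboration---read off the non-negative sum, then rule out the last four via Corollary~\ref{HFPL_Excess_0_Characterization}---is exactly the intended reasoning.

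One remark on the final paragraph: you describe the implication behind Corollary~\ref{HFPL_Excess_0_Characterization} (that none of the last four configurations can occur when all of the first four are absent) as the ``main obstacle'' requiring a local case analysis in the path-tangle model. In the paper this is not treated as an obstacle at all; it is immediate from the pictures, since each of the four configurations after the bar visibly contains one of the four configurations before the bar as a sub-pattern. Thus an occurrence of any one of the last four already contributes at least~$2$ to the right-hand side of Theorem~\ref{Thm:HFPL_Comb_Int}, which simultaneously yields the non-trivial direction of Corollary~\ref{HFPL_Excess_0_Characterization} and forces the unique surviving configuration in the excess-$1$ case to lie among the first four. No passage to the blue-red path-tangle model is needed for this step.
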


By Proposition~\ref{Prop:Characterization_HFPL_excess_1} oriented HFPLs of excess 1 resemble oriented HFPLs of excess 0 but with one ``defect''.
To enumerate oriented HFPLs of excess $1$, the ``defect'' is moved to the boundary of the oriented HFPL of excess $1$ by fixed rules. These rules are the same as in \cite[Section~6]{TFPL} for oriented TFPLs 
of excess $1$. Once on the boundary of the oriented HFPL of excess $1$, the ``defect'' can be deleted and one obtains an oriented HFPL of excess $0$. This is how the enumeration of oriented HFPLs of excess $1$
can be reduced to the enumeration of HFPLs of excess $0$. The resulting expression is stated in Theorem~\ref{Thm:Enumeration_Excess_1_HFPL}(1).

\begin{Def}
Given two words $\omega$ and $\omega^+$ it is written $\omega\longrightarrow\omega^+$ if $\omega=\omega_L01\omega_R$ and $\omega^+=\omega_L10\omega_R$.
Additionally, if $\omega\longrightarrow\omega^+$, then $L_i(\omega,\omega^+)=\vert\omega_L\vert_i$ respectively $R_i(\omega,\omega^+)=\vert\omega_R\vert_i$ for $i=0,1$ and $L(\omega,\omega^+)=L_0(\omega,\omega^+)+L_1(\omega,\omega^+)+1$ respectively
$R(\omega,\omega^+)=R_0(\omega,\omega^+)+R_1(\omega,\omega^+)+1$. 
\end{Def}
Below, it is written $c_{u,v}^w$ instead of $c_{\lambda(u),\lambda(v)}^{\lambda(w)}$ for words $u,v,w$.

\begin{Thm}\label{Thm:Enumeration_Excess_1_HFPL} Let $(\mathsf{l}_\mathsf{T},\mathsf{t},\mathsf{r}_\mathsf{T};\mathsf{r}_\mathsf{B},\mathsf{b},\mathsf{l}_\mathsf{B})$ be a sextuple of words of length 
$(K,L,N,N,K+L-N,M+N-K)$ respectively such that $exc(\mathsf{l}_\mathsf{T},\mathsf{t},\mathsf{r}_\mathsf{T};\mathsf{r}_\mathsf{B},\mathsf{b},\mathsf{l}_\mathsf{B})=1$. 
\begin{enumerate}
\item The number of oriented HFPLs with boundary $(\mathsf{l}_\mathsf{T},\mathsf{t},\mathsf{r}_\mathsf{T};\mathsf{r}_\mathsf{B},\mathsf{b},\mathsf{l}_\mathsf{B})$ is 
\begin{align} & \sum\limits_{\mathsf{l}_\mathsf{T}^+:\mathsf{l}_\mathsf{T}\longrightarrow\mathsf{l}_\mathsf{T}^+}
(\vert\mathsf{l}_\mathsf{T}\vert_1+\vert\mathsf{t}\vert_1+L_1(\mathsf{l}_\mathsf{T},\mathsf{l}_\mathsf{T}^+))
c_{\textbf{0}_{\vert\mathsf{l}_\mathsf{B}\vert_0}\,\textbf{1}_{\vert\mathsf{l}_\mathsf{B}\vert_1}\,\mathsf{l}_\mathsf{T}^+\,\mathsf{t},
\textbf{0}_{\vert\mathsf{t}\vert_0}\,\textbf{1}_{\vert\mathsf{t}\vert_1}\,\mathsf{r}_\mathsf{T}\,\textbf{0}_{\vert\mathsf{r}_\mathsf{B}\vert_0}\,\textbf{1}_{\vert\mathsf{r}_\mathsf{B}\vert_1}}
^{\mathsf{l}_\mathsf{B}\,\mathsf{b}\,\mathsf{r}_\mathsf{B}}\notag\\
& +\sum\limits_{\mathsf{t}^+:\mathsf{t}\longrightarrow\mathsf{t}^+}2(\vert\mathsf{l}_\mathsf{T}\vert_1+L_1(\mathsf{t},\mathsf{t}^+))
c_{\textbf{0}_{\vert\mathsf{l}_\mathsf{B}\vert_0}\,\textbf{1}_{\vert\mathsf{l}_\mathsf{B}\vert_1}\,\mathsf{l}_\mathsf{T}\,\mathsf{t}^+,\textbf{0}_{\vert\mathsf{t}\vert_0}\,\textbf{1}_{\vert\mathsf{t}\vert_1}\,
\mathsf{r}_\mathsf{T}\,\textbf{0}_{\vert\mathsf{r}_\mathsf{B}\vert_0}\,\textbf{1}_{\vert\mathsf{r}_\mathsf{B}\vert_1}}^{\mathsf{l}_\mathsf{B}\,\mathsf{b}\,\mathsf{r}_\mathsf{B}}\notag\\
& +\sum\limits_{\mathsf{r}_\mathsf{T}^+:\mathsf{r}_\mathsf{T}\longrightarrow\mathsf{r}_\mathsf{T}^+}(L+\vert\mathsf{t}\vert_1+L(\mathsf{r}_\mathsf{T},\mathsf{r}_\mathsf{T}^+)+
L_1(\mathsf{r}_\mathsf{T},\mathsf{r}_\mathsf{T}^+)+1)
c_{\textbf{0}_{\vert\mathsf{l}_\mathsf{B}\vert_0}\,\textbf{1}_{\vert\mathsf{l}_\mathsf{B}\vert_1}\,\mathsf{l}_\mathsf{T}\,\mathsf{t},\textbf{0}_{\vert\mathsf{t}\vert_0}\,\textbf{1}_{\vert\mathsf{t}\vert_1}\,
\mathsf{r}_\mathsf{T}^+\,\textbf{0}_{\vert\mathsf{r}_\mathsf{B}\vert_0}\,\textbf{1}_{\vert\mathsf{r}_\mathsf{B}\vert_1}}^{\mathsf{l}_\mathsf{B}\,\mathsf{b}\,\mathsf{r}_\mathsf{B}}\notag\\
& +\sum\limits_{\mathsf{r}_\mathsf{B}^-:\mathsf{r}_\mathsf{B}^-\longrightarrow\mathsf{r}_\mathsf{B}}(L+M+\vert\mathsf{l}_\mathsf{B}\vert_1+1-\vert\mathsf{r}_\mathsf{B}\vert_1-\vert\mathsf{b}\vert_1-
L_1(\mathsf{r}_\mathsf{B}^-,\mathsf{r}_\mathsf{B}))
c_{\textbf{0}_{\vert\mathsf{l}_\mathsf{B}\vert_0}\,\textbf{1}_{\vert\mathsf{l}_\mathsf{B}\vert_1}\,\mathsf{l}_\mathsf{T}\,\mathsf{t},\textbf{0}_{\vert\mathsf{t}\vert_0}\,\textbf{1}_{\vert\mathsf{t}\vert_1}\,
\mathsf{r}_\mathsf{T}\,\textbf{0}_{\vert\mathsf{r}_\mathsf{B}\vert_0}\,\textbf{1}_{\vert\mathsf{r}_\mathsf{B}\vert_1}}^{\mathsf{l}_\mathsf{B}\,\mathsf{b}\,\mathsf{r}_\mathsf{B}^-}\notag\\
& - \sum\limits_{\mathsf{b}^-:\mathsf{b}^-\longrightarrow\mathsf{b}}2L_1(\mathsf{b}^-,\mathsf{b})
c_{\textbf{0}_{\vert\mathsf{l}_\mathsf{B}\vert_0}\,\textbf{1}_{\vert\mathsf{l}_\mathsf{B}\vert_1}\,\mathsf{l}_\mathsf{T}\,\mathsf{t},\textbf{0}_{\vert\mathsf{t}\vert_0}\,\textbf{1}_{\vert\mathsf{t}\vert_1}\,
\mathsf{r}_\mathsf{T}\,\textbf{0}_{\vert\mathsf{r}_\mathsf{B}\vert_0}\,\textbf{1}_{\vert\mathsf{r}_\mathsf{B}\vert_1}}^{\mathsf{l}_\mathsf{B}\,\mathsf{b}^-\,\mathsf{r}_\mathsf{B}}\notag\\
& +\sum\limits_{\mathsf{l}_\mathsf{B}^-:\mathsf{l}_\mathsf{B}^-\longrightarrow\mathsf{l}_\mathsf{B}}(\vert\mathsf{l}_\mathsf{T}\vert_1+\vert\mathsf{t}\vert_1-L(\mathsf{l}_\mathsf{B}^-,\mathsf{l}_\mathsf{B})-
L_1(\mathsf{l}_\mathsf{B}^-,\mathsf{l}_\mathsf{B}))
c_{\textbf{0}_{\vert\mathsf{l}_\mathsf{B}\vert_0}\,\textbf{1}_{\vert\mathsf{l}_\mathsf{B}\vert_1}\,\mathsf{l}_\mathsf{T}\,\mathsf{t},\textbf{0}_{\vert\mathsf{t}\vert_0}\,\textbf{1}_{\vert\mathsf{t}\vert_1}\,
\mathsf{r}_\mathsf{T}\,\textbf{0}_{\vert\mathsf{r}_\mathsf{B}\vert_0}\,\textbf{1}_{\vert\mathsf{r}_\mathsf{B}\vert_1}}^{\mathsf{l}_\mathsf{B}^-\,\mathsf{b}\,\mathsf{r}_\mathsf{B}}.\notag
\end{align}
\item The weighted enumeration of oriented HFPLs with boundary $(\mathsf{l}_\mathsf{T},\mathsf{t},\mathsf{r}_\mathsf{T};\mathsf{r}_\mathsf{B},\mathsf{b},\mathsf{l}_\mathsf{B})$ is 
\begin{align}
 & \sum\limits_{\mathsf{l}_\mathsf{T}^+:\mathsf{l}_\mathsf{T}\longrightarrow\mathsf{l}_\mathsf{T}^+}(R_1(\mathsf{l}_\mathsf{T},\mathsf{l}_\mathsf{T}^+)+\vert\mathsf{t}\vert_1+1+(q+q^{-1})
 L_1(\mathsf{l}_\mathsf{T},\mathsf{l}_\mathsf{T}^+))
 c_{\textbf{0}_{\vert\mathsf{l}_\mathsf{B}\vert_0}\,\textbf{1}_{\vert\mathsf{l}_\mathsf{B}\vert_1}\,\mathsf{l}_\mathsf{T}^+\,\mathsf{t},\textbf{0}_{\vert\mathsf{t}\vert_0}\,\textbf{1}_{\vert\mathsf{t}\vert_1}\,
 \mathsf{r}_\mathsf{T}\,\textbf{0}_{\vert\mathsf{r}_\mathsf{B}\vert_0}\,\textbf{1}_{\vert\mathsf{r}_\mathsf{B}\vert_1}}^{\mathsf{l}_\mathsf{B}\,\mathsf{b}\,\mathsf{r}_\mathsf{B}}\notag\\
& +\sum\limits_{\mathsf{t}^+:\mathsf{t}\longrightarrow\mathsf{t}^+}((q+q^{-1})(\vert\mathsf{l}_\mathsf{T}\vert_1+L_1(\mathsf{t},\mathsf{t}^+))-q)
c_{\textbf{0}_{\vert\mathsf{l}_\mathsf{B}\vert_0}\,\textbf{1}_{\vert\mathsf{l}_\mathsf{B}\vert_1}\,\mathsf{l}_\mathsf{T}\,\mathsf{t}^+,\textbf{0}_{\vert\mathsf{t}\vert_0}\,\textbf{1}_{\vert\mathsf{t}\vert_1}\,
\mathsf{r}_\mathsf{T}\,\textbf{0}_{\vert\mathsf{r}_\mathsf{B}\vert_0}\,\textbf{1}_{\vert\mathsf{r}_\mathsf{B}\vert_1}}^{\mathsf{l}_\mathsf{B}\,\mathsf{b}\,\mathsf{r}_\mathsf{B}}
\notag\\
& +\sum\limits_{\mathsf{r}_\mathsf{T}^+:\mathsf{r}_\mathsf{T}\longrightarrow\mathsf{r}_\mathsf{T}^+}(\vert\mathsf{t}\vert_0+1+L_0(\mathsf{r}_\mathsf{T},\mathsf{r}_\mathsf{T}^+)+(q+q^{-1})(\vert\mathsf{t}\vert_1+
L_1(\mathsf{r}_\mathsf{T},\mathsf{r}_\mathsf{T}^+)))
c_{\textbf{0}_{\vert\mathsf{l}_\mathsf{B}\vert_0}\,\textbf{1}_{\vert\mathsf{l}_\mathsf{B}\vert_1}\,\mathsf{l}_\mathsf{T}\,\mathsf{t},\textbf{0}_{\vert\mathsf{t}\vert_0}\,\textbf{1}_{\vert\mathsf{t}\vert_1}\,
\mathsf{r}_\mathsf{T}^+\,\textbf{0}_{\vert\mathsf{r}_\mathsf{B}\vert_0}\,\textbf{1}_{\vert\mathsf{r}_\mathsf{B}\vert_1}}^{\mathsf{l}_\mathsf{B}\,\mathsf{b}\,\mathsf{r}_\mathsf{B}}\notag\\
& + \sum\limits_{\mathsf{r}_\mathsf{B}^-:\mathsf{r}_\mathsf{B}^-\longrightarrow\mathsf{r}_\mathsf{B}}(\vert\mathsf{t}\vert_0+\vert\mathsf{r}_\mathsf{T}\vert_0-R_1(\mathsf{r}_\mathsf{B}^-,
\mathsf{r}_\mathsf{B})+(q+q^{-1})(\vert\mathsf{l}_\mathsf{B}\vert_1-L_1(\mathsf{r}_\mathsf{B}^-,\mathsf{r}_\mathsf{B})))
c_{\textbf{0}_{\vert\mathsf{l}_\mathsf{B}\vert_0}\,\textbf{1}_{\vert\mathsf{l}_\mathsf{B}\vert_1}\,\mathsf{l}_\mathsf{T}\mathsf{t},\textbf{0}_{\vert\mathsf{t}\vert_0}\,\textbf{1}_{\vert\mathsf{t}\vert_1}\,
\mathsf{r}_\mathsf{T}\,\textbf{0}_{\vert\mathsf{r}_\mathsf{B}\vert_0}\,\textbf{1}_{\vert\mathsf{r}_\mathsf{B}\vert_1}}^{\mathsf{l}_\mathsf{B}\,\mathsf{b}\,\mathsf{r}_\mathsf{B}^-}\notag\\
& - \sum\limits_{\mathsf{b}^-:\mathsf{b}^-\longrightarrow\mathsf{b}}((q+q^{-1})L_1(\mathsf{b}^-,\mathsf{b})+q)
c_{\textbf{0}_{\vert\mathsf{l}_\mathsf{B}\vert_0}\,\textbf{1}_{\vert\mathsf{l}_\mathsf{B}\vert_1}\,\mathsf{l}_\mathsf{T}\,\mathsf{t},\textbf{0}_{\vert\mathsf{t}\vert_0}\,\textbf{1}_{\vert\mathsf{t}\vert_1}\,
\mathsf{r}_\mathsf{T}\,\textbf{0}_{\vert\mathsf{r}_\mathsf{B}\vert_0}\,\textbf{1}_{\vert\mathsf{r}_\mathsf{B}\vert_1}}^{\mathsf{l}_\mathsf{B}\,\mathsf{b}^-\,\mathsf{r}_\mathsf{B}}\notag\\
& +\sum\limits_{\mathsf{l}_\mathsf{B}^-:\mathsf{l}_\mathsf{B}^-\longrightarrow\mathsf{l}_\mathsf{B}}(\vert\mathsf{l}_\mathsf{T}\vert_1+\vert\mathsf{t}\vert_1-L_0(\mathsf{l}_\mathsf{B}^-,\mathsf{l}_\mathsf{B})-
(q+q^{-1})L_1(\mathsf{l}_\mathsf{B}^-,\mathsf{l}_\mathsf{B}))
c_{\textbf{0}_{\vert\mathsf{l}_\mathsf{B}\vert_0}\,\textbf{1}_{\vert\mathsf{l}_\mathsf{B}\vert_1}\,\mathsf{l}_\mathsf{T}\,\mathsf{t},\textbf{0}_{\vert\mathsf{t}\vert_0}\,\textbf{1}_{\vert\mathsf{t}\vert_1}\,
\mathsf{r}_\mathsf{T}\,\textbf{0}_{\vert\mathsf{r}_\mathsf{B}\vert_0}\,\textbf{1}_{\vert\mathsf{r}_\mathsf{B}\vert_1}}^{\mathsf{l}_\mathsf{B}^-\,\mathsf{b}\,\mathsf{r}_\mathsf{B}}.
\notag\end{align}
\item The number of HFPLs with boundary $(\mathsf{l}_\mathsf{T},\mathsf{t},\mathsf{r}_\mathsf{T};\mathsf{r}_\mathsf{B},\mathsf{b},\mathsf{l}_\mathsf{B})$ is 
\begin{align}
& \sum\limits_{\mathsf{l}_\mathsf{T}^+:\mathsf{l}_\mathsf{T}\longrightarrow\mathsf{l}_\mathsf{T}^+}(\vert\mathsf{l}_\mathsf{T}\vert_1+\vert\mathsf{t}\vert_1)
c_{\textbf{0}_{\vert\mathsf{l}_\mathsf{B}\vert_0}\,\textbf{1}_{\vert\mathsf{l}_\mathsf{B}\vert_1}\,\mathsf{l}_\mathsf{T}^+\,\mathsf{t},\textbf{0}_{\vert\mathsf{t}\vert_0}\,\textbf{1}_{\vert\mathsf{t}\vert_1}\,
\mathsf{r}_\mathsf{T}\,\textbf{0}_{\vert\mathsf{r}_\mathsf{B}\vert_0}\,\textbf{1}_{\vert\mathsf{r}_\mathsf{B}\vert_1}}^{\mathsf{l}_\mathsf{B}\,\mathsf{b}\,\mathsf{r}_\mathsf{B}}\notag\\
& +\sum\limits_{\mathsf{t}^+:\mathsf{t}\longrightarrow\mathsf{t}^+}(\vert\mathsf{l}_\mathsf{T}\vert_1+L_1(\mathsf{t},\mathsf{t}^+)-1)
c_{\textbf{0}_{\vert\mathsf{l}_\mathsf{B}\vert_0}\,\textbf{1}_{\vert\mathsf{l}_\mathsf{B}\vert_1}\,\mathsf{l}_\mathsf{T}\,\mathsf{t}^+,\textbf{0}_{\vert\mathsf{t}\vert_0}\,\textbf{1}_{\vert\mathsf{t}\vert_1}\,
\mathsf{r}_\mathsf{T}\,\textbf{0}_{\vert\mathsf{r}_\mathsf{B}\vert_0}\,\textbf{1}_{\vert\mathsf{r}_\mathsf{B}\vert_1}}^{\mathsf{l}_\mathsf{B}\,\mathsf{b}\,\mathsf{r}_\mathsf{B}}\notag\\
& +\sum\limits_{\mathsf{r}_\mathsf{T}^+:\mathsf{r}_\mathsf{T}\longrightarrow\mathsf{r}_\mathsf{T}^+}(L+L(\mathsf{r}_\mathsf{T},\mathsf{r}_\mathsf{T}^+))
c_{\textbf{0}_{\vert\mathsf{l}_\mathsf{B}\vert_0}\,\textbf{1}_{\vert\mathsf{l}_\mathsf{B}\vert_1}\,\mathsf{l}_\mathsf{T}\,\mathsf{t},\textbf{0}_{\vert\mathsf{t}\vert_0}\,\textbf{1}_{\vert\mathsf{t}\vert_1}\,
\mathsf{r}_\mathsf{T}^+\,\textbf{0}_{\vert\mathsf{r}_\mathsf{B}\vert_0}\,\textbf{1}_{\vert\mathsf{r}_\mathsf{B}\vert_1}}^{\mathsf{l}_\mathsf{B}\,\mathsf{b}\,\mathsf{r}_\mathsf{B}}\notag\\
& + \sum\limits_{\mathsf{r}_\mathsf{B}^-:\mathsf{r}_\mathsf{B}^-\longrightarrow\mathsf{r}_\mathsf{B}}(\vert\mathsf{t}\vert_0+\vert\mathsf{r}_\mathsf{T}\vert_0+\vert\mathsf{l}_\mathsf{B}\vert_1-
\vert\mathsf{r}_\mathsf{B}\vert_1+1)
c_{\textbf{0}_{\vert\mathsf{l}_\mathsf{B}\vert_0}\,\textbf{1}_{\vert\mathsf{l}_\mathsf{B}\vert_1}\,\mathsf{l}_\mathsf{T}\,\mathsf{t},\textbf{0}_{\vert\mathsf{t}\vert_0}\,\textbf{1}_{\vert\mathsf{t}\vert_1}\,
\mathsf{r}_\mathsf{T}\,\textbf{0}_{\vert\mathsf{r}_\mathsf{B}\vert_0}\,\textbf{1}_{\vert\mathsf{r}_\mathsf{B}\vert_1}}^{\mathsf{l}_\mathsf{B}\,\mathsf{b}\,\mathsf{r}_\mathsf{B}^-}\notag\\
& - \sum\limits_{\mathsf{b}^-:\mathsf{b}^-\longrightarrow\mathsf{b}}L_1(\mathsf{b}^-,\mathsf{b})
c_{\textbf{0}_{\vert\mathsf{l}_\mathsf{B}\vert_0}\,\textbf{1}_{\vert\mathsf{l}_\mathsf{B}\vert_1}\,\mathsf{l}_\mathsf{T}\,\mathsf{t},\textbf{0}_{\vert\mathsf{t}\vert_0}\,\textbf{1}_{\vert\mathsf{t}\vert_1}\,
\mathsf{r}_\mathsf{T}\,\textbf{0}_{\vert\mathsf{r}_\mathsf{B}\vert_0}\,\textbf{1}_{\vert\mathsf{r}_\mathsf{B}\vert_1}}^{\mathsf{l}_\mathsf{B}\,\mathsf{b}^-\,\mathsf{r}_\mathsf{B}}\notag\\
& +\sum\limits_{\mathsf{l}_\mathsf{B}^-:\mathsf{l}_\mathsf{B}^-\longrightarrow\mathsf{l}_\mathsf{B}}(\vert\mathsf{l}_\mathsf{T}\vert_1+\vert\mathsf{t}\vert_1-L(\mathsf{l}_\mathsf{B}^-,\mathsf{l}_\mathsf{B})+1)
c_{\textbf{0}_{\vert\mathsf{l}_\mathsf{B}\vert_0}\,\textbf{1}_{\vert\mathsf{l}_\mathsf{B}\vert_1}\,\mathsf{l}_\mathsf{T}\,\mathsf{t},\textbf{0}_{\vert\mathsf{t}\vert_0}\,\textbf{1}_{\vert\mathsf{t}\vert_1}\,
\mathsf{r}_\mathsf{T}\,\textbf{0}_{\vert\mathsf{r}_\mathsf{B}\vert_0}\,\textbf{1}_{\vert\mathsf{r}_\mathsf{B}\vert_1}}^{\mathsf{l}_\mathsf{B}^-\,\mathsf{b}\,\mathsf{r}_\mathsf{B}}.
\notag\end{align}

\end{enumerate}
\end{Thm}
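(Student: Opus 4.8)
The plan is to prove all three formulas by the ``defect-transport'' reduction used for oriented TFPLs in \cite[Section~6]{TFPL}, adapted to the hexagonal geometry. By Proposition~\ref{Prop:Characterization_HFPL_excess_1}, an oriented HFPL of excess $1$ is distinguished by the presence of exactly one of the first four local configurations listed there, occurring a single time, together with the complete absence of the other four. I would regard this distinguished occurrence as a lone \emph{defect} embedded in an otherwise excess-$0$ configuration, the latter being free of all four configurations by Corollary~\ref{HFPL_Excess_0_Characterization}. Since the enumeration of excess-$0$ oriented and ordinary HFPLs is already governed by the Littlewood--Richardson coefficient of Theorem~\ref{Thm:Bij_HFPL_Exc_0_KT} and Corollary~\ref{Cor:Enumeration_HFPL_Excess_0}, the task reduces to understanding where a single defect can be created and where it can be absorbed.

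First I would introduce deterministic \emph{sliding moves}, the hexagonal analogues of those in \cite[Section~6]{TFPL}, that transport the defect one unit toward the boundary while preserving the HFPL axioms and the edge orientations. The structural heart of the argument is to show that iterating these moves is well defined and terminates with the defect sitting on precisely one of the six boundary sides. On a top side ($\mathsf{l}_\mathsf{T}$, $\mathsf{t}$, $\mathsf{r}_\mathsf{T}$) the defect records an $01\to10$ modification of the boundary word---these words enter the excess with a negative sign, so raising their inversion number restores excess $0$---producing $\mathsf{l}_\mathsf{T}^+$, $\mathsf{t}^+$, $\mathsf{r}_\mathsf{T}^+$ in the sense of the relation $\omega\longrightarrow\omega^+$; on a bottom side ($\mathsf{r}_\mathsf{B}$, $\mathsf{b}$, $\mathsf{l}_\mathsf{B}$) it records a $10\to01$ modification, so the excess-$0$ background is registered against $\mathsf{r}_\mathsf{B}^-$, $\mathsf{b}^-$, $\mathsf{l}_\mathsf{B}^-$ with $\omega^-\longrightarrow\omega$. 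Deleting the defect then yields a bona fide excess-$0$ oriented HFPL whose boundary is exactly the one appearing in the corresponding Littlewood--Richardson coefficient, so Corollary~\ref{Cor:Enumeration_HFPL_Excess_0} counts the admissible backgrounds for each of the six families.

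The transport map is many-to-one, and the linear prefactors encode its fibers: for a fixed excess-$0$ background and a fixed side, the number of excess-$1$ configurations collapsing to it equals the number of admissible starting positions of the defect along its trajectory. These counts are expressed through the local statistics $L_i(\cdot,\cdot)$, $R_i(\cdot,\cdot)$ and the $\vert\cdot\vert_1$-values, yielding the integer coefficients of part~(1); part~(2) is obtained by carrying the weight $q^{t_{\circlearrowleft}(\overline f)-t_{\circlearrowright}(\overline f)}=q^{RL_b(f)-RL_t(f)+N^{\circlearrowleft}(f)-N^{\circlearrowright}(f)}$ of Proposition~\ref{Prop:InterpretationTurnsHFPLs} through every move. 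The factors $(q+q^{-1})$ appear because certain slides branch into two local continuations contributing exponents $+1$ and $-1$, whereas the isolated $\pm q$ terms record the net change of $RL_b-RL_t$ incurred when the defect is finally absorbed at a top or a bottom edge; assigning weight one to every configuration returns the unweighted count of part~(1).

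Part~(3) then follows by inserting the weighted enumeration of part~(2) into the inversion formula of Corollary~\ref{Cor:WeightedEnumeration_HFPL} and evaluating at a primitive sixth root of unity $\rho$ with $\rho+\rho^{-1}=1$. Because altering $\mathsf{t}\to\mathsf{t}'$ and $\mathsf{b}\to\mathsf{b}'$ along feasible words shifts the excess and interacts with the $(q+q^{-1})$- and $\pm q$-contributions, the matrix entries $(M_b(K+L-N)^{-1})_{\mathsf{b},\mathsf{b}'}$ and $(M_t(L)^{-1})_{\mathsf{t},\mathsf{t}'}$ reorganize the sum, and the identity $\rho+\rho^{-1}=1$ collapses the quadratic $q$-dependence to the integer coefficients of part~(3); this is the exact hexagonal counterpart of the final computation in \cite[Section~6]{TFPL}. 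I expect the principal obstacle to be the rigorous justification of the sliding moves in the hexagonal setting---especially their behaviour near the corners where two of the six boundary types meet and the local structure of $H^{K,L,M,N}$ changes---together with the precise determination of each fiber count; once these are established, the weight bookkeeping and the matrix-inversion simplification are routine generalizations of the triangular case.
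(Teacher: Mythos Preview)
Your proposal is correct and follows essentially the same approach as the paper. For parts~(1) and~(2) the paper likewise gives no details and defers entirely to the defect-transport argument of \cite[Section~6]{TFPL}, exactly as you describe. For part~(3) there is a minor difference worth noting: rather than invoking the full matrix-inversion formula of Corollary~\ref{Cor:WeightedEnumeration_HFPL}, the paper works directly with the forward relation~(\ref{Eq:Weighted_Relations_HFPL}) and observes that when the excess is $1$ only the terms $(\mathsf{t}',\mathsf{b}')\in\{(\mathsf{t},\mathsf{b}),(\mathsf{t}^+,\mathsf{b}),(\mathsf{t},\mathsf{b}^-)\}$ survive (any further change drives the excess negative), so one can solve for $\overline{h}(q)$ by hand and then evaluate at $q=\rho$ via Lemma~\ref{Lemma:HFPL_Weighted_Enumeration_Without_RL} together with part~(2). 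Your route through the matrices $M_b^{-1}$, $M_t^{-1}$ is equivalent in outcome but less direct; the paper's shortcut avoids having to identify the relevant inverse-matrix entries.
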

\begin{proof}
No details about how to derive (1) and (2) are given here because it can be done in the same way as it is done for TFPLs in \cite[Section 6]{TFPL}. Here, it will be focussed on the proof of the third part of the 
theorem. In the case when $exc(\mathsf{l}_\mathsf{T},\mathsf{t},\mathsf{r}_\mathsf{T};\mathsf{r}_\mathsf{B},\mathsf{b},\mathsf{l}_\mathsf{B})=1$, (\ref{Eq:Weighted_Relations_HFPL}) simplifies to
\begin{equation}
\overrightarrow{h}_{\mathsf{l}_\mathsf{T},\mathsf{t},\mathsf{r}_\mathsf{T}}^{\mathsf{r}_\mathsf{B},\mathsf{b},\mathsf{l}_\mathsf{B}}(q)= 
\overline{h}_{\mathsf{l}_\mathsf{T},\mathsf{t},\mathsf{r}_\mathsf{T}}^{\mathsf{r}_\mathsf{B},\mathsf{b},\mathsf{l}_\mathsf{B}}(q)+q^{-1}\sum\limits_{\mathsf{t}^+:\mathsf{t}\longrightarrow\mathsf{t}^+}
\overline{h}_{\mathsf{l}_\mathsf{T},\mathsf{t}^+,\mathsf{r}_\mathsf{T}}^{\mathsf{r}_\mathsf{B},\mathsf{b},\mathsf{l}_\mathsf{B}}(q)+q\sum\limits_{\mathsf{b}^-:\mathsf{b}^-\longrightarrow\mathsf{b}}
\overline{h}_{\mathsf{l}_\mathsf{T},\mathsf{t},\mathsf{r}_\mathsf{T}}^{\mathsf{r}_\mathsf{B},\mathsf{b}^-,\mathsf{l}_\mathsf{B}}(q).
\notag\end{equation}
In the case when $\mathsf{t}\longrightarrow\mathsf{t}^+$ and $\mathsf{b}^-\longrightarrow\mathsf{b}$, it has to hold $exc(\mathsf{l}_\mathsf{T},\mathsf{t}^+,\mathsf{r}_\mathsf{T};\mathsf{r}_\mathsf{B},\mathsf{b},\mathsf{l}_\mathsf{B})=
exc(\mathsf{l}_\mathsf{T},\mathsf{t},\mathsf{r}_\mathsf{T};\mathsf{r}_\mathsf{B},\mathsf{b}^-,\mathsf{l}_\mathsf{B})=0$.
Thus, $\overline{h}_{\mathsf{l}_\mathsf{T},\mathsf{t}^+,\mathsf{r}_\mathsf{T}}^{\mathsf{r}_\mathsf{B},\mathsf{b},\mathsf{l}_\mathsf{B}}(q)=
h_{\mathsf{l}_\mathsf{T},\mathsf{t}^+,\mathsf{r}_\mathsf{T}}^{\mathsf{r}_\mathsf{B},\mathsf{b},\mathsf{l}_\mathsf{B}}$
and 
 $\overline{h}_{\mathsf{l}_\mathsf{T},\mathsf{t},\mathsf{r}_\mathsf{T}}^{\mathsf{r}_\mathsf{B},\mathsf{b}^-,\mathsf{l}_\mathsf{B}}(q)=
h_{\mathsf{l}_\mathsf{T},\mathsf{t},\mathsf{r}_\mathsf{T}}^{\mathsf{r}_\mathsf{B},\mathsf{b}^-,\mathsf{l}_\mathsf{B}}$
by Proposition~\ref{Prop:Properties_HFPL_excess_0}.
Therefore,
\begin{align}
\overline{h}_{\mathsf{l}_\mathsf{T},\mathsf{t},\mathsf{r}_\mathsf{T}}^{\mathsf{r}_\mathsf{B},\mathsf{b},\mathsf{l}_\mathsf{B}}(q) = & 
\overrightarrow{h}_{\mathsf{l}_\mathsf{T},\mathsf{t},\mathsf{r}_\mathsf{T}}^{\mathsf{r}_\mathsf{B},\mathsf{b},\mathsf{l}_\mathsf{B}}(q)
-q^{-1}\sum\limits_{\mathsf{t}\longrightarrow\mathsf{t}^+}c_{\textbf{0}_{\vert\mathsf{l}_\mathsf{B}\vert_0}\,\textbf{1}_{\vert\mathsf{l}_\mathsf{B}\vert_1}\,\mathsf{l}_\mathsf{T}\,
\mathsf{t}^+,\textbf{0}_{\vert\mathsf{t}\vert_0}\,\textbf{1}_{\vert\mathsf{t}\vert_1}\,\mathsf{r}_\mathsf{T}\,\textbf{0}_{\vert\mathsf{r}_\mathsf{B}\vert_0}\,\textbf{1}_{\vert\mathsf{r}_\mathsf{B}\vert_1}}^{
\mathsf{l}_\mathsf{B}\,\mathsf{b}\,\mathsf{r}_\mathsf{B}}\notag\\
& - q\sum\limits_{\mathsf{b}^-\longrightarrow\mathsf{b}} c_{\textbf{0}_{\vert\mathsf{l}_\mathsf{B}\vert_0}\,\textbf{1}_{\vert\mathsf{l}_\mathsf{B}\vert_1}\,\mathsf{l}_\mathsf{T}\,\mathsf{t},
\textbf{0}_{\vert\mathsf{t}\vert_0}\,\textbf{1}_{\vert\mathsf{t}\vert_1}\,\mathsf{r}_\mathsf{T}\,\textbf{0}_{\vert\mathsf{r}_\mathsf{B}\vert_0}\,\textbf{1}_{\vert\mathsf{r}_\mathsf{B}\vert_1}}^{\mathsf{l}_\mathsf{B}\,
\mathsf{b}^-\mathsf{r}_\mathsf{B}}.
\notag\end{align}
By Lemma~\ref{Lemma:HFPL_Weighted_Enumeration_Without_RL} and by Theorem~\ref{Thm:Enumeration_Excess_1_HFPL}(2) identity (3) follows immediately.
\end{proof}

\bibliographystyle{alpha}
\bibliography{WielandTFPL}

\end{document}